\tikzset{
	mynode/.style={fill,circle,inner sep=1.5pt,outer sep=0pt}
}
\pgfplotsset{/pgf/number format/use comma,compat=newest}
\theoremstyle{plain}
\newtheorem{thm}{Theorem}[section]
\newtheorem{cor}[thm]{Corollary}
\newtheorem{lem}[thm]{Lemma}
\newtheorem{prop}[thm]{Proposition}
\theoremstyle{definition}
\newtheorem{defn}[thm]{Definition}
\newtheorem{rem}[thm]{Remark}
\newcommand{\R}{\mathbb{R}}
\newcommand{\N}{\mathbb{N}}
\newcommand{\Int}{\textnormal{Int}}
\newcommand{\dist}{\textnormal{dist}}
\newcommand{\diam}{\textnormal{diam}}
\newcommand{\sbvp}{\textnormal{SBV}^p}
\newcommand{\sbv}{\textnormal{SBV}}
\newcommand{\gsbv}{\textnormal{GSBV}}
\newcommand{\gsbvp}{\textnormal{GSBV}^p}
\newcommand {\Aff}{\textnormal{Aff}}
\definecolor{blue_links}{RGB}{13,0,180} 
\title[Brittle membranes in finite elasticity]{Brittle membranes in finite elasticity}
\author[S. Almi]{Stefano Almi}
\address[Stefano Almi]{Dipartimento di Matematica e Applicazioni ``R.~Caccioppoli'',
	Universit\`a di Napoli Federico II, via Cintia, 80126 Napoli, Italy.}
\email{stefano.almi@unina.it}
\author[D. Reggiani]{Dario Reggiani}
\address[Dario Reggiani]{Scuola Superiore Meridionale, Universit\'a di Napoli Federico II
	Largo San Marcellino 10, 80138 Napoli, Italy.}
\email{dario.reggiani@unina.it}
\author[F. Solombrino]{Francesco Solombrino}
\address[Francesco Solombrino]{Dipartimento di Matematica e Applicazioni ``R.~Caccioppoli'',
	Universit\`a di Napoli Federico II, via Cintia, 80126 Napoli, Italy.}
\email{francesco.solombrino@unina.it}
\begin{document}	
	\subjclass[2020]{49J45, 
		74K15, 
		74R10
	}
	
	\maketitle
	
	\begin{abstract}
		\noindent This work is devoted to the variational derivation of a reduced model for brittle membranes in finite elasticity. The main mathematical tools we develop for our analysis are: (i) a new density result in $GSBV^{p}$ of functions satisfying a maximal-rank constraint on the subgradients, which can be approximated by $C^{1}$-local immersions on regular subdomains of the cracked set, and (ii) the construction of recovery sequences by means of suitable $W^{1,\infty}$ diffeomorphisms mapping the regular subdomains onto the fractured configuration.
	\end{abstract}
	
	\section{Introduction}

	This paper focuses on the derivation of a reduced model for brittle elastic membranes under non-interpenetration constraints. Our approach is based on~$\Gamma$-convergence~\cite{DM93gamma}, which has been successfully used in the last decades to tackle a number of dimension reduction problems in elasticity for rods and ribbons~\cite{Cicalese2017-2, Cicalese2017, Davoli2013-2, Friedrich2022, Mora2003, Mora2004}, membranes~\cite{Hafsa2005TheNM, hafsa2006nonlinear, Anzellotti1994, Cesana, LeDret1995, LeDret1996, Trabelsimodeling}, and plates~\cite{Conti2009, FriedrichKruzik2020, Friesecke2002ATO, Friesecke2006, Hornung2014, Lewicka2011}, as well as in elastoplasticity~\cite{Davoli2014-2, Davoli2014, Davoli2013, Maggiani2018}.
	
	In the framework of variational fracture mechanics~\cite{Bourdin2008TheVA}, few dimension reduction results have been obtained in linearized~\cite{Almi2021, Almi2021-2, Babadjian2016, Freddi2010, Gladbach2022, Baldelli2014} and nonlinear elasticity~\cite{Babadjian2006QuasistaticEO, schmidt2016griffitheulerbernoulli}. In the mentioned papers an explicit non-interpenetration constraint in the form of positive sign of the Jacobian of the deformation was not considered in the membranes setting. In order to fill this gap, we propose here to study the limit as $\rho \to 0$ of the $3$-dimensional functional
	\begin{equation}\label{main functional}
		\mathcal{F}_\rho(v) :=
		\begin{cases*}
			\displaystyle \int_{\Sigma_\rho} W(\nabla v) \, dy +\mathcal{H}^2(J_v) & if $v \in \gsbvp(\Sigma_\rho,\R^3)$, \\
			+ \infty & otherwise in~$L^{0}(\Sigma_{\rho}, \R^{3})$.
		\end{cases*}
	\end{equation}
	defined on the thin reference configuration~$\Sigma_{\rho}:= \Sigma \times (-\frac{\rho}{2}, \frac{\rho}{2})$ of thickness $\rho>0$ and middle surface~$\Sigma$, an open bounded subset of~$\R^{2}$ with Lipschitz boundary~$\partial\Sigma$. In~\eqref{main functional}, $L^{0}(\Sigma_{\rho}, \R^{3})$ is the space of measurable functions on~$\Sigma_{\rho}$ with values in~$\R^{3}$ and $\gsbvp(\Sigma_{\rho}, \R^{3})$ for $p \in (1, +\infty)$ denotes the space of {\em generalized special functions of bounded variations} \cite[Section4.5]{Ambrosio2000FunctionsOB} whose approximate gradient~$\nabla{v}$ belongs to~$L^{p}(\Sigma_{\rho}, \mathbb{M}^{3 \times 3})$ and whose jump set~$J_{v}$ has finite $2$-dimensional Hausdorff measure~$\mathcal{H}^{2}(J_{v})$. The stored elastic energy density~$W \colon \mathbb{M}^{3 \times 3} \to [0, +\infty]$ is assumed to satisfy the following conditions: 
	\begin{itemize}
		\item[($H_1$)] $W\in C(\mathbb{M}^{3 \times 3}; [0, +\infty])$;
		\item[($H_2$)] $W$ is frame indifferent, that is
		$$
		W(F) = W(RF) \ \ \  \mbox{for every $F \in \mathbb{M}^{3 \times 3}$ and $R \in SO(3)$;}
		$$ 
		\item[($H_3$)] for every $F \in \mathbb{M}^{3 \times 3}$
		\begin{equation*}
			W(F) < +\infty \ \ \mbox{if }  \det F > 0, \ \mbox{and} \ W(F) = +\infty \ \ \mbox{if }  \det F \leq 0;
		\end{equation*}
		\item[($H_4$)] there exists $C_1>0$ such that for every $F \in \mathbb{M}^{3 \times 3}$
		\begin{align*}
			W(F) \geq C_1|F|^p - \frac{1}{C_1}
		\end{align*}
		\item[($H_5$)] for every $\delta>0$ there exists $c_\delta>0$ such that for every $F \in \mathbb{M}^{3 \times 3}$ with $\det F \geq \delta$
		\begin{align*}
			W(F) \leq c_\delta(1 + |F|^p).
		\end{align*}
	\end{itemize}
	As usual in finite elasticity, ($H_{3}$) forces the non-interpenetration constraint $\det \nabla v>0$ a.e.~in~$\Sigma_{\rho}$ for every $v \in GSBV^{p}(\Sigma_{\rho}, \R^{3})$ with~$\mathcal{F}_{\rho}(v)<+\infty$. Moreover, we notice that~($H_{3}$) is incompatible with a $p$-growth condition. Nevertheless, condition~($H_{5}$) ensures that $W$ satisfies a weak $p$-growth condition which degenerates as $\det F$ approaches $0$ (see also~\cite{Hafsa2005TheNM, hafsa2006nonlinear}). 
	
	In order to reduce ourselves to a fixed domain, we perform the change of variables $(y_1,y_2,y_3)=(x_1,x_2,\rho x_3)$ in \eqref{main functional} and define $u \in L^0(\Sigma_1,\R^3)$ as $u(x)=v(x_1,x_2,\rho x_3)$ for $v \in L^0(\Sigma_\rho,\R^3)$. This amounts to rewrite the functional $\mathcal{F}_\rho$ as
	\begin{equation}
		\label{e:Grho}
		\rho^{-1}\mathcal{G}_\rho(u) := 
		\begin{cases*}
			\displaystyle
			\int_{\Sigma_1} W \left(\nabla_\alpha u \left|  \frac{1}{\rho} \partial_3 u \right. \right) dx +\int_{J_u} \psi_{\rho}(\nu_{u}) \, d \mathcal{H}^2 & if $u \in \gsbvp(\Sigma_1,\R^3)$, \\[1mm]
			+ \infty & otherwise in $L^{0}(\Sigma_{1}, \R^{3})$,
		\end{cases*}
	\end{equation} 
	where $\nabla_{\alpha} u := (\partial_{1} u | \partial_{2} u) \in \mathbb{M}^{3 \times 2}$, $\nu_{u} \in \mathbb{S}^{2}$ denotes the approximate unit normal to~$J_{u}$, and
	\begin{displaymath}
		\psi_{\rho} (\nu) := \left| \left( \nu_{1} , \nu_{2}, \frac{1}{\rho} \nu_{3} \right)\right| \qquad \text{for every $\nu \in \R^{3}$ and every $\rho>0$.}
	\end{displaymath}
	
	The main difficulty in the identification of the $\Gamma$-limit of the sequence~$\rho^{-1} \mathcal{G}_{\rho}$ is indeed the lack of an uniform $p$-growth of the bulk energy density~$W$, which prevents us from directly applying classical representation theorems for free discontinuity functionals~\cite{Bouchitte2002, Braides1996}. 
	
	When fracture is not present, the $\Gamma$-limit of the elastic energy
	\begin{equation}
		\label{intro:elastic-part}
		\mathcal{I}_{\rho} (u) := 
		\int_{\Sigma_1} W \left(\nabla_\alpha u \left|  \frac{1}{\rho} \partial_3 u \right. \right)  dx  \qquad \text{for $u \in W^{1, p}(\Sigma_1,\R^3)$}
	\end{equation}
	has been identified in~\cite{Hafsa2005TheNM, hafsa2006nonlinear} and is given by
	\begin{equation*}
		\mathcal{I}_0(u) := \int_{\Sigma_1} \mathcal{Q} W_0(\nabla_\alpha u) \, dx  \qquad \text{for $u \in W^{1, p}(\Sigma_{1}, \R^{3})$ with $\partial_{3} u=0$,}
	\end{equation*}
	where the reduced elastic energy density $W_{0} \colon \mathbb{M}^{3 \times 2} \to [0,+\infty]$ is defined as
	\begin{equation}\label{reduced density}
		W_0(E):= \inf_{\xi \in \R^3}W(E|\xi) \qquad \mbox{for} \ E \in \mathbb{M}^{3 \times 2}
	\end{equation}
	and $ \mathcal{Q} W_0$ denotes the quasi-convex envelope of~$W_{0}$. The proof of such $\Gamma$-convergence is based on the density in Sobolev spaces of local immersions, i.e., of $C^{1}$-functions satisfying a maximum-rank condition on the Jacobian. Such density result builds upon a classical theorem by Gromov and Eliashberg (see~\cite{gromov1986partial, gromov1971construction}), which holds on regular domains. A first step towards the $\Gamma$-limsup inequality in~\cite{Hafsa2005TheNM, hafsa2006nonlinear} is indeed the characterization of the $\Gamma$-limit of~$\mathcal{I}_{\rho}$ as the relaxation of the functional
	\begin{displaymath}
		\int_{\Sigma} W_{0} (\nabla u) \, dx \qquad \text{for $u \in W^{1, p}(\Sigma, \R^{3})$ local immersion.}
	\end{displaymath}
	Then, a local construction of an optimizer (see~\cite[Lemma 5.4]{hafsa2006nonlinear} and~\cite{benbe1996}) allows one to estimate the $\Gamma$-limsup of~$\mathcal{I}_{\rho}$ with the functional
	\begin{displaymath}
		\int_{\Sigma} \mathcal{R}W_{0} (\nabla{u}) \, dx \qquad u \in W^{1, p}(\Sigma, \R^{3})\,,
	\end{displaymath}
	where $\mathcal{R} W_{0}$ indicates the rank-one convex envelope of~$W_{0}$. In particular, it turns out that~$\mathcal{R}W_{0}$ has $p$-growth (see~\cite[Lemma 5.2]{hafsa2006nonlinear} and Lemma~\ref{lemma rank one} below). Thus, the aforementioned density of local immersions and standard relaxation results in Sobolev spaces entail the desired $\Gamma$-convergence.
	
	Coming to brittle fracture, the presence of the jump set makes the identification of the $\Gamma$-limit of~$\rho^{-1}\mathcal{G}_{\rho}$ more involved. Our aim is to show that the sequence~$\rho^{-1}\mathcal{G}_{\rho}$ $\Gamma$-converges to 
	\begin{equation}\label{G0}
		\mathcal{G}_0(u) :=
		\begin{cases*}
			\displaystyle \int_{\Sigma_1} \mathcal{Q} W_0(\nabla_\alpha u) \, dx +\mathcal{H}^2(J_u) & if $u \in \mathcal{A}$, \\
			+ \infty & otherwise in $L^{0}(\Sigma_{1}, \R^{3})$,
		\end{cases*}
	\end{equation}
	where we have defined the set of limit deformations~$\mathcal{A}$ as
	$$
	\mathcal{A}:=\left\{ u \in \gsbvp(\Sigma_1,\R^3)| \ \mbox{$u$ is independent of $x_3$} \right\}.
	$$
	In particular, notice that~$\mathcal{G}_{0}$ is purely $2$-dimensional, as each deformation $u \in \mathcal{A}$ can be identified with its trace on~$\Sigma$, which we still denote by $u \in GSBV^{p}(\Sigma, \R^{3})$ throughout the paper.
	
	Although such convergence result may be rather expected, the strategy necessary to prove it requires a nontrivial adaption of the arguments of~\cite{Hafsa2005TheNM, HafsaMandallenaRelaxation, hafsa2006nonlinear}, as well as the introduction of new tools. As a first step in our proof we show the density in $GSBV^{p}(\Sigma, \R^{3})$ of piecewise affine functions with polyhedral jump set and approximate gradient with maximal rank (see Theorem~\ref{main result 1}). Such result, which can be extended to higher dimensions if $1 < p \leq 2$ (see Remarks~\ref{homom for n magg 2} and~\ref{r:3.4}), generalizes the approximation theorems of~\cite{Cortesani1997StrongAO, Cortesani1999ADR, Philippis2017OnTA}. The crucial point is to perform our approximation for $GSBV^{p}$-functions with polyhedral jump. In such scenario, the key idea, already used in~\cite{Philippis2017OnTA}, is that a domain with smooth cuts can be deformed into a Lipschitz domain through a bi-$W^{1, \infty}$-diffeomorphism arbitrarily close to the identity. This is clearly the best one can hope for, as bi-Lipschitz diffeomorphisms can not exist due to the low regularity of domains with cuts. We further remark that such diffeomorphisms can be made piecewise affine. On the Lipschitz domains we can apply density results in Sobolev spaces to obtain a piecewise affine approximation satisfying a maximum-rank constraint. In particular, since our limit deformations take values in a higher-dimensional space, the results in \cite{gromov1971construction} via discretization allow us to ensure that the constraint is also satisfied by the subgradients at vertices and interfaces. We refer to Theorem~\ref{main result 1} and Lemma~\ref{aff diffeo lemma} for more details.
	
	With the above density result at hand, the crucial step to prove $\Gamma$-convergence is to estimate the $\Gamma$-limsup of~$\rho^{-1} \mathcal{G}_{\rho}$ with the functional
	\begin{equation}\label{intro-G0}
		\mathcal{G}_0^w(u) :=  \int_{\Sigma_1} W_0(\nabla u) \, dx +\mathcal{H}^2(J_u)
	\end{equation}
	defined for functions~$u \in \mathcal{A}$ whose trace on~$\Sigma$ is piecewise affine, has polyhedral and essentially closed jump set, and has maximum-rank subgradients. We refer to~\eqref{e:G0-def} for the precise formulation of~$\mathcal{G}^{w}_{0}$ and to Theorem~\ref{gamma limsup prop} for the $\Gamma$-limsup estimate. To this aim, at first we approximate each deformation in the domain of~$\mathcal{G}^{w}_{0}$ with $C^{1}$-local immersions, but only on regular subdomains of the non-smooth set~$\Sigma \setminus \overline{J_{u}}$. This is shown in Lemma~\ref{trabelsi lemma modified}. 
	With this and a delicate adaption of the arguments of~\cite[Proposition 5.3]{hafsa2006nonlinear}, the desired estimate of the $\Gamma$-limsup by means of~$\mathcal{G}^{w}_{0}$ can be obtained. We remark that in order to define recovery sequences on the whole domain, we again use the diffeomorphisms introduced in Lemma~\ref{aff diffeo lemma}. In turn, this requires some refinements of the constructions in~\cite{Hafsa2005TheNM, hafsa2006nonlinear} (see Lemmas~\ref{lem interchange subset} and~\ref{selection}).
	
	The final part of the $\Gamma$-limit process follows quite closely the arguments of~\cite[Theorem~2.6]{hafsa2006nonlinear}, provided one replaces the density of local immersions with Lemma~\ref{main result 1} (see Theorem~\ref{gamma limit teo} and Proposition~\ref{ben belg lemma}).
	
	We stress the self-containedness of our paper, although some arguments are inspired by \cite{hafsa2006nonlinear} and adapted to a fractured domain. Indeed, many adaptions and careful controls of the involved constants are necessary in the limit procedure, which made a complete proofs presentation preferable. Therefore, no previous knowledge of the results contained in \cite{Hafsa2005TheNM,hafsa2006nonlinear,benbe1996,trabelsi,Trabelsimodeling} is needed.
	
	To the best of our knowledge, this is the first result in the framework of reduced models for brittle membranes in finite elasticity. We believe our result may pave the way to further reduced theories with different geometries, e.g. shells~\cite{Almi2021, CiarletIII}, or constraints, such as incompressibility~\cite{Conti2009, Trabelsimodeling} and weaker growth assumptions than~($H_{5}$).

	\section{Notations and Preliminaries}
	
	\subsection{Notations} For $n, k \in \N$, we denote by $\mathcal{L}^n$ the Lebesgue measure in $\R^n$ and by $\mathcal{H}^k$ the $k$-dimensional Hausdorff measure in $\R^n$. The symbol $\mathbb{M}^{n \times m}$ stands for the space of matrices with $n$ rows and $m$ columns with real coefficients and $I$ is the identity. For every $r > 0$ and every $x \in \R^n$, we denote by $B_r(x)$ the open
	ball in $\R^n$ of radius $r$ and center $x$. Given $\delta>0$ and $U \subset \R^n$ we set $(U)_\delta:=U+B_\delta(0)$. 
	
	Given a matrix $A \in \mathbb{M}^{3 \times 2}$ we denote $A=(A^1|A^2)$ where $A^1$ and $A^2$ are the columns of $A$. We also say that two matrices $A,B \in \mathbb{M}^{3 \times 2}$ are rank one connected or have a rank one connection if $\textnormal{rank}(A-B)=1$. We call $\textnormal{span}(A)$ the spanning of the columns of $A$ as vectors of $\R^3$. Finally, given two vectors $u,v \in \R^3$, we denote by $u \wedge v$ the usual vector product of $u$ and $v$. 

Let $f:\R^n \to \R^m$ with $m,n \geq 1$. For each point $x \in \R^n$ where $f$ is differentiable we call Jacobian of $f$ in $x$ the matrix $\nabla f(x) \in \mathbb{M}^{m \times n}$ such that 
	\footnote[1]{Notice that in the papers \cite{Hafsa2005TheNM, hafsa2006nonlinear}, which we will often refer to, the authors adopt a different convention for the Jacobian of a function, exchanging rows and columns.}
	$$
	[\nabla f(x)]_{i,j} = \frac{\partial f_i}{\partial x_j}(x) \ \ \ \mbox{for $i=1,\dots,m$ and $j=1,\dots n$}. 
	$$

	In the dimension reduction problem, we will need the definition of quasi-convex and rank one envelops. We recall here their definitions.

	\begin{defn}\label{q convex env}
		Let $f: \, \mathbb{M}^{m \times n} \to [0,+\infty]$ be a Borel measurable function. We define the quasiconvex envelope of $f$, denoted as $\mathcal{Q}f$, the function defined as
		\begin{equation*}
			\mathcal{Q}f(\xi)=\inf \left\{ \frac{1}{|B_1|} \int_{B_1} f(\xi+\nabla \varphi(x)) \, dx: \ \varphi \in W_0^{1,\infty}(B_1,\R^m) \right\} \ \ \mbox{for } \xi \in \mathbb{M}^{m \times n},
		\end{equation*}
		where $B_1 \subset \R^n$ is the unit ball.
	\end{defn}

	\begin{rem}\label{rem on qconv env}
		If $f: \, \mathbb{M}^{m \times n} \to [0,+\infty)$ is a Borel measurable function and locally bounded then, by \cite[Theorem 6.9]{DacorognaDirect}, for every $\xi \in \mathbb{M}^{m \times n}$ it holds
		\begin{equation*}
			\mathcal{Q}f(\xi)=\sup \left\{ h(\xi) : \, \mbox{$h: \mathbb{M}^{m \times n} \to \R$, $h \leq f$ and $h$ is quasiconvex} \right\}.
		\end{equation*}
	\end{rem}
	
	\begin{defn}\label{rkone}
		Let $f: \mathbb{M}^{m \times n} \to [0,+\infty]$ be a Borel measurable function. 
		\begin{enumerate}
			\item[(i)] We say that $f$ is rank one convex if for every $\lambda \in (0,1)$ and every $\xi,\xi' \in \mathbb{M}^{m \times n}$ with $\textnormal{rank}(\xi-\xi')=1$,
			$$
			f(\lambda \xi + (1-\lambda) \xi') \leq \lambda f(\xi)+(1-\lambda) f(\xi').
			$$
			\item[(ii)] By the rank one convex envelope of $f$, that we denote by $\mathcal{R}f$, we mean the
			greatest rank one convex function which is less than or equal to $f$.
		\end{enumerate}
	\end{defn}
	
	\subsection{Density Results in Sobolev Spaces}
	
	We begin by recalling some density results in Sobolev spaces (see e.g. \cite{adams2003sobolev}, Chapter 3).

	\begin{defn}[Piecewise affine functions]
		We say that a function $\psi$ on a domain $\Omega \subseteq \R^n$ is piecewise affine if and only if $\psi$ is continuous and there exists a finite family $\left\{ V_i \right\}_{i \in \mathcal{I}}$ of open disjoint subsets of $\Omega$ such that $\mathcal{L}^n(\Omega \setminus \cup_{i \in \mathcal{I}} V_i) = 0$ and for every $i \in \mathcal{I}$, the restriction of $\psi$ to $V_i$ is affine (notice that we can assume each $V_i$ to be the intersection of $\Omega$ with an $n$-dimensional simplex).
		Let us denote 
		\begin{equation*}
			\textnormal{Aff}(\Omega,\R^m):=\left\{ v \in C(\Omega,\R^m): \ \mbox{$v$ is piecewise affine} \right\},
		\end{equation*}
		and
		\begin{equation*}
			\textnormal{Aff}_c(\Omega,\R^m):=\left\{ v \in \Aff(\Omega,\R^m) : \ v \in C_c(\Omega,\R^m) \right\}
		\end{equation*}
	\end{defn}

	\begin{defn}\label{def of triangul}
		Let $\Omega$ be a bounded open subset of $\R^n$. Let $u \in \Aff(\Omega,\R^m)$. We call triangulation of $u$, and we denote it by $\mathcal{T}_u$, the collection of $T_i \cap \Omega$ where $T_i$ is an $n$-dimensional simplex such that $u|_{T_i \cap \Omega}$ is affine. We call diameter of the triangulation $\mathcal{T}$ the quantity $\max \{ \diam(T_i): \ T_i \cap \Omega \in \mathcal{T} \}$. We call vertex of the triangulation each point in $\Omega$ which is a vertex of some $T_i$. 
		
		In particular, if $n=2$, for every element $V_i$ of the triangulation, we call each closed segment of $\overline{\partial T_i \cap \Omega}$ a side of the triangulation. Finally, in this case, we call vertices of the triangulation of $u$ the endpoints of the sides of the triangulation.
	\end{defn}
	
	\begin{thm}\label{cont density}
		If $ \Omega \subset \R^n$ is an open set with Lipschitz boundary, then the set of restrictions to $\Omega$ of functions in $C_c^\infty(\R^n,\R^m)$ are dense in $W^{1,p}(\Omega,\R^m)$ for $1 \leq p<\infty$.
	\end{thm}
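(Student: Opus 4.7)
The plan is to combine three standard ingredients: extension to $\R^n$, truncation by a smooth cutoff, and mollification. Since the theorem is classical, I would only sketch the three steps and point out where the Lipschitz assumption enters.

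First, I would invoke the Calderón--Stein extension theorem: because $\partial \Omega$ is Lipschitz, there exists a bounded linear operator $E\colon W^{1,p}(\Omega,\R^m)\to W^{1,p}(\R^n,\R^m)$ with $(Eu)|_{\Omega}=u$ for every $u$. The construction proceeds via a finite partition of unity subordinate to a boundary covering that flattens $\partial \Omega$, followed by a reflection across the resulting hyperplane; this is the only step in which the Lipschitz regularity of $\partial \Omega$ is essential, and it is indeed the main (and sole) obstacle of the proof.

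Second, given $u\in W^{1,p}(\Omega,\R^m)$, for $R>0$ I would pick $\eta_R\in C_c^\infty(\R^n)$ with $\eta_R\equiv 1$ on $B_R(0)$, $\supp \eta_R \subset B_{R+1}(0)$, $0\le \eta_R\le 1$, and $|\nabla \eta_R|\le 2$. Then $\eta_R\cdot Eu \in W^{1,p}(\R^n,\R^m)$ has compact support, and dominated convergence applied to $|Eu|^p$ and $|\nabla Eu|^p$ yields $\eta_R\cdot Eu \to Eu$ in $W^{1,p}(\R^n,\R^m)$ as $R\to +\infty$.

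Third, letting $(\rho_\varepsilon)_{\varepsilon>0}$ be a standard family of mollifiers, the convolution $v_{R,\varepsilon}:=\rho_\varepsilon * (\eta_R\cdot Eu)$ belongs to $C_c^\infty(\R^n,\R^m)$, and from $\nabla v_{R,\varepsilon}=\rho_\varepsilon * \nabla(\eta_R\cdot Eu)$ together with the standard $L^p$-continuity of convolutions one obtains $v_{R,\varepsilon}\to \eta_R\cdot Eu$ in $W^{1,p}(\R^n,\R^m)$ as $\varepsilon\to 0^+$. A diagonal argument with $R_k\to+\infty$ and $\varepsilon_k\to 0^+$ produces a sequence $v_k\in C_c^\infty(\R^n,\R^m)$ such that $v_k|_{\Omega}\to u$ in $W^{1,p}(\Omega,\R^m)$, which concludes the proof.
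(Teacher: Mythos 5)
The paper itself offers no proof of Theorem~\ref{cont density}: it is recalled as a classical fact with a pointer to Adams' \emph{Sobolev Spaces}, Chapter~3. Your extension--truncate--mollify argument is correct, and it is a standard alternative to the proof in the cited source. Adams proves the density of restrictions of $C_c^\infty(\R^n)$ functions under the \emph{segment condition} (which Lipschitz domains satisfy), using a local translation argument rather than extension: one covers $\overline{\Omega}$ by finitely many open sets, uses a partition of unity, translates each localized piece a small distance along the segment direction so that its support moves into $\Omega$, and only then mollifies. That route never invokes an extension operator and applies to a strictly larger class of domains than the Lipschitz ones; yours is shorter once the Calder\'on--Stein extension theorem is available (take care to quote the Stein form, since Calder\'on's original argument excludes $p=1$), and it yields the existence of a bounded extension operator as a useful byproduct. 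Both are legitimate proofs of the statement as used in this paper, and the remaining two steps of your argument (cutoff and mollification) are exactly as one would write them.
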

	
	\begin{rem}\label{pwa density}
		It follows by Theorem \ref{cont density} that restrictions to $\Omega$ of functions in $\textnormal{Aff}_c(\R^n,\R^m)$ is dense in $W^{1,p}(\Omega,\R^m)$ when $\Omega$ is an open set with Lipschitz boundary.
	\end{rem}

	In order to obtain an approximation with locally injective functions we need the following definitions (see \cite[Section 2.6]{Clarke}).
	
	\begin{defn}[Clarke subdifferential]
		Let $f : \ \Omega \subseteq \R^n \to \R^m$ be a locally Lipschitz function defined on an open set $\Omega$ of $\R^n$. For every $x \in \overline{\Omega}$ we define the Clarke subdifferential $\partial f(x)$ of $f$ in $x$ as
 		$$
		 \partial f(x):=\textnormal{conv}\left(\left\{ \lim_{k \to +\infty} \nabla f(x_k): \ x_k \to x, \ \nabla f(x_k) \ \mbox{is well defined} \right\}\right),
		$$
		where conv denotes the convex hull.
	\end{defn}

	\begin{defn}
		Let $\Omega \subset \R^n$ be a bounded open set and let $n \leq m$. If $m>n$, we define
		\begin{align*}
			\Aff^*(\Omega,\R^m):=\left\{ v \in \Aff(\Omega,\R^m): \ \min \{\det(A^TA): \ A \in \partial v(x), \ x \in  \Omega \}>0 \right\}.
		\end{align*}
		Otherwise, if $n=m$,
		\begin{align*}
			\Aff^*(\Omega,\R^n):=\left\{ v \in \Aff(\Omega,\R^n): \ \det(\nabla v(x))>0 \ \ \mbox{for a.e. $x \in \Omega$} \right\}.
		\end{align*}
	\end{defn}

	\begin{rem}
		If $v \in \Aff^*(\Omega,\R^m)$ and $m>n$, then $v$ is locally injective in $\Omega$. Indeed, let $x_0 \in \Omega$ and take $\delta>0$ small such that $B_\delta(x_0)\subset \Omega$ and $\partial v(x) \subseteq \partial v(x_0)$ for every $x \in B_\delta(x_0)$. Given $y,z \in B_\delta(x_0)$ the segment connecting $y$ and $z$ intersects at most $K$ triangles of $\mathcal{T}_v$ denoted by $T_1,\dots,T_K$. Let us fix $x_k \in T_k$ for every $k=1,\dots,K$. We have 
		\begin{align*}
			v(z)-v(y) &= \int_0^{|z-y|} \nabla v \left( y+t\frac{z-y}{|z-y|} \right) \frac{z-y}{|z-y|} \, dt \\ & = \left(\sum_{k=1}^K \lambda_k \nabla v(x_k) \right) (z-y),
		\end{align*} 
		where $\lambda_k \geq 0$ and $\sum_k \lambda_k=1$. Therefore, $v(z)-v(y) =A(z-y)$ where $A \in \partial v(x_0)$. Since $v \in \Aff^*(\Omega,\R^m)$, this implies  $|v(z)-v(y)|\geq c|z-y|$ for some $c>0$ for every $y,z \in B_\delta(x_0)$.
	\end{rem}
	
	We present a density result due to Gromov and Eliashberg (see  \cite[Section 2.2.1]{gromov1986partial} and \cite{gromov1971construction}). A similar (actually stronger) result is proven, for the case $n=2$ and $m=3$, by Conti and Dolzmann in \cite[Proposition 4.1]{conti-dolzman}.
	
	\begin{thm}\label{thm Gromov}
		Let $1 \leq p<\infty$ and let $1 \leq n < m$ be two integers and let $M$ be a compact $n$-dimensional manifold which can be immersed in $\R^m$. Then, for each $\psi \in C^1(M;\R^m)$ there exists a sequence $\psi_j \in C^1(M,\R^m)$ such that $\psi_j$ is an immersion for every $j \geq 1$ and $\psi_j \to \psi$ in $W^{1,p}(M,\R^m)$.
	\end{thm}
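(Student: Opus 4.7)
The statement is the classical Gromov--Eliashberg immersion theorem, here formulated in the $W^{1,p}$-topology. Since $M$ is compact, $C^1$-convergence on $M$ implies convergence in $W^{1,p}(M,\R^m)$, so it suffices to establish the stronger density of $C^1$-immersions in $(C^1(M,\R^m), \|\cdot\|_{C^1})$. The plan is to invoke Gromov's $h$-principle for open ample differential relations.

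First, by a standard mollification argument through a finite coordinate atlas (with an accompanying $C^1$ partition of unity) I would reduce to approximating a $C^\infty$-map, since a diagonal selection then yields $C^1$-approximation of the original $C^1$-map $\psi$ by immersions. At this point the problem becomes: $C^1$-approximate a $C^\infty$-map by $C^1$-immersions.

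Second, I would encode the immersion condition as a differential relation $\mathcal{R} \subset J^1(M,\R^m)$, namely the open subset of $1$-jets whose linear part has full rank $n$. Because $m>n$, the complement of $\mathcal{R}$ in each fibre has codimension $m-n+1\ge 2$, so $\mathcal{R}$ is open and ample in Gromov's sense. The hypothesis that $M$ can be immersed in $\R^m$ guarantees that the space of formal solutions, i.e.\ continuous sections of $\mathcal{R}$ over $M$, is non-empty. I would then apply the $h$-principle (see \cite[Section 2.2.1]{gromov1986partial}): given $\psi\in C^\infty(M,\R^m)$, one first produces a homotopy of formal sections sending its $1$-jet into $\mathcal{R}$ (the target formal immersion being provided by the existence hypothesis), and then deforms the resulting formal solution into a holonomic one via convex integration, keeping the underlying map $C^0$-close to $\psi$ and its derivative $C^0$-close to the formal section, hence $C^1$-close to $\psi$.

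The hard part is the $h$-principle itself, which is established by iterated convex integration: one successively adds small-amplitude, high-frequency $C^1$-corrections in directions of $\R^m$ transverse to the current image of the differential, each iteration raising the rank by one while controlling the $C^1$-norm of the correction. Since $m>n$ there is always a transverse direction available at every point, and compactness of $M$ allows a uniform choice of amplitudes and frequencies. After at most $n$ iterations the full-rank condition is achieved globally, producing the desired $C^1$-immersion close to $\psi$, from which $W^{1,p}$-convergence follows by compactness of $M$.
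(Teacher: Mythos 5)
The paper does not prove Theorem~\ref{thm Gromov}: it is quoted as a classical result of Gromov and Eliashberg, with \cite{conti-dolzman} cited for an alternative elementary treatment of the case $n=2$, $m=3$ that is actually used. So there is no internal argument to compare against; the question is only whether your reconstruction is sound.

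There is a genuine gap in the final step. You reduce the claim to $C^1$-density of immersions and then assert that convex integration yields an immersion whose "derivative [is] $C^0$-close to the formal section, hence $C^1$-close to $\psi$." The "hence" does not follow: the formal section $F$ produced by homotoping $j^1\psi$ towards the given formal immersion is an arbitrary everywhere-injective bundle map and is \emph{not} $C^0$-close to $d\psi$, which is rank-deficient on $\Sigma(\psi):=\{x: \operatorname{rank} d\psi_x < n\}$. Even if $d\phi$ were $C^0$-close to $F$ (which convex integration does not give either — it superposes bounded-amplitude, high-frequency oscillations on the derivative, so the approximation of $F$ by $d\phi$ is only weak, e.g.\ in $L^p$), one would only get $\phi$ $C^0$-close to $\psi$, not $C^1$-close. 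Indeed, $C^1$-density of immersions is \emph{false} in the relevant range $n<m<2n$ (in particular for $n=2$, $m=3$): a generic smooth $\psi$ has transverse cross-cap singularities, i.e.\ isolated points where $d\psi\colon M\to\mathbb{M}^{m\times n}$ meets the codimension-$(m-n+1)$ variety of rank-deficient matrices transversally; such a transverse intersection is stable under $C^0$-small perturbations of $d\psi$, that is, under $C^1$-small perturbations of $\psi$, so no immersion can be $C^1$-close to $\psi$.

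What the Gromov--Eliashberg machinery actually provides, and what the theorem asserts, is $W^{1,p}$-density, and the mechanism is different from what you describe. After smoothing one may take $\psi$ generic, so that $\Sigma(\psi)$ has codimension $m-n+1\ge 2$ in $M$ and hence $\mathcal L^n$-measure zero. One chooses the formal section to coincide with $d\psi$ outside a small open neighborhood $U\supset\Sigma(\psi)$ (this is where $d\psi$ is already injective) and extends it across $U$ as a bundle monomorphism; the existence of \emph{some} global formal immersion, guaranteed by the immersibility hypothesis, is what makes this extension unobstructed. Running convex integration relative to $M\setminus U$ then gives an immersion $\phi$ with $\phi$ $C^0$-close to $\psi$, $d\phi=d\psi$ off $U$, and $|d\phi|$ uniformly bounded on $U$; hence $\|d\phi-d\psi\|_{L^p}\le C|U|^{1/p}$ is small. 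This yields the claimed $W^{1,p}$-convergence, and no more. Your sketch is missing exactly this localization step, and its conclusion of $C^1$-closeness is too strong to be true.
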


$C^1$-immersions can be approximated by locally injective functions in $\Aff^*$ by discretization, as we state below.
	
	\begin{prop}\label{affine inj}
		Let $n \leq m$, $R \subset \R^n$ be an open $n$-dimensional rectangle and $u \in C^1(\overline{R},\R^m)$. Then, the following facts hold: 
		\begin{itemize}
			\item[\textnormal{(i)}] for every $\sigma>0$ there exists $u_\sigma \in \Aff(R,\R^m)$ such that $\Vert u-u_\sigma \Vert_{W^{1,\infty}} \leq \sigma$ and $\partial u_\sigma(x) \subseteq B_\sigma(\nabla u(x))$ for every $x \in R$;
			\item[\textnormal{(ii)}] if $\textnormal{rank}(\nabla u)=n $ on $\overline{R}$, then $u_\sigma \in \Aff^*(R,\R^m)$ for $\sigma$ sufficiently small.
		\end{itemize}
	\end{prop}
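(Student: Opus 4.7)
The plan is to take $u_\sigma$ as the continuous piecewise affine interpolant of $u$ on a sufficiently fine and shape-regular simplicial decomposition of $\overline{R}$. Since $R$ is a rectangle, such a mesh $\mathcal{T}_h$ of diameter $h$ is easy to produce: subdivide $R$ into a uniform grid of $n$-dimensional boxes of diameter comparable to $h$ with sides parallel to the axes, then split each box via the Kuhn decomposition. All resulting simplices are affine images of a fixed reference simplex with bounded aspect ratio, so the shape-regularity constant $C_0$ of $\mathcal{T}_h$ can be taken independent of $h$. Let $u_\sigma$ be the unique function in $C(\overline R,\R^m)$ which is affine on each $T\in\mathcal{T}_h$ and equals $u$ at every vertex of $\mathcal{T}_h$; continuity across faces follows because two affine maps agreeing at the $n$ vertices of a common $(n-1)$-face coincide on the whole face, and therefore $u_\sigma\in\Aff(R,\R^m)$.

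For the gradient comparison, fix $T\in\mathcal{T}_h$ with vertices $v_0,\dots,v_n$, set $A_T:=\nabla u_\sigma|_{\mathring T}$, and let $\omega$ denote the modulus of continuity of $\nabla u$ on $\overline R$. From $A_T(v_i-v_0)=u(v_i)-u(v_0)=\int_0^1\nabla u(v_0+t(v_i-v_0))(v_i-v_0)\,dt$ one gets $|(A_T-\nabla u(v_0))(v_i-v_0)|\leq h\,\omega(h)$ for every $i$. Since $v_1-v_0,\dots,v_n-v_0$ form a basis whose condition number is bounded by $C_0$, shape regularity converts this into $|A_T-\nabla u(v_0)|\leq C_1\,\omega(h)$, and the triangle inequality then yields
\begin{equation*}
|A_T-\nabla u(x)|\leq (C_1+1)\,\omega(h)\qquad\text{for every }x\in\overline T,
\end{equation*}
with $C_1$ depending only on $n$ and $C_0$. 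Since $u-u_\sigma$ vanishes at the vertices of the convex simplex $T$, the mean value inequality also gives $\|u-u_\sigma\|_{L^\infty(T)}\leq h\,(C_1+1)\omega(h)$. Choosing $h$ small enough that $(1+h)(C_1+1)\omega(h)\leq\sigma$ produces $\|u-u_\sigma\|_{W^{1,\infty}}\leq\sigma$ and, by convexity of $B_\sigma(\nabla u(x))$, the inclusion $\partial u_\sigma(x)\subseteq B_\sigma(\nabla u(x))$ at every $x\in R$, because any element of that subdifferential is a convex combination of the finitely many constant gradients $A_T$ with $x\in\overline T$. This proves~(i).

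Part~(ii) is then a perturbation argument. The assumption $\textnormal{rank}(\nabla u)=n$ on the compact set $\overline R$ makes the continuous map $x\mapsto\det((\nabla u(x))^T\nabla u(x))$ bounded below by some $c>0$ (when $m=n$, one additionally observes that $\det\nabla u$ has constant sign on the connected set $\overline R$, which we may take positive so as to match the definition of $\Aff^*$). Uniform continuity of $A\mapsto\det(A^TA)$ on bounded sets gives $\sigma_0>0$ such that $\det(A^TA)\geq c/2$ for every $A\in B_{\sigma_0}(\nabla u(x))$ and every $x\in\overline R$; applying~(i) with $\sigma\leq\sigma_0$ then forces every element of $\partial u_\sigma(x)$, respectively the a.e.\ value of $\det\nabla u_\sigma$ when $m=n$, to satisfy the required positivity. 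Hence $u_\sigma\in\Aff^*(R,\R^m)$. The main delicate point throughout is that the inclusion $\partial u_\sigma(x)\subseteq B_\sigma(\nabla u(x))$ must hold pointwise everywhere and not merely at interior points of the simplices; controlling it uniformly on the whole closure of each $T$ is exactly what shape-regularity of the Kuhn triangulation delivers.
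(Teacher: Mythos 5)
Your proof is correct and takes essentially the same approach as the paper: the paper simply asserts that (i) follows ``by a standard discretization argument'' and then performs the same perturbation argument for (ii), while you have supplied the details of that discretization (Kuhn triangulation of a uniform grid, vertex interpolation, shape-regularity to control the Clarke subdifferential). One small remark: your aside about the sign of $\det\nabla u$ in the case $m=n$ is actually slightly more careful than the paper, which only estimates $\det(A^TA)$ and therefore tacitly assumes the orientation is positive.
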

       
	\begin{proof}
		Item (i) follows by a standard discretization argument, using the fact that $u \in C^1(\overline{R},\R^m)$.
		
		As for item (ii), let us fix $\eta>0$ such that $\det((\nabla u(x))^T(\nabla u(x)))\geq \eta$. It follows by (i) and continuity of determinant that for $\sigma$ small we have
		$$
		\det(A^TA) \geq \frac{\eta}{2} \ \ \ \mbox{for every $A \in \partial u_\sigma(x)$ and every $x \in R$}.
		$$
		
	\end{proof}

		Using Theorem \ref{thm Gromov} (if $m>n$) and Proposition \ref{affine inj} we can actually show a stronger density result than the one stated in Remark \ref{pwa density} which involves locally injective functions.
		
		\begin{cor}
			Let $1 \leq p <\infty$, $n \leq m$ and $\Omega \subset \R^n$ be an open bounded set with Lipschitz boundary. For every $u \in W^{1,p}(\Omega,\R^m)$ there exist a rectangle $R \Supset \Omega$ in $\R^n$ and a sequence $u_j \in \Aff^*(R,\R^m)$ such that $u_j|_\Omega \to u$ in $W^{1,p}(\Omega,\R^m)$.
		\end{cor}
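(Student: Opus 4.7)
The plan is to compose three successive approximations: first by a smooth function on a slightly larger rectangle, then (when $m>n$) by a $C^1$-immersion via Theorem \ref{thm Gromov}, and finally by a piecewise affine map in $\Aff^*$ via Proposition \ref{affine inj}. A standard diagonal extraction will produce the desired sequence.

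First I would fix rectangles $\Omega \Subset R' \Subset R$ in $\R^n$ and extend $u$ to a function $\tilde{u} \in W^{1,p}(R,\R^m)$ using the Sobolev extension theorem, available since $\partial\Omega$ is Lipschitz. By Theorem \ref{cont density} applied to $\tilde u$ on $R'$, I obtain $v_k \in C^\infty(\overline{R'},\R^m)$ with $v_k \to \tilde{u}$ in $W^{1,p}(R',\R^m)$, so in particular $v_k|_\Omega \to u$ in $W^{1,p}(\Omega,\R^m)$. When $m>n$, I view $\overline{R'}$ as a compact $n$-manifold (with corners, which can be smoothed or embedded into a slightly larger smooth $n$-manifold that clearly immerses into $\R^m$) and apply Theorem \ref{thm Gromov} to each $v_k$ to produce a $C^1$-immersion $w_k \in C^1(\overline{R'},\R^m)$ with $\Vert w_k - v_k\Vert_{W^{1,p}(R')}$ arbitrarily small. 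Since $w_k$ has Jacobian of rank $n$ on the compact set $\overline{R'}$, Proposition \ref{affine inj}(ii) produces, for every sufficiently small $\sigma_k>0$, a map $u_k \in \Aff^*(R',\R^m)$ with $\Vert u_k - w_k\Vert_{W^{1,\infty}(R')} \leq \sigma_k$. Combining these three estimates via the triangle inequality and choosing $\sigma_k$ and the indices of $v_k$, $w_k$ along a diagonal sequence yields $u_k|_\Omega \to u$ in $W^{1,p}(\Omega,\R^m)$, with $R:=R'$.

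The case $n=m$, which I expect to be the main obstacle, requires a separate treatment because Theorem \ref{thm Gromov} is no longer available: generic smooth functions (e.g.\ constant maps) cannot be approximated by local diffeomorphisms in $W^{1,p}$. In this case, however, the condition imposed by $\Aff^*$ is only that $\det \nabla v>0$ almost everywhere, i.e.\ on each open simplex of the triangulation. The strategy I would adopt is to apply Proposition \ref{affine inj}(i) to a mollification of $u$ and then perform a sign-correction on each simplex where the constant Jacobian has non-positive determinant, by slightly perturbing the vertex values of $u_\sigma$ (thus preserving continuity) while keeping the $W^{1,p}$-error controlled, and exploiting that for generic vertex perturbations of fixed magnitude the Jacobian on each simplex depends linearly on the perturbation and can be driven to have positive determinant. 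Making this correction rigorous and uniform across all simplices is the principal technical point of the $n=m$ case; it is however not needed in the main application of the paper, where $n=2$ and $m=3$.
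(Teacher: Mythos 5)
For the case $m>n$ your proof follows the same route as the paper: smooth approximation (Theorem \ref{cont density}), then Theorem \ref{thm Gromov} to obtain $C^1$-immersions, then Proposition \ref{affine inj} to discretize. This part is correct and essentially identical to the paper's argument (the paper applies the chain to a single smooth $u \in C_c^\infty$ and then concludes by density; you diagonalize, which is fine).

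For the case $n=m$, however, your proposed sign-correction cannot be made to work, and the obstruction is structural rather than merely technical. Suppose $u_j \in \Aff^*(R,\R^n)$ and $u_j|_\Omega \to u$ in $W^{1,p}(\Omega,\R^n)$. Then $\nabla u_j \to \nabla u$ in $L^p(\Omega)$, hence (along a subsequence) a.e.\ in $\Omega$; by continuity of the determinant, $\det \nabla u_j \to \det \nabla u$ a.e., and since $\det \nabla u_j>0$ a.e.\ this forces $\det\nabla u \geq 0$ a.e. In particular, a perturbation of $u_\sigma$ whose $W^{1,p}$-distance to $u$ stays small can never flip the sign of the determinant on a set of positive measure where $\det\nabla u_\sigma<0$. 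Conversely, the vertex perturbations you describe, ``of fixed magnitude'' relative to the mesh, alter the gradient by an amount of order one on a set of full measure, which destroys the $W^{1,p}$-control you need. So the two requirements---small $W^{1,p}$-error and corrected determinant sign---are mutually exclusive under a vertex-perturbation scheme. The paper circumvents this by invoking a genuinely different ``classical topological argument'' (Trabelsi, Proposition 3.1.6) applied to the piecewise affine approximant on the rectangle $R$; whatever that construction does, it is not a small perturbation of the approximant's vertices, and that step is precisely the content you would need to supply. Your observation that the $n=m$ case is not used in the paper's main application ($n=2$, $m=3$) is correct, but the sketch you give for it should be replaced, not just tightened.
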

		
		\begin{proof} 
			
		If $n=m$, by Remark \ref{pwa density} we find $v \in \Aff_c(\R^n,\R^m)$ such that the restriction of $v$ in $\Omega$ approximates $u$ in the $W^{1,p}(\Omega,\R^m)$ norm. Consider $R \Supset \Omega$ a rectangle in $\R^n$. We can use a classical topological argument applied to the restriction of $v$ in $R$ (see e.g. \cite[Proposition 3.1.6]{trabelsi}) in order to obtain $w \in \Aff^*(R,\R^m)$. Thus allowing us to conclude in the case $m=n$.
		
		If $m>n$, we may first consider $u \in C_c^\infty(\R^n,\R^m)$. Consider $R \Supset \Omega$ a rectangle in $\R^n$ and apply Theorem \ref{thm Gromov} to the restriction of $u$ to $\overline{R}$ to construct a sequence of immersions $\phi_j \in C^1(\overline{R},\R^m)$ such that $\phi_j|_\Omega \to u$ in $W^{1,p}(\Omega,\R^m)$. To conclude it is enough to apply Proposition \ref{affine inj} to each $\phi_j$. The general case follows by density using Theorem \ref{cont density}.
		
		\end{proof}

	\subsection{Density in GSBV}
	
	We now briefly recall some basic definitions and results in the space $\gsbv$. We refer also to \cite[Section 4.5]{Ambrosio2000FunctionsOB} for more details on this topic.
	
	For $\Omega \subseteq \R^n$ open, let $x \in \Omega$ and $v: \Omega \to \R^m$ be an $\mathcal{L}^n$-measurable function such that 
	$$
	\limsup_{r \searrow 0} \frac{\mathcal{L}^n(\Omega \cap B(x,r))}{r^n}>0,
	$$
	we say that $a \in \R^m$ is the approximate limit of $v$ at $x$ if
	$$
	\lim_{r \searrow 0} \frac{\mathcal{L}^n(\Omega \cap B(x,r) \cap \left\{ |a-x|>\varepsilon \right\})}{r^n}=0 \ \ \ \mbox{for every $\varepsilon>0$}. 
	$$
	In that case we write 
	$$
	\textnormal{ap-}\lim_{y \to x} v(y)=a.
	$$
	We say that $x \in \Omega$ is an approximate jump point of $v$, and we write $x \in J_v$, if there exists $a, b \in \R^m$ with $a \neq b$ and $\nu \in \mathbb{S}^{n-1}$ such that
	$$
	\underset{\underset{(y-x) \cdot \nu>0}{y \to x}}{\textnormal{ap-}\lim} v(y)=a \ \ \ \ \mbox{and} \ \ \ \ \underset{\underset{(y-x) \cdot \nu<0}{y \to x}}{\textnormal{ap-}\lim} v(y)=b.
	$$
	In particular, for every $x \in J_v$ the triple $(a, b, \nu)$ is uniquely determined up to a change of sign of $\nu$ and a permutation of $a$ and $b$. We indicate such triple by $(v^+(x), v^-(x), \nu_v(x))$. The jump of $v$ at $x \in J_v$ is defined as $[v](x)=v^+(x)-v^-(x)$. 
	
	The space $\textnormal{BV}(\Omega,\R^m)$ of functions of bounded variation is the set of $u \in L^1(\Omega;\R^m)$ whose distributional gradient $Du$ is a bounded Radon measure on $\Omega$ with values in $\mathbb{M}^{m \times n}$. Given $u \in \textnormal{BV}(\Omega,\R^m)$ we can write $Du=D^a u+ D^s u$, where $D^a u$ is absolutely continuous and $D^s u$ is singular w.r.t. $\mathcal{L}^n$. The set $J_u$ is countably rectifiable and has approximate unit normal vector $\nu_u$, while the density $\nabla u \in L^1(\Omega,\mathbb{M}^{m \times n})$ of $D^a u$ w.r.t. $\mathcal{L}^n$ coincides a.e. in $\Omega$ with the approximate gradient of $u$. That is, for a.e. $x \in \Omega$ it holds
	$$
	\underset{y \to x}{\textnormal{ap-}\lim} \frac{u(y)-u(x)-\nabla u(x)\cdot (y-x)}{|x-y|}=0.
	$$
	
	The space $\sbv(\Omega,\R^m)$ of special functions of bounded variation is defined as
	the set of all $u \in \textnormal{BV}(\Omega,\R^m)$ such that $|D^s u|(\Omega \setminus J_u)=0$. Moreover, we denote by $\sbv_{\textnormal{loc}}(\Omega,\R^m)$ the space of functions belonging to $\sbv(U,\R^m)$ for every $U \Subset \Omega$. For $p \in [1,+\infty)$, $\sbvp(\Omega,\R^m)$ stands for the set of functions $u \in \sbv(\Omega,\R^m)$, with approximate gradient $\nabla u \in L^p(\Omega,\mathbb{M}^{m \times n})$ and $\mathcal{H}^{n-1}(J_u)<+\infty$.
	
	We say that $u \in \gsbv(\Omega,\R^m)$ if $\varphi(u) \in \sbv_{\textnormal{loc}}(\Omega,\R^m)$ for every $\varphi \in C^1(\R^m,\R^m)$ whose gradient has compact support. Also for $u \in \gsbv(\Omega,\R^m)$ the approximate gradient $\nabla u$ exists $\mathcal{L}^n$-a.e. in $\Omega$ and the jump set $J_u$ is countably $\mathcal{H}^{N-1}$-rectifiable with approximate unit normal vector $\nu_u$. Finally, for $p \in [1,+\infty)$, we define $\gsbvp(\Omega,\R^m)$ as the set of functions $u \in \gsbv(\Omega,\R^m)$, with approximate gradient $\nabla u \in L^p(\Omega,\mathbb{M}^{m \times n})$ and $\mathcal{H}^{n-1}(J_u)<+\infty$. \\
	
	\begin{defn}
		We denote by $\mathcal{W}(\Omega;\R^m)$ the space of all functions $u \in \sbv (\Omega;\R^m)$ with the following properties:
		\begin{enumerate}
			\item[(i)] $u \in W^{1,\infty}(\Omega \setminus \overline{J_{u}},\R^m)$,
			\item[(ii)] $J_u$ is essentially closed, i.e., $\mathcal{H}^{n-1}(\overline{J_u} \setminus J_u) = 0$,
			\item[(iii)] $J_u$ is composed by the intersection of $\Omega$ with the finite union of ($n-1$)-dimensional polyhedra.
		\end{enumerate}
	\end{defn}

	Let us state a classic approximation result for $\gsbvp$ functions, obtained in \cite{Cortesani1997StrongAO} and subsequentially refined in \cite{Cortesani1999ADR}.
	
	\begin{thm}[Cortesani-Toader] \label{ct_approx}
		Let $\Omega \subset \R^n$ be an open bounded set with Lipschitz boundary. Let $u \in \gsbvp(\Omega,\R^m)$ with $p>1$. Then, there exists a sequence of functions $u_j \in \mathcal{W}(\Omega,\R^m)\cap C^\infty(\Omega \setminus \overline{J_{u_j}},\R^m)$ such that $u_j \to u$ in measure, $\nabla u_j \to \nabla u$ in $L^p(\Omega,\mathbb{M}^{m \times n})$ and 
		\begin{equation} \label{limsup}
			\limsup_{j \to \infty} \int_{J_{u_j}\cap \overline{A}} g(x,u_j^+,u_j^-,\nu_{u_j}) \,d\mathcal{H}^{n-1} \leq \int_{J_{u}\cap \overline{A}} g(x,u^+,u^-,\nu_{u}) \,d\mathcal{H}^{n-1}
		\end{equation}
		for every $A \subset \subset \Omega$ open and every upper semicontinuous function $g: \Omega \times \R^m \times \R^m \times \mathbb{S}^{n-1} \to [0,\infty)$ such that $g(x,a,b,\nu)=g(x,a,b,-\nu)$ for all $x \in \Omega$, $a,  b \in \R^m$, $\nu \in \mathbb{S}^{n-1}$. If $u \in L^\infty(\Omega)$, we may further assume that $\Vert u_j \Vert_{L^\infty} \leq \Vert u \Vert_{L^\infty}$ for every $j \geq 1$. 
	\end{thm}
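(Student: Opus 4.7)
The plan is to follow the classical Cortesani--Toader strategy, which naturally splits into four stages: a truncation reduction, a polyhedral replacement of the jump set, a careful Sobolev-type smoothing on each side of the replaced jump, and a diagonal procedure delivering the limsup of the surface integral. As a preliminary step I would reduce to the case $u \in L^\infty(\Omega,\R^m)$: taking the cutoffs $\varphi_k(u)$, with $\varphi_k \in C^1(\R^m,\R^m)$ equal to the identity on $\{|y|\le k\}$ and with globally bounded derivative, one obtains a sequence in $\sbvp(\Omega,\R^m)\cap L^\infty$ which converges to $u$ in measure, has $\nabla(\varphi_k(u))\to \nabla u$ in $L^p(\Omega,\mathbb{M}^{m\times n})$, and satisfies $J_{\varphi_k(u)} \subseteq J_u$ with $(\varphi_k(u))^\pm \to u^\pm$ at $\mathcal{H}^{n-1}$-a.e.\ point. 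A diagonal extraction at the very end recovers the statement for $u$.

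Once $u \in \sbvp(\Omega,\R^m)\cap L^\infty$, I would exploit the countable $\mathcal{H}^{n-1}$-rectifiability of $J_u$ to approximate it from above by a polyhedral set. By the blow-up theorem for rectifiable sets, at $\mathcal{H}^{n-1}$-a.e.\ $x \in J_u$ the rescaled sets $r^{-1}(J_u-x)$ converge to the hyperplane $\Pi_x$ normal to $\nu_u(x)$, and $x$ is a two-sided Lebesgue point for the $\sbv$-traces $u^\pm$. A disjoint finite Besicovitch-type covering by small balls $B_{r_i}(x_i)$ centered at such good points, chosen so that $\mathcal{H}^{n-1}\bigl(J_u \setminus \bigcup_i B_{r_i}(x_i)\bigr)<\varepsilon_j$, allows me to replace $J_u \cap B_{r_i}(x_i)$ by the flat disk $\Pi_i\cap B_{r_i}(x_i)$, producing the polyhedral candidate $K_j$ for $J_{u_j}$ whose surface measure differs from that of $J_u$ by at most $\varepsilon_j$.

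The central step is the smooth approximation off $K_j$. Outside $\bigcup_i B_{r_i}(x_i)$ the function $u$ is, modulo a modification on a set of small measure, a Sobolev function, and can be smoothed using Theorem \ref{cont density}. Inside each ball, after a rigid change of coordinates that sends $\Pi_i$ to $\{x_n=0\}$, the two restrictions of $u$ to the upper and lower half-balls are $W^{1,p}$ functions; I would extend each of them by reflection, mollify each extension with a kernel supported in the appropriate half-space, and patch the two pieces together across $\Pi_i$. Matching is ensured because the traces of these one-sided mollifications on $\Pi_i$ converge in $L^p$ to $u^\pm$, with pointwise convergence at the chosen Lebesgue points. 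A Whitney-type partition of unity subordinated to the balls then glues the inner and outer constructions so that the resulting $u_j$ is smooth in $\Omega\setminus \overline{K_j}$, $W^{1,\infty}$ there, and satisfies $J_{u_j}\subseteq K_j$ essentially closed; by construction $\nabla u_j \to \nabla u$ in $L^p$.

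For the surface-energy limsup, I would exploit that on each disk $\Pi_i\cap B_{r_i}(x_i)$ the traces $u_j^\pm$ converge in $L^1$ to the constant values $u^\pm(x_i)$ and the normal $\nu_{u_j}$ is identically $\nu_u(x_i)$; upper semicontinuity and evenness of $g$, combined with a Fatou-type estimate on each ball, summation over the cover, and the passage $\varepsilon_j \to 0$, yield \eqref{limsup} for every $A\Subset \Omega$. The \emph{main obstacle} is the third step: the one-sided mollifications and the gluing across $\partial B_{r_i}(x_i)$ have to be tuned so that no spurious jumps are created outside $K_j$, the traces on $\Pi_i$ do not oscillate, and the $L^p$-convergence of the approximate gradients is preserved on the whole of $\Omega$. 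Balancing the mollification scale against the radii $r_i$ through the partition of unity is exactly what makes the original arguments of \cite{Cortesani1997StrongAO,Cortesani1999ADR} technically delicate, and is the reason we prefer to quote the result rather than reproduce its full proof here.
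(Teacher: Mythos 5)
The paper does not prove this theorem; it is quoted verbatim from Cortesani \cite{Cortesani1997StrongAO} and Cortesani--Toader \cite{Cortesani1999ADR} and used as a black box. So there is no in-paper proof to compare against. As an outline of the published argument, your sketch captures the right overall architecture (truncation, rectifiability and covering, one-sided smoothing, diagonalization), but two points as written would not deliver the statement in the precise form the paper needs.

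First, replacing $J_u\cap B_{r_i}(x_i)$ by the flat \emph{disk} $\Pi_i\cap B_{r_i}(x_i)$ does not produce a polyhedral jump set: a round $(n-1)$-disk is not a finite union of $(n-1)$-dimensional polyhedra, so the resulting $u_j$ would not lie in $\mathcal{W}(\Omega,\R^m)$ as defined. The original construction works with cubes (with faces aligned to the blow-up normal) rather than balls precisely to make the replacement pieces polyhedral, and then shows that the approximants' jump is contained in a finite union of $(n-1)$-faces. Second, the membership $u_j\in W^{1,\infty}(\Omega\setminus\overline{J_{u_j}},\R^m)$ is not automatic from smoothing: a fixed mollification scale gives a global Lipschitz bound, but to get $\nabla u_j\to\nabla u$ in $L^p$ the scales must shrink, and the Lipschitz bound then only survives because the mollification parameter is kept uniformly bounded below on each compact subset of $\Omega\setminus\overline{K_j}$ while being allowed to vanish only up to the polyhedral set $K_j$. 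Your "balance the mollification scale against $r_i$" remark gestures at this, but the $W^{1,\infty}$ conclusion is one of the nontrivial consequences of that tuning, not a by-product of reflection-and-mollify. A reflection extension across $\Pi_i$ followed by independent one-sided mollifications also generically creates a mismatch of traces on $\partial B_{r_i}(x_i)$ away from $\Pi_i$; Cortesani--Toader avoid this by constructing the approximant from a single Whitney-type regularization of $u$ whose scale degenerates only on $K_j$, rather than by patching two reflected mollifications. Finally, to obtain \eqref{limsup} on $\overline{A}$ one needs the extra care of Remark~3.2 of \cite{Cortesani1999ADR} near $\partial\Omega$, which your sketch does not address. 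These gaps are fixable by following the original construction, but as written the sketch would not establish that $u_j\in\mathcal{W}(\Omega,\R^m)\cap C^\infty(\Omega\setminus\overline{J_{u_j}},\R^m)$.
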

	
	\begin{rem}\label{ct remark}
		By \cite[Remark 3.2]{Cortesani1999ADR}, if $g$ is locally bounded near $\partial \Omega$ we can choose the sequence $\left\{ u_j \right\}_j$ in such a way that (\ref{limsup}) holds for every $A \subseteq \Omega$, in this case $\overline{A}$ must be replaced by the relative closure of $A$ in $\Omega$.
	\end{rem}
	
	\begin{rem}\label{cap simplexes}
		Using \cite[Lemma 5.2]{Philippis2017OnTA} we can always assume that $J_{u_j} \Subset  \Omega$ for each $j \geq 1$, where $\left\{u_j \right\}_j$ is the sequence of approximants given in Theorem \ref{ct_approx}.
		Moreover, if $1<p \leq 2$, the capacitary argument of \cite[Corollary 3.11]{Cortesani1997StrongAO} holds. Thus, we can additionally assume that the jump set of the approximants is composed by the finite union of disjoint ($n-1$)-dimensional simplexes.
	\end{rem}

	We finally present a result concerning the integral representation of functionals defined on $\gsbvp$. More details can be found in \cite[Theorem 3.5 and Theorem 3.8]{Cagnetti2019}.
	
	\begin{thm}\label{relaxation gsbvp}
		Let $\Omega \subset \R^n$ open, let $f: \mathbb{M}^{m \times n} \to \R_+$ be a Borel function, and assume that there exist $c,C>0$ and $p \in (1,\infty)$ such that 
		$$c|M|^p-\frac{1}{c} \leq f(M) \leq C(1+|M|^p) \ \ \ \mbox{for all $M \in \mathbb{M}^{m \times n}$.}$$
		Then the lower semicontinuous envelope of the functional 
		$$
		\int_{\Sigma} f(\nabla u) \, dx + \mathcal{H}^{n-1}(J_u) \ \ \ u \in \gsbvp(\Omega,\R^m)
		$$
		with respect to the convergence in measure is given by
		$$
		\int_{\Omega} \mathcal{Q}f(\nabla u) \, dx + \mathcal{H}^{n-1}(J_u) \ \ \ u \in \gsbvp(\Omega,\R^m),
		$$
		where $\mathcal{Q}f$ is the quasi convex envelope of $f$.
	\end{thm}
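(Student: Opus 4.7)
The plan is to establish the two inequalities separately. Write $\mathcal{F}(u)$ for the lower semicontinuous envelope in question and set $\mathcal{G}(u) := \int_\Omega \mathcal{Q}f(\nabla u)\,dx + \mathcal{H}^{n-1}(J_u)$. I first note that the quasiconvex envelope preserves the two-sided $p$-growth, so $\mathcal{Q}f$ satisfies the same hypotheses as $f$ (with different constants), and in particular $\mathcal{Q}f$ is continuous.

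For the liminf inequality $\mathcal{F} \geq \mathcal{G}$, I would take any sequence $u_j \to u$ in measure with uniformly bounded bulk and surface energies. The coercivity of $f$ forces $\nabla u_j$ to be bounded in $L^p$, so Ambrosio's compactness and closure theorem in $\gsbvp$ yields, up to subsequence, that $\nabla u_j \rightharpoonup \nabla u$ weakly in $L^p(\Omega,\mathbb{M}^{m\times n})$ and $\mathcal{H}^{n-1}(J_u) \leq \liminf_j \mathcal{H}^{n-1}(J_{u_j})$. Since $f \geq \mathcal{Q}f$ and $\mathcal{Q}f$ is quasiconvex with $p$-growth, Ambrosio's lower semicontinuity theorem for quasiconvex integrands in $\gsbvp$ gives $\liminf_j \int_\Omega f(\nabla u_j)\,dx \geq \int_\Omega \mathcal{Q}f(\nabla u)\,dx$, closing this side.

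For the limsup side I would proceed in two reductions. First, apply Theorem~\ref{ct_approx} with $g \equiv 1$ to approximate any $u \in \gsbvp(\Omega,\R^m)$ by a sequence $u_j \in \mathcal{W}(\Omega,\R^m)$ with $u_j \to u$ in measure, $\nabla u_j \to \nabla u$ strongly in $L^p$, and $\limsup_j \mathcal{H}^{n-1}(J_{u_j}) \leq \mathcal{H}^{n-1}(J_u)$. Continuity and $p$-growth of $\mathcal{Q}f$ then give $\int_\Omega \mathcal{Q}f(\nabla u_j)\,dx \to \int_\Omega \mathcal{Q}f(\nabla u)\,dx$, so $\limsup_j \mathcal{G}(u_j) \leq \mathcal{G}(u)$. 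By lower semicontinuity of $\mathcal{F}$ together with a standard diagonal argument, it suffices to prove $\mathcal{F}(u) \leq \mathcal{G}(u)$ for $u \in \mathcal{W}(\Omega,\R^m)$.

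For such a $u$, fix $\delta > 0$ and set $\Omega_\delta := \Omega \setminus (\overline{J_u})_\delta$, on which $u \in W^{1,\infty}$; since $J_u$ is contained in a finite union of $(n-1)$-dimensional polyhedra, $\Omega_\delta$ has Lipschitz boundary for $\delta$ small enough. I would then invoke the classical Dacorogna relaxation in Sobolev spaces on each connected component of $\Omega_\delta$, producing $v_k^\delta \in u + W_0^{1,p}(\Omega_\delta,\R^m)$ with $v_k^\delta \to u$ in $L^p(\Omega_\delta,\R^m)$ and $\int_{\Omega_\delta} f(\nabla v_k^\delta)\,dx \to \int_{\Omega_\delta} \mathcal{Q}f(\nabla u)\,dx$. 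Extending $v_k^\delta$ by $u$ on $\Omega \setminus \Omega_\delta$ gives a function in $\gsbvp(\Omega,\R^m)$ with $J_{v_k^\delta} \subseteq J_u$ and
$$
\int_\Omega f(\nabla v_k^\delta)\,dx + \mathcal{H}^{n-1}(J_{v_k^\delta}) \leq \int_{\Omega_\delta} f(\nabla v_k^\delta)\,dx + \int_{\Omega \setminus \Omega_\delta} f(\nabla u)\,dx + \mathcal{H}^{n-1}(J_u).
$$
Sending $k \to \infty$ and then $\delta \to 0$ (the second term vanishes since $\mathcal{L}^n((\overline{J_u})_\delta) \to 0$ and $\nabla u \in L^\infty$ off the jump), I would conclude $\mathcal{F}(u) \leq \mathcal{G}(u)$. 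The main technical hurdle I anticipate is arranging the boundary regularity of $\Omega_\delta$ and the matching of Dirichlet traces so that Dacorogna's Sobolev relaxation with prescribed boundary data applies cleanly; the essentially closed polyhedral structure built into $\mathcal{W}(\Omega,\R^m)$ is precisely what makes this feasible.
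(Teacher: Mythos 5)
The paper does not prove this statement at all: it is a known relaxation/integral-representation result quoted from \cite[Theorems 3.5 and 3.8]{Cagnetti2019}, which is obtained there by an abstract global-method/blow-up argument. Your proposal is therefore not being compared against an in-text proof, but it does supply a reasonable self-contained argument in the classical style, and it is essentially sound. The liminf step (Ambrosio closure in $\gsbvp$ plus quasiconvex lower semicontinuity with $p$-growth) is standard and correct. The limsup step — reduce to $\mathcal{W}(\Omega,\R^m)$ via Theorem~\ref{ct_approx} and lower semicontinuity of the envelope, then relax the Sobolev part on $\Omega_\delta := \Omega\setminus(\overline{J_u})_\delta$ with prescribed boundary datum $u$, glue, and let $\delta\to 0$ — is the natural two-step strategy and works.

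Two remarks on the technical points you flag. First, the Lipschitz regularity of $\Omega_\delta$ that you worry about is actually unnecessary: Dacorogna's relaxation with prescribed boundary datum $u+W^{1,p}_0(\Omega_\delta)$ and the Rellich--Kondrachov embedding for $W^{1,p}_0$ hold on \emph{any} bounded open set (extend by zero to a large ball), and after extending $v_k^\delta$ by $u$ on $\Omega\setminus\Omega_\delta$ one has $v_k^\delta-u\in W^{1,p}_0(\Omega_\delta)$, so $v_k^\delta\in \sbv(\Omega,\R^m)$ with $J_{v_k^\delta}\subseteq J_u$ automatically; you can drop that worry entirely. Second, your route \emph{does} silently require $\Omega$ to have Lipschitz boundary, since Theorem~\ref{ct_approx} is stated for Lipschitz domains; the theorem as written assumes only $\Omega$ open. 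This is harmless in the paper (there $\Omega=\Sigma$ is Lipschitz), but it is a genuine extra hypothesis in your argument that the blow-up proof of \cite{Cagnetti2019} avoids. Also note the typo $\int_\Sigma$ in the statement, which should read $\int_\Omega$.

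Overall: since the paper only cites the result, your proposal is best viewed as a complementary, hands-on proof; it trades generality (you use the Cortesani--Toader density result, hence need a Lipschitz domain) for transparency and self-containedness.
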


	\noindent From now on we will consider mainly the case $n=2$. 
	
	\begin{defn}\label{W hat def}
		Let $\Omega \subset \R^2$ be an open bounded set with Lipschitz boundary. 
		We denote by $\widehat{\mathcal{W}}(\Omega;\R^m)$ the space of all functions $u \in \mathcal{W}(\Omega,\R^m)$ with the following property: each connected component of the jump set of $u$ is either a segment or the union of two segments intersecting only at one endpoint and whose convex hull is contained in $\Omega$.
	\end{defn}
	
	In Theorem \ref{ct_approx}, even if $p>2$, we can take the sequence of approximants such that it is contained in $\widehat{\mathcal{W}}(\Omega,\R^m)$. Indeed, the following Lemma holds.
	
	\begin{lem}\label{segments lemma}
		Let $\Omega \subset \R^2$ be an open bounded Lipschitz domain, $m \geq 1$ and $u \in \mathcal{W}(\Omega,\R^m) \cap C^1(\Omega \setminus \overline{J_u},\R^m)$ with a polyhedral jump set $J_u \Subset \Omega$. For every $\varepsilon >0$ there exists $u_\varepsilon \in \widehat{\mathcal{W}}(\Omega,\R^m) \cap C^\infty(\Omega \setminus \overline{J_u},\R^m) $ such that
		\begin{equation*}
			\Vert u - u_\varepsilon \Vert_{L^1} < \varepsilon, \ \ \ \ \Vert \nabla u - \nabla u_\varepsilon \Vert_{L^p}<\varepsilon, \ \ \ \ \mathcal{H}^1(J_u \Delta J_{u_\varepsilon}) <\varepsilon.
		\end{equation*}
	\end{lem}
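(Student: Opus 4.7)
The plan is to modify $u$ in small neighborhoods of the bad vertices of $J_u$---those points of the jump set where the connected component containing them fails to be either a single segment or an admissible V-shape (typically points where three or more segments meet). Since $J_u$ is polyhedral, these bad vertices form a finite set $\mathcal{V}_\mathrm{bad} \subset \Omega$, and I would first pick $\eta > 0$ so small that the balls $B_\eta(v)$ with $v \in \mathcal{V}_\mathrm{bad}$ are mutually disjoint, contained in $\Omega$, and meet $J_u$ only along the segments incident to the corresponding $v$.

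Inside each $B_\eta(v)$ the idea is to sever the star by removing the small pieces $s_i \cap B_\eta(v)$ of the incident segments and replacing $u$ by a Lipschitz function that no longer jumps across them. Concretely, in $B_\eta(v) \setminus \overline{J_u}$ I would set $u_\varepsilon := \sum_i \psi_i \bar u_i$, where the $\psi_i$ are smooth angular cutoffs with $\sum_i \psi_i = 1$ subordinate to the sectors of $B_\eta(v) \setminus J_u$, and $\bar u_i \in C^1(B_\eta(v), \R^m)$ is an extension of the sector restriction $u|_{S_i}$; outside $\bigcup_v B_\eta(v)$ I keep $u_\varepsilon := u$. After optionally pairing two segments at each $v$ into a short V-shape (with convex hull automatically in $\Omega$ since $J_u \Subset \Omega$ and the sides have length $\mathcal{O}(\eta)$), every connected component of $J_{u_\varepsilon}$ becomes an admissible segment or V-shape, so $u_\varepsilon \in \widehat{\mathcal{W}}(\Omega,\R^m)$. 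The $L^1$ and $\mathcal{H}^1$ estimates are then immediate from the area $\mathcal{O}(|\mathcal{V}_\mathrm{bad}|\,\eta^2)$ of the modified region and the length $\mathcal{O}(|\mathcal{V}_\mathrm{bad}|\,\eta)$ of the removed segment pieces.

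The main obstacle I anticipate is the $L^p$-gradient estimate. Writing $\nabla u_\varepsilon = \sum_i (\nabla\psi_i)\bar u_i + \psi_i\nabla\bar u_i$ and using $\sum_i\nabla\psi_i=0$, the dangerous term in each transition wedge can be rewritten as $\sum_i (\nabla\psi_i)(\bar u_i-\bar u_j)$ and has magnitude $\mathcal{O}(1/\eta)$, contributing at order $\eta^{(2-p)/p}$, which vanishes only when $p<2$. To cover the regime $p\geq 2$, I would refine the construction by elongating the transition zones: instead of a ball, use around each incident segment a thin strip of length $\mathcal{O}(\eta)$ and fixed perpendicular width $w$, so that $\|\nabla u_\varepsilon\|_{L^\infty}$ stays bounded by $\mathcal{O}(1/w)$ while only the area shrinks as $\eta\to 0$, yielding an $L^p$-contribution of order $\eta^{1/p} w^{(1-p)/p}$ that is genuinely small for every $p>1$. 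Making these strip modifications compatible at the bad vertex itself---so that no new bad vertex is created at the strip boundary and the pairing into V-shapes is interpolated cleanly between strips---requires a careful bookkeeping and is the core technical step.

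Finally, I would upgrade $u_\varepsilon$ from $C^1$ to $C^\infty$ on $\Omega\setminus\overline{J_u}$ by standard componentwise mollification on each connected component, with a kernel scale shrinking faster than the distance to $\overline{J_u}$; this preserves all three estimates up to arbitrarily small additional errors.
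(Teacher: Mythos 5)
Your high-level plan --- isolate the finitely many bad vertices of the polyhedral jump set, modify $u$ only near them, and then estimate the resulting $L^p$-gradient error --- is the right one, and you correctly see that a naive angular-cutoff blend in a ball $B_\eta(v)$ cannot close the $L^p$-estimate for $p\ge 2$. In fact the situation is a bit worse than you state: for a partition of unity subordinate to the sectors one has $|\nabla\psi_i|\sim 1/r$ near $v$, so $\int_{B_\eta}|\nabla\psi_i|^p\,dA\sim\int_0^\eta r^{1-p}\,dr$ actually \emph{diverges} for $p\ge 2$, rather than merely being of order $\eta^{(2-p)/p}$.

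The genuine gap is in the strip refinement. Erasing the jump on a piece of length $\eta$ of an incident segment necessarily creates a new jump endpoint at distance $\eta$ from $v$, where the blended $u_\varepsilon$ must re-match the original, jumping $u$. Any longitudinal cutoff $\phi(x)$ used to close off the strip carries a term $\phi'(x)(\bar u_+-\bar u_-)$ of size $O(1/\eta)$ supported on a region of area $O(\eta w)$, contributing $\eta^{(1-p)/p}w^{1/p}$ to the $L^p$-norm of the gradient error, which blows up as $\eta\to 0$ with $w$ fixed. The estimate $\eta^{1/p}w^{(1-p)/p}$ you write accounts only for the transverse blend, not for closing the strip longitudinally; this is exactly the ``core technical step'' that you defer. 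The paper sidesteps the issue by adapting De Philippis--Fusco--Pratelli: the jump near the vertex is not removed but \emph{displaced}, the segment $\Pi$ being shortened to $\Gamma_j^\lambda$ with two short hooks $S_j^1,S_j^2$ attached so as to continue the discontinuity, and the interpolation region is a thin rectangle of dimensions $\lambda\eta\times\eta$ on which the gradient is only $O(1/\eta)$. The resulting $L^p$-error is $O(\lambda^{1/p}\eta^{(2-p)/p})$, which can be made small for every $p>1$ by taking $\lambda\ll\eta^{p-2}$ when $p\ge2$; a second pass with a fresh pair of parameters $(\mu,\delta)$ then detaches the hooks into small admissible L-shapes by the same mechanism. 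Since no jump endpoint is ever created without a continuing hook, the $W^{1,\infty}$-bound never has to degenerate faster than $1/\eta$, which is what makes the parameter bookkeeping work.
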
 
	
	The above property is essentially proved in \cite[Lemma 5.2]{Philippis2017OnTA} which however has a different statement in terms of $C^1$ manifolds. Actually, the proof is essentially based on an argument for polyhedral sets which also allows one to deduce the statement in Lemma \ref{segments lemma}. The slight adaptions which are required to this aim are sketched in the Appendix for completeness.

	\section{Approximation Results with a Maximal Rank Condition}
	
	The goal of this section is proving an approximation result for $\gsbvp$-functions in the spirit of Theorem \ref{ct_approx} with a maximal rank constraint on the Jacobian of the approximating sequence.

	\begin{thm} \label{main result 1}
		Let $\Omega \subset \R^2$ be an open bounded set with Lipschitz boundary and $1<p<\infty$. Let $u \in \gsbvp(\Omega,\R^m)$ with $m \geq 2$. Then, there exists a sequence of functions $u_j \in \Aff^*(\Omega \setminus \overline{J_{u_j}},\R^m) \cap \widehat{\mathcal{W}}(\Omega,\R^m)$ such that:
		\begin{enumerate}
			\item[\textnormal{(i)}] $u_j \to u$ in measure on $\Omega$;
			\item[\textnormal{(ii)}] $\nabla u_j \to \nabla u$ in $L^p(\Omega,\mathbb{M}^{m \times n})$;
			\item[\textnormal{(iii)}] $J_{u_j} \Subset \Omega$ for every $j \geq 1$ and $ \limsup_{j \to \infty} \mathcal{H}^1(J_{u_j}) \leq \mathcal{H}^1(J_{u}) $.
		\end{enumerate}
	\end{thm}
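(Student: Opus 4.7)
The plan is to first reduce to the case of a function already enjoying the structure required by $\widehat{\mathcal{W}}(\Omega,\R^m)$ and then correct its behaviour on each connected component of $\Omega\setminus\overline{J_u}$ so as to enforce the maximum-rank condition. A diagonal argument will glue the two layers of approximation together.

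\textbf{Step 1: preliminary reduction to a piecewise smooth approximant.} First I would apply the Cortesani--Toader theorem \ref{ct_approx} (together with Remark \ref{ct remark} and Remark \ref{cap simplexes}) and Lemma \ref{segments lemma} to obtain a first approximating sequence $w_k\in\widehat{\mathcal{W}}(\Omega,\R^m)\cap C^{\infty}(\Omega\setminus\overline{J_{w_k}},\R^m)$ with polyhedral jump $J_{w_k}\Subset\Omega$, $w_k\to u$ in measure, $\nabla w_k\to\nabla u$ in $L^p$ and $\limsup_k\mathcal{H}^1(J_{w_k})\le\mathcal{H}^1(J_u)$. Choosing a test function $g$ that is locally bounded near $\partial\Omega$ allows one to keep this control on the jump. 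By a standard diagonal procedure, proving the theorem for each $w_k$ individually (up to an error $1/k$) will suffice.

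\textbf{Step 2: flatten the cracks by a bi-$W^{1,\infty}$ diffeomorphism.} Fix one such $w=w_k$. Since $J_w$ is a finite union of segments and ``V''-shaped pairs of segments compactly contained in $\Omega$, the open set $\Omega\setminus\overline{J_w}$ is generally not Lipschitz. Following the strategy announced in the introduction (and to be established in Lemma \ref{aff diffeo lemma}), there exists a bi-$W^{1,\infty}$ and piecewise affine diffeomorphism $\Phi\colon\Omega\to\Phi(\Omega)$, arbitrarily close to the identity in the $W^{1,\infty}$ norm, which opens each crack in $J_w$ into a pair of Lipschitz boundary arcs. Then $\widetilde{w}:=w\circ\Phi^{-1}$ belongs to $W^{1,p}(\Phi(\Omega\setminus\overline{J_w}),\R^m)$ on a Lipschitz domain (or a finite disjoint union thereof, one per connected component of $\Omega\setminus\overline{J_w}$).

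\textbf{Step 3: piecewise affine approximation with maximal rank on the flattened domain.} On each Lipschitz component $U$ of $\Phi(\Omega\setminus\overline{J_w})$, I approximate $\widetilde{w}|_U$ in $W^{1,p}$ by restrictions of functions in $C_c^{\infty}(\R^n,\R^m)$ (Theorem \ref{cont density}). When $m=2$, a classical topological fix gives piecewise affine approximants with $\det\nabla v>0$ a.e.; when $m>2$, I apply Gromov--Eliashberg (Theorem \ref{thm Gromov}) on a slightly enlarged rectangle to perturb each smooth approximant into a $C^1$ immersion, and then discretize it via Proposition \ref{affine inj} to produce a piecewise affine map whose Clarke subdifferential still consists of maximal-rank matrices. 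This is the only point that requires the codomain dimension; the discretization at vertices and interfaces is precisely what ensures the rank condition holds on the whole Clarke subdifferential, not merely a.e.

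\textbf{Step 4: pull back and conclude.} Composing these piecewise affine approximants with $\Phi$ gives, on $\Omega\setminus\overline{J_w}$, a sequence in $\Aff^*(\Omega\setminus\overline{J_w},\R^m)$; since $\Phi$ is itself piecewise affine, the composition stays piecewise affine, while the extension by the jump structure inherited from $w$ keeps the map in $\widehat{\mathcal{W}}(\Omega,\R^m)$ with $J_{u_j}\subseteq J_w\Subset\Omega$ (possibly slightly larger triangulation, but the underlying crack pattern is preserved). Controlling $\|\Phi-\mathrm{Id}\|_{W^{1,\infty}}$ and the $L^p$ approximation error translates into convergence in measure and in $L^p$ for $\nabla u_j$, while $\mathcal{H}^1(J_{u_j})\le \mathcal{H}^1(J_w)$ together with Step 1 yields (iii). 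A diagonal extraction in $k$ concludes the proof.

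\textbf{Main obstacle.} The delicate point is Step 3 combined with Step 2: one must ensure that the piecewise affine maximal-rank approximants constructed on the flattened Lipschitz pieces glue, after composition with $\Phi$, into a single function on $\Omega\setminus\overline{J_w}$ whose Clarke subdifferential is everywhere of maximal rank, including at the internal vertices of the triangulation and along the interfaces between adjacent simplices. This is why the discretization must be performed sufficiently finely (Proposition \ref{affine inj}(ii)) and why one needs the preliminary $C^1$-immersion step from Theorem \ref{thm Gromov} when $m>n$, so that the maximal-rank condition on the smooth limit survives a small $W^{1,\infty}$ perturbation.
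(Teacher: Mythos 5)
Your proposal follows essentially the same route as the paper's proof: reduce via Cortesani--Toader (plus Lemma \ref{segments lemma}) to $w\in\widehat{\mathcal{W}}\cap C^{\infty}(\Omega\setminus\overline{J_w})$, open the cracks with the piecewise-affine homeomorphism of Lemma \ref{aff diffeo lemma} to pass to a Lipschitz domain $V$, approximate the pushed-forward function there by a $C^1$ immersion (Theorem \ref{thm Gromov} for $m>2$) and discretize it into $\Aff^*(V,\R^m)$ via Proposition \ref{affine inj}, then pull back by $\Phi$ and verify that the Clarke subdifferential of the composition still has maximal rank, which is exactly the delicate point the paper handles with a modulus-of-continuity estimate on $\det(M^TM)$. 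One small imprecision: $\Phi$ is a homeomorphism of $\Omega\setminus\overline{J_w}$, not of $\Omega$, and in the paper's construction the image $V=\Omega\setminus\bigcup_i D_i$ is a single Lipschitz domain rather than a disjoint union of components, since the $D_i$'s are tiny triangular holes rather than full cuts; this does not affect the substance of the argument.
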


	To prove Theorem \ref{main result 1}, we need the following technical Lemma which deals with the construction of an homeomorphism.

	\begin{lem}\label{aff diffeo lemma}
		Let $\Pi \subset \R^2$ be a segment or the union of two segments intersecting only at one endpoint. Then, for $\delta>0$ small enough, there exist $\Delta \subset \R^2$ depending on $\delta$ and a piecewise affine homeomorphism $\Phi_\delta: \R^2 \setminus \Pi \to \R^2 \setminus \Delta$, such that the following properties hold: 
		\begin{enumerate}
			\item[\textnormal{(i)}] $\R^2 \setminus \Delta$ is a Lipschitz domain;
			\item[\textnormal{(ii)}] $\Phi_\delta(x)=x$ for every $x \in \R^2 \setminus (\Pi)_\delta$;
			\item[\textnormal{(iii)}] $\Vert \Phi_{\delta}-\textnormal{Id} \Vert_{W^{1,\infty}(\R^2 \setminus \Pi)} \to 0$ as $\delta \to 0$.
		\end{enumerate}
	\end{lem}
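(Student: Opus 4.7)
The plan is to construct $\Phi_\delta$ explicitly as a piecewise affine map that equals the identity outside a tubular neighborhood of $\Pi$ and ``pries open'' the slit into a thin Lipschitz region $\Delta$. I first treat the case in which $\Pi$ is a single segment; the two-segment case reduces to this construction combined with a careful modification at the junction point.

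\emph{Single-segment case.} After a rigid motion and a rescaling, I may assume $\Pi = [0, L] \times \{0\}$. As target I take the open rhombus
\[
\Delta := \mathrm{int}\,\mathrm{conv}\bigl\{(0,0), (L, 0), (L/2, h_\delta), (L/2, -h_\delta)\bigr\}, \qquad h_\delta := \delta^2.
\]
Its boundary consists of four straight segments meeting at four corners, each with interior angle strictly in $(0, \pi)$, so $\R^2 \setminus \overline\Delta$ is a Lipschitz domain, giving (i); also $\Delta \subset (\Pi)_{h_\delta} \subset (\Pi)_\delta$. To define $\Phi_\delta$ I set $\Phi_\delta = \mathrm{Id}$ outside $R := [-\delta, L+\delta] \times [-\delta, \delta]$ and, on $R^+ := R \cap \{y \ge 0\}$, I introduce the slit vertices $(-\delta, 0), (0, 0), (L/2, 0), (L, 0), (L+\delta, 0)$ together with the upper vertices $(-\delta, \delta), (L/2, \delta), (L+\delta, \delta)$. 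I triangulate $R^+$ into six triangles with the property that only the two triangles adjacent to $(L/2, 0)$ contain it as a vertex, and define $\Phi_\delta$ on $R^+$ as the piecewise affine extension of the map that fixes every vertex except $(L/2, 0)$, which is sent to $(L/2, h_\delta)$. On $R^-$ I proceed symmetrically, sending the lower copy of $(L/2, 0)$ to $(L/2, -h_\delta)$.

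\emph{Verification.} For the homeomorphism property, each of the six upper (and six lower) image triangles is non-degenerate since at most one vertex is moved, by $h_\delta \ll \delta$, and their images tile exactly $R \setminus \overline\Delta$. Matched with the identity on $\partial R \setminus \Pi$ this yields a global continuous bijection $\R^2 \setminus \Pi \to \R^2 \setminus \Delta$ with continuous inverse. Property (ii) is immediate from $R \subset (\Pi)_{\sqrt{2}\,\delta}$ (up to a harmless adjustment of $\delta$). For (iii), on each triangle $\Phi_\delta - \mathrm{Id}$ equals the barycentric coordinate of the moved vertex times the vector $(0, \pm h_\delta)$; since that vertex lies at distance $\gtrsim \delta$ from the opposite edge, the barycentric coordinate is Lipschitz with constant $O(1/\delta)$. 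Therefore $\|\Phi_\delta - \mathrm{Id}\|_{L^\infty} \le h_\delta = \delta^2$ and $\|\nabla \Phi_\delta - I\|_{L^\infty} \le C h_\delta / \delta = C \delta$, both vanishing as $\delta \to 0$.

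\emph{Two-segment case.} When $\Pi = \Pi_1 \cup \Pi_2$ meet at a common endpoint $P$, I apply the single-segment construction to each $\Pi_i$ on the portion away from $P$ (so that each generates a rhombus $\Delta_i$ with one of its long vertices at $P$), and I design a compatible piecewise affine map on a $\delta$-triangulation around $P$ that correctly pairs the two local sheets of $\R^2 \setminus \Pi$ with the two local components of $\R^2 \setminus \Delta$ at the corresponding corner of $\Delta := \Delta_1 \cup \Delta_2$. The main technical obstacle is precisely this junction: the triangulation near $P$ must be chosen so that the piecewise affine map is bijective onto $\R^2 \setminus \Delta$, so that $\Delta \subset (\Pi)_\delta$, and so that the Lipschitz character of $\R^2 \setminus \Delta$ (away from $P$) and the estimate in (iii) are preserved. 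With the same scaling $h_\delta = \delta^2$ and triangle diameters of order $\delta$ near $P$, the $W^{1,\infty}$-smallness argument from the single-segment case transfers verbatim.
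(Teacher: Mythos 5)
Your single-segment construction is correct and is a mild variant of the paper's Case~1: you open the slit symmetrically into a thin rhombus by moving the two copies of the midpoint of $\Pi$ by $\pm h_\delta$, whereas the paper opens it one-sidedly into a triangle above the segment by compressing only the upper half-plane via $\Phi_\delta(x)=(x_1,\tfrac{x_2}{1+\delta}+\tfrac{\delta f_\delta(x_1)}{1+\delta})$; either choice gives a piecewise affine homeomorphism with the stated $W^{1,\infty}$ smallness. The two-segment case, however, contains a genuine gap that your sketch does not repair. You set $\Delta:=\Delta_1\cup\Delta_2$ where each $\Delta_i$ is a thin rhombus with one vertex at the junction $P$, and you yourself observe that near $P$ the complement $\R^2\setminus\Delta$ has \emph{two} local components, one inside and one outside the wedge of $\Pi_1,\Pi_2$. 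But an open set which near a boundary point is the disjoint union of two sectors whose closures both contain that point cannot be a Lipschitz domain: the local graph condition at $P$ fails because an epigraph $\{x_2>\phi(x_1)\}$ is locally connected, or equivalently because $\partial\Delta$ near $P$ consists of four segments meeting at a single point, which is not a one-dimensional manifold. This is a property of the target set $\Delta$ itself, not of the triangulation of the map, so no choice of $\delta$-triangulation around $P$ can make (i) hold; you flag the issue by asking only that the ``Lipschitz character of $\R^2\setminus\Delta$ (away from $P$)'' be preserved, but property (i) must hold up to and including $P$.

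The paper's Case~2 avoids the junction entirely by a different idea: it first builds a \emph{global} piecewise affine bi-Lipschitz homeomorphism $\Psi$ of $\R^2$, equal to the identity outside a small neighborhood of $\textnormal{conv}(\Pi)$, that straightens the two-segment $\Pi$ onto its chord $\gamma$; it then applies the single-segment construction to $\gamma$, producing $\Theta_\delta:\R^2\setminus\gamma\to\R^2\setminus D$ with $D$ a thin triangle, and conjugates, $\Phi_\delta:=\Psi^{-1}\circ\Theta_\delta\circ\Psi$. The set $\Delta=\Psi^{-1}(D)$ is the image of a single triangle under a piecewise affine bi-Lipschitz bijection, hence a simple polygon without a pinch, so $\R^2\setminus\Delta$ is Lipschitz automatically, and since the bi-Lipschitz constant $M$ of $\Psi$ is independent of $\delta$, property (iii) survives conjugation via $\|D\Phi_\delta-I\|_\infty\le M^2\|D\Theta_\delta-I\|_\infty$. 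To salvage a direct gluing approach you would have to redefine $\Delta$ near $P$ so that the two pieces merge into one polygon (e.g.\ by opening on one side of $\Pi$ only so that $\Delta$ is connected and has $\Pi$ as part of its boundary), and then verify bijectivity of the piecewise affine map near $P$; the straightening map $\Psi$ is precisely what packages that verification cleanly.
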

	
	\begin{proof}
		We divide the proof into two cases.
		
		\noindent \textbf{Case 1:} We first assume that $\Pi$ is a segment. Without loss of generality, we can suppose that $\Pi \subset \left\{ x_2=0 \right\}$.
		
		We claim that there exists a continuous piecewise affine function $f : \R \to \R_{\geq 0}$ such that for every $\delta>0$: 
		\begin{enumerate}
			\item[\textnormal{(1)}] $f(x) > 0$ for every $x \in \Int(\Pi)$, $f(x) = 0$ for every $x \in \R \setminus \Int(\Pi)$ and $\max(f)=1$ on $\R$,
			\item[\textnormal{(2)}] Setting $\Delta:= \left\{x \in \Pi \times \R \ : \ x_2 \leq \delta f(x_1)\right\}$, the open set $\R^2 \setminus \Delta$ is a Lipschitz domain.
		\end{enumerate} 
		To prove the claim, let $p^1$ and $p^2$ be the endpoints of $\Pi$ and for $z=(z_1,0) \in \Int(\Pi)$, let $p^0=(z_1,1)$. On $\Pi$ we define $f$ as the piecewise affine function whose graph is given by the two segments connecting $p^1$ with $p^0$ and $p^0$ with $p^2$. We further set $f=0$ on $\R \setminus \Pi$. By construction, $f$ satisfies the requirements. Observe that the set $\Delta$ is a triangle for every $\delta >0$. Therefore, $\R^2 \setminus \Delta$ is a Lipschitz domain.

		Set $f_\delta(x):=\delta f(x)$.
		We define $\Phi_\delta: \R^2 \setminus \Pi \to \R^2 \setminus \Delta$ as follows:
		\begin{equation}\label{homeo defn}
			\Phi_\delta(x):=
			\begin{cases*}
				\displaystyle \left(x_1,\frac{x_2}{1+\delta} + \frac{\delta f_\delta(x_1)}{1+\delta}\right) & $x_1 \in \Pi$ and $0<x_2<f_\delta(x_1)$, \\
				x & otherwise in $\R^2 \setminus \Pi$.
			\end{cases*}
		\end{equation}
		Notice that $\Phi_\delta$ is a continuous bijection from $\R^2 \setminus \Pi$ and $\R^2 \setminus \Delta$ and $\Phi_\delta(x)=x$ outside $(\Pi)_\delta$. 
		It either holds
		$$
		D\Phi_\delta = I \ \ \ \mbox{or} \ \ \ 
		D\Phi_\delta(x)=
		\left( \begin{array}{cc}
			1 & 0 \\
			\displaystyle  \frac{\delta f'_\delta(x_1)}{1+\delta}   & \displaystyle \frac{1}{1+\delta}
		\end{array}   \right).
		$$
		Since $f_\delta$ is piecewise affine and continuous, we have that $\Phi_\delta$ is piecewise affine on $\R^2 \setminus \Pi$. Moreover, by a direct computation we get
		\begin{equation*}
			\Phi_\delta^{-1}(x)=
			\begin{cases*}
				(x_1,(1+\delta)x_2 -\delta f_\delta(x_1)) & $ x_1 \in \Pi$ and $\displaystyle \frac{\delta f_\delta(x_1)}{1+\delta}<x_2<f_\delta(x_1)$, \\
				x & otherwise in $\R^2 \setminus \Delta$.
			\end{cases*}
		\end{equation*}
		As for the differential of $\Phi_\delta^{-1}$ it either holds 
		$$
		D\Phi_\delta^{-1}= I \ \ \ \mbox{or} \ \ \ 
		D\Phi_\delta^{-1}(x)=
		\left( \begin{array}{cc}
			1 & 0 \\
			- \delta f'_\delta(x_1)  & 1+\delta
		\end{array}   \right).
		$$
		Finally, since $f_\delta$ and $f_\delta'$ are uniformly bounded with respect to $\delta$, we have that
		$$\lim_{\delta \to 0} \Vert \Phi_\delta-\textnormal{Id} \Vert_{W^{1,\infty}(\R^2 \setminus \Pi)} = 0.$$
		
		\textbf{Case 2:} Now we assume that $\Pi$ is the union of two segments intersecting only at one endpoint. We call $\gamma$ the segment connecting the endpoints of $\Pi$. As in the previous case, it is not restrictive to assume $\gamma \subset \{x_2=0\}$. We further define $T:=\textnormal{conv}(\Pi)$, and observe that $\partial T=\Pi \cup \gamma$. 
		
		\noindent \textbf{Step 1:} We claim that there exists a piecewise affine homeomorphism $\Psi:\R^2  \to \R^2 $ such that $\Psi(x)=x$ out of an arbitrarily small neighborhood $U$ of $T$, $\Psi(\Pi)=\gamma$ and $\Psi$ is bi-Lipschitz of constant $M=M(U)$.
		
		To construct $\Psi$, let $T^-$ be a triangle contained in the half-plane opposite to $T$, having a side coinciding with $\gamma$ and such that the height of $T^-$ is arbitrarily small. Consider two triangles $\widehat{T}$ and $\widehat{T}^-$ with basis $\gamma$, with the property $T \subset \widehat{T}$, $T^- \subset \widehat{T}^-$ in such a way that $|(\widehat{T} \setminus T) \cup (\widehat{T}^- \setminus T^-)|$ is arbitrarily small. Finally suppose that all their free vertices are aligned on a line perpendicular to $\gamma$. 
		We can construct $\Psi$ piecewise affine, as depicted in Figure \ref{f:2}, such that $\Psi(\widehat{T} \setminus T)=\widehat{T} \cup T$, $\Psi(T)=T^-$, $\Psi(\widehat{T}^- \cup T^-)=\widehat{T}^- \setminus T^-$ and $\Psi=\textnormal{Id}$ elsewhere. Notice that the triangulation of the set $\left\{ \Psi(x) \neq x \right\}$ is composed by the eight triangles depicted in Figure \ref{f:2}.
		
		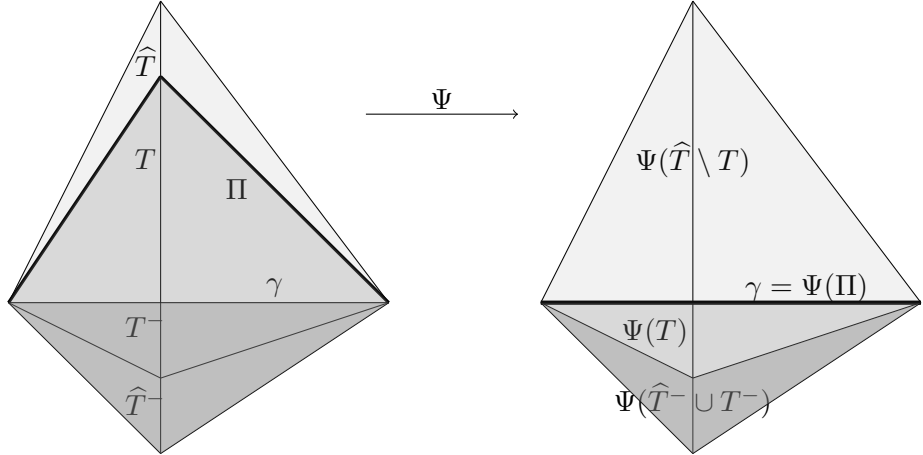
\begin{figure}[h!]
			\begin{tikzpicture}
				
				\draw[thin] (0,0) -- (5,0);
				\draw[very thick] (0,0) -- (2,3);
				\draw[very thick] (2,3) -- (5,0);
				\draw[thin] (0,0) -- (2,4);
				\draw[thin] (2,4) -- (5,0);
				\draw[thin] (0,0) -- (2,-1);
				\draw[thin] (2,-1) -- (5,0);
				\draw[thin] (0,0) -- (2,-2);
				\draw[thin] (2,-2) -- (5,0);
				\draw[thin] (2,4) -- (2,-2);
				\node at (1.8,1.9) {$T$};
				\node at (1.8,3.2) {$\widehat{T}$};
				\node at (1.8,-0.3) {$T^-$};
				\node at (1.8,-1.3) {$\widehat{T}^-$};
				\node at (3,1.5) {$\Pi$};
				\node at (3.5,0.2) {$\gamma$};
				\draw[ultra thick] (7,0) -- (12,0);
				\draw[thin] (7,0) -- (9,4);
				\draw[thin] (9,4) -- (12,0);
				\draw[thin] (7,0) -- (9,-1);
				\draw[thin] (9,-1) -- (12,0);
				\draw[thin] (7,0) -- (9,-2);
				\draw[thin] (9,-2) -- (12,0);
				\draw[thin] (9,4) -- (9,-2);
				\draw[->] (4.7,2.5) -- (6.7,2.5);
				\node at (5.7,2.7) {$\Psi$};
				\node at (9,1.9) {$\Psi(\widehat{T} \setminus T)$};
				\node at (8.5,-0.4) {$\Psi(T)$};
				\node at (9.0,-1.3) {$\Psi(\widehat{T}^- \cup T^-)$};
				
				\node at (10.5,0.2) {$\gamma=\Psi(\Pi)$};
				\fill[fill= gray, fill opacity=0.1] (0,0) -- (2,4) -- (5,0) -- (2,3) -- cycle;
				\fill[fill= gray, fill opacity=0.5] (0,0) -- (2,-2) -- (5,0) -- cycle;
				\fill[fill= gray, fill opacity=0.3] (0,0) -- (2,3) -- (5,0) -- cycle;
				\fill[fill= gray, fill opacity=0.1] (7,0) -- (9,4) -- (12,0) -- cycle;
				\fill[fill= gray, fill opacity=0.5] (7,0) -- (9,-1) -- (12,0) -- (9,-2) -- cycle;
				\fill[fill= gray, fill opacity=0.3] (7,0) -- (9,-1) -- (12,0) -- cycle;
			\end{tikzpicture}
			\caption{The way $\Psi$ acts in $\left\{ \Psi \neq \textnormal{Id} \right\}$. Notice that $\Psi(C)=C$, $\Psi(D)=E$, $\Psi(E)=F$, $\Psi(G)=G$ and $\Psi(F)$ is a point contained in the line joining $\Psi(E)$ and $G$.} \label{f:2}
		\end{figure}
		Furthermore, by construction $\Psi (x)=x$ for every $x \notin \widehat{T} \cup \widehat{T}^-$, which is contained in a small neighborhood $U$ of $T$, and
		\begin{equation}\label{boundedness of DPsi}
			\| D\Psi \Vert_{W^{1,\infty}(\R^2)} + \Vert D\Psi^{-1} \Vert_{W^{1,\infty}(\R^2 )} \leq M=M(U).
		\end{equation}
		Finally, it follows by construction that
		\begin{equation} \label{D psi -1}
			D\Psi^{-1}(x_1,x_2)=D\Psi^{-1}(x_1,\tilde{x}_2) \ \ \ \mbox{for every $(x_1,x_2), \ (x_1,\tilde{x}_2) \in T$}.
		\end{equation}

		\noindent \textbf{Step 2:} Let $\Psi$, $M$ and $\gamma$ be given by the previous step. Fix $\delta \in (0,1/2)$, as in Case 1 we construct $\Theta_\delta:\R^2 \setminus \gamma \to \Omega$ such that the image set $\Omega$ is a Lipschitz domain (depending on $\delta$) and $\Theta_\delta(x)=x$ for every $x \in \R^2 \setminus (\gamma)_{\delta/M}$. Observe that by the construction given in Case 1, see (\ref{homeo defn}), we may assume that 
		$$
		\langle \Theta_\delta (x), e_1 \rangle=x_1 \ \ \mbox{for every $x \in \R^2 \setminus \gamma$}.
		$$ 
		Up to taking $\delta$ small enough, $\left\{ \Theta_\delta \neq \textnormal{Id} \right\} \subseteq T$, where $T$ is given in the previous step. Combining these properties with (\ref{D psi -1}), we infer that
		\begin{equation}\label{D psi -1 2}
			D\Psi^{-1}(\Theta_\delta(y))=D\Psi^{-1}(y) \ \ \ \mbox{for a.e. $y \in \R^2 \setminus \gamma$}.
		\end{equation}
		
		We are now in a position to define $\Phi_\delta$. Let $\delta>0$ small enough such that (\ref{D psi -1 2}) holds. Set $\Delta:=\R^2 \setminus \Psi^{-1}(\Omega)$, define $\Phi_\delta:\R^2 \setminus \Pi \to \R^2 \setminus \Delta$ as $\Phi_\delta:=\Psi^{-1} \circ \Theta_\delta \circ \Psi$. 
		Observe that $\Phi_\delta$ is piecewise affine and that it is the identity in $\R^2 \setminus (\Pi)_\delta$.
		Indeed, if $x \notin \Pi_\delta$, then $\Psi(x) \notin (\gamma)_{\delta/M}$. Hence $\Theta_\delta(\Psi(x))=\Psi(x)$ and $\Phi_\delta(x)=x$ and this proves property (ii).
		
		Observe that $\Theta_\delta(\R^2 \setminus \gamma)$ is the complementary set of a triangle. Hence $\Delta$, which is the image of such a triangle through the piecewise affine bi-Lipschitz homeomorphism $\Psi$, is a polygon without self intersections. It follows that the complementary set $\R^2 \setminus \Delta$ is a Lipschitz open set. This is property (i).
		
		We are left to prove property (iii). From (\ref{D psi -1 2}) we deduce that for almost every $x \in \R^2 \setminus \Pi$,
		\begin{align*}
			|D\Phi_\delta(x)-x| & = |D\Psi^{-1} (\Theta_\delta(\Psi(x)))D\Theta_\delta(\Psi(x))D\Psi(x)-x| \\ & =|D\Psi^{-1} (\Psi(x))D\Theta_\delta(\Psi(x))D\Psi(x)-x|.
		\end{align*}
		Thus, it follows from (\ref{boundedness of DPsi}) that $\Vert D\Phi_\delta -I \Vert_{L^\infty} \leq M^2 \Vert D\Theta_\delta -I \Vert_{L^\infty}$. Since $D\Theta_\delta \to I$ uniformly, we infer that $D\Phi_\delta \to I $ as $\delta \to 0$ uniformly in $\R^2 \setminus \Pi$. This implies $\Vert \Phi_\delta -\textnormal{Id} \Vert_{W^{1,\infty}(\R^2 \setminus \Pi)} \to 0$ as $\delta \to 0$ and concludes the proof of (iii).
	\end{proof}
	
	\begin{rem}\label{homom for n magg 2}
		We notice en passant that the same construction can be generalized for $n \geq 3$ when $\Pi$ is an ($n-1$)-dimensional simplex. In this case, the set $\Delta$ is an $n$-dimensional simplex with base $\Pi$ and the homeomorphism can be constructed analogously to Case 1. Finally, since $\Delta$ is a convex polyhedron we have that $\R^n \setminus \Delta$ is a Lipschitz domain by \cite[Theorem 1.2.2.3]{Grisvard}.
	\end{rem}

	We are now in a position to prove Theorem \ref{main result 1}
	
	\begin{proof}[Proof of Theorem \ref{main result 1}]
		By a double sequence argument and Theorem \ref{ct_approx} it suffices to prove that given $u \in \widehat{\mathcal{W}}(\Omega,\R^m) \cap C^\infty(\Omega \setminus \overline{J_{u}},\R^m)$, for all $\varepsilon >0$, there exists a function $\tilde{u}$ with the desired properties and such that 
		\begin{equation*}
			\Vert u - \tilde{u} \Vert_{W^{1,p}(\Omega \setminus \overline{J_u})} \leq 12\varepsilon, \ \ \ \ \ \ \ J_{\tilde{u}} \subseteq J_{u}.
		\end{equation*}

		%
		
		For $i=1,\dots,K$ denote the connected components of $\overline{J_u}$ with $\Pi_i$. Correspondingly, we fix $A_i \Subset \Omega$ an open smooth neighborhood of $\Pi_i$. We can assume that the sets $A_i$ are pairwise disjoint.
		
		Using Lemma \ref{aff diffeo lemma} we construct $W^{1,\infty}$-piecewise affine homeomorphisms $\Phi_i: A_i \setminus \Pi_i \to A_i \setminus D_i$, such that $A_i \setminus D_i$ is a Lipschitz domain and $\Phi_i(x)=x$ outside a neighborhood of $\Pi_i$ compactly contained in $A_i$. 
		Moreover, using (iii) of Lemma \ref{aff diffeo lemma} we can assume that for every $i=1,\dots,K$,
		\begin{align}\label{phii_bound}
			\begin{split}
				& \Vert D\Phi \Vert_{L^\infty(A_i \setminus \Pi_i)}+\Vert D\Phi^{-1} \Vert_{L^\infty(A_i \setminus D_i)} \leq 3 \\ & \det(M) \geq \frac{1}{2} \ \ \mbox{for every $M \in \partial \Phi_i(x)$ and for every $x \in  A_i \setminus \Pi_i$}.
			\end{split}
		\end{align}
		Let us set $V:=\Omega \setminus\cup_{i=1}^K D_i $. We define $\Phi: \ \Omega \setminus \overline{J_{u}} \to V$ as
		\begin{equation*}
			\Phi(x) := 
			\begin{cases*}
				x & if $x \in \Omega \setminus \bigcup_{i=1}^K A_i$ \\
				\Phi_i(x) & if $ x \in A_i \setminus \Pi_i$, for $i=1,\dots,K$.
			\end{cases*}
		\end{equation*}
		Moreover, by (\ref{phii_bound}) and using the fact that the $A_i$'s are pairwise disjoint, we have that
		\begin{align}\label{phi_bound}
			\begin{split}
			& \Vert D\Phi \Vert_{L^\infty(\Omega)}+\Vert D\Phi^{-1} \Vert_{L^\infty(V)} \leq 3 \\ & \det(M) \geq \frac{1}{2} \ \ \mbox{for every $M \in \partial \Phi(x)$ and for every $x \in  \Omega \setminus \overline{J_{u}}$}.
			\end{split}
		\end{align}


		For every $x \in V$, set $v(x):=u (\Phi^{-1}(x))$. Using (\ref{phi_bound}), we deduce that $v \in W^{1,\infty}(V,\R^m) \cap C(V,\R^m)$. The set $V$ is a bounded open Lipschitz domain by construction. 
		
		In the following, we restrict ourselves to the case $m>2$. Let us fix an open rectangle $R$ containing $V$. Using Theorem \ref{cont density} and Theorem \ref{thm Gromov} we find $\eta=\eta(\varepsilon)>0$ and $\phi \in C^1(\overline{R},\R^m)$ such that 
		\begin{equation}\label{1005}
		\Vert v-\phi \Vert_{W^{1,p}(V)} \leq \frac{\varepsilon}{2}, \ \ \ \det((\nabla \phi(x))^T(\nabla \phi(x))) \geq \eta \ \ \mbox{for every $x \in R$}.
		\end{equation}
		Let $\sigma>0$ small which will be determined later. By Proposition \ref{affine inj} there exists $w \in \Aff^*(V,\R^m)$ such that
		\begin{align}\label{2}
			\Vert \phi-w \Vert_{W^{1,\infty}(V)} \leq \frac{\varepsilon}{2\mathcal{L}^n(V)}, \ \ \ \diam(\partial w(x)) \leq \sigma \ \ \mbox{for every $x \in V$}.
		\end{align}
		Choosing $\sigma$ suitably small \eqref{1005} and \eqref{2} also implies 
		\begin{equation}\label{1002}
		\det(M^TM) \geq \frac{\eta}{2} \ \ \mbox{for every $M \in \partial w(x)$ and every $x \in V$}.
		\end{equation}
		For every $x \in \Omega \setminus \overline{J_{u}}$, set $\tilde{u}(x):=w ( \Phi(x))$. We now check that the function $\tilde{u}$ has all the desired properties.
		By \eqref{phi_bound}, \eqref{1005} and (\ref{2}) we deduce that 
		$$
		\Vert u -\tilde{u} \Vert^p_{L^p(\Omega)} \leq 3 \Vert v -w \Vert^p_{L^p(V)} \leq 3 \varepsilon^p.
		$$
		By construction we have $J_{\tilde{u}} \subseteq J_{u}$. Moreover, since $\tilde{u}$ is piecewise affine and $J_u$ is polyhedral, we have that $J_{\tilde{u}}$ has a finite number of connected components, which implies $\tilde{u} \in \widehat{\mathcal{W}}(\Omega,\R^m)$.
		
		We want to prove $\tilde{u} \in \Aff^*(\Omega \setminus \overline{J_{\tilde{u}}}, \R^m)$. In particular, we show that
		\begin{equation}\label{property}
			\det(M^T M) \geq \frac{\eta}{8} \ \ \mbox{for every $M \in \partial \tilde{u}(x)$ and every $x \in \Omega \setminus \overline{J_{\tilde{u}}}$}.
		\end{equation} 
		To this aim we denote by $\mathcal{T}$ the triangulation of $\tilde{u}$ on $\Omega \setminus \overline{J_{\tilde{u}}}$ and we fix $x \in \Omega \setminus \overline{J_{\tilde{u}}}$. If $x \in T \setminus \partial T$ for some $T \in \mathcal{T}$ then property \eqref{property} follows immediately from (\ref{phi_bound}) and \eqref{1002}. Otherwise let $K \in \N$ and $T_1,\dots,T_K$ be all the elements of $\mathcal{T}$ containing $x$. Let $M \in \partial \tilde{u}(x)$. By construction $$\partial \tilde{u}(x)=\textnormal{conv}\left(\left\{ \nabla w(\Phi(x_k)) D\Phi(x_k) \right\}_{k=1}^K\right),$$ 
		where $x_k \in T_k \setminus \partial T_k$ for every $k=1,\dots,K$. Hence there exist $\lambda_k \geq 0$ such that $\sum_k \lambda_k=1$ and $M=\sum_k \lambda_k\nabla w(\Phi(x_k)) D\Phi(x_k)$. We have $|M| \leq 3\Vert \nabla \phi \Vert_{L^\infty}$. Let $\omega$ be the uniform modulus of continuity of the function $\det: \ \mathbb{M}^{2 \times 2} \to \R$ restricted to the closure of the ball $B_{9 \Vert \nabla \phi \Vert_{L^\infty}^2}(0)$. Take $\sigma$ small such that $\omega(6\sigma \Vert \nabla \phi \Vert_{L^\infty}+18\sigma) \leq \eta/8$. We also notice that $\nabla w(\Phi(x_k)) \in \partial w(\Phi(x))$ and $D\Phi(x_k) \in \partial \Phi(x)$ for every $k=1,\dots,K$. Thus, using \eqref{phi_bound}, \eqref{2}, \eqref{1002}, we have
		\begin{align}
			\label{2001}
			\det(M^TM) & \geq  \det \left( \left(\sum_{k=1}^K \lambda_k\nabla w(\Phi(x_1)) D\Phi(x_k)\right)^T \left(\sum_{k=1}^K \lambda_k\nabla w(\Phi(x_1)) D\Phi(x_k)\right)\right)+ \\
			\nonumber & \ \ \ \ \vphantom{\int} - \omega(6\sigma \Vert \nabla \phi \Vert_{L^\infty}+18\sigma) \\
			\nonumber & \geq \frac{\eta}{2} \det \left( \left(\sum_{k=1}^K \lambda_k D\Phi(x_k)\right)^T  \left(\sum_{k=1}^K \lambda_k D\Phi(x_k)\right)\right)- \omega(6\sigma \Vert \nabla \phi \Vert_{L^\infty}+18\sigma) \\
			\nonumber & \geq \frac{\eta}{4}- \omega(6\sigma \Vert \nabla \phi \Vert_{L^\infty}+18\sigma) \geq \frac{\eta}{8}.
		\end{align}
		This proves \eqref{property}.
		
		We are only left to estimate the $L^p$ distance between $\nabla \tilde{u}$ and $\nabla u$. Using (\ref{phi_bound}) \eqref{1005} and \eqref{2}, we have
		\begin{align*}
			\int_\Omega |\nabla \tilde{u}(x)-\nabla u(x)|^p \, dx & = \int_\Omega |\nabla (w (\Phi(x)))-\nabla(v( \Phi(x)))|^p \, dx \\  & = \int_\Omega |\nabla w( \Phi(x)) D\Phi(x)-\nabla v (\Phi(x)) D\Phi(x)|^p \, dx \\ & \leq 3^p \int_\Omega |\nabla w (\Phi(x))-\nabla v (\Phi(x))|^p \, dx \\ & \leq 3^p \int_V 3 |\nabla w(x')-\nabla v (x')|^p \, dx'  \leq 3^{p+1}\varepsilon^p.
		\end{align*}
		
		To conclude, the argument to prove the case $m=2$ is analogous to the one used for $m>2$. Actually it is simpler due to the definition of $\Aff^*$ when $m=n$.

	\end{proof}
	
	\begin{rem}
		\label{r:3.4}
		Proposition \ref{main result 1} can be immediately generalized to the setting $n > 2$ and $1<p \leq 2$ a sequence of functions $u_j \in \Aff^*(\Omega \setminus \overline{J_{u_j}},\R^m) \cap \mathcal{W}(\Omega,\R^m)$ satisfying properties (i)--(iii) for $m \geq n$ . Indeed in this case by Remark \ref{cap simplexes} we can assume that each approximant obtained using Theorem \ref{ct_approx} is such that the connected components of its jump set are ($n-1$)-dimensional simplexes compactly contained in $\Omega$. Hence, using Remark \ref{homom for n magg 2}, we can repeat the construction in the proof of Proposition \ref{main result 1}.
	\end{rem}

	\section{Dimension Reduction under Noncompenetration}
	
	\subsection{Statement of the Main Result}

	Let $1<p<\infty$, $\Sigma$ be a bounded open subset of $\R^2$ with Lipschitz boundary and set $\Sigma_\rho:=\Sigma \times (-\rho/2,\rho/2)$ for $\rho>0$, and we refer to \eqref{e:Grho}, \eqref{G0} and \eqref{reduced density} for the definitions of $\rho^{-1}\mathcal{G}_\rho$, $\mathcal{G}_0$ and the reduced density $W_0$. Our main result is the following.

	\begin{thm}\label{gamma limit teo}
		Let $\Sigma$ be an open bounded subset of $\R^2$ with Lipschitz boundary and $1<p<\infty$. Let $W:\mathbb{M}^{3 \times 3} \to [0,+\infty]$ fulfill \textnormal{($H_1$)}--\textnormal{($H_5$)}. Then 
		$$
		\Gamma-\lim_{\rho \to 0} \rho^{-1} \mathcal{G}_\rho = \mathcal{G}_0
		$$
		in $L^0(\Sigma_1,\R^3)$ with respect to the topology induced by the convergence in measure.
	\end{thm}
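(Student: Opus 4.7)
The proof proceeds along the classical two-inequality scheme for $\Gamma$-convergence, leveraging the density theorem (Theorem~\ref{main result 1}) and the technical tools outlined in the introduction.

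For the $\Gamma$-liminf inequality, I would start from a sequence $u_\rho \to u$ in measure with $\liminf \rho^{-1} \mathcal{G}_\rho(u_\rho) < +\infty$. Hypothesis ($H_4$) yields uniform $L^p$ bounds on $\nabla_\alpha u_\rho$ and on $\rho^{-1}\partial_3 u_\rho$, hence $\partial_3 u_\rho \to 0$ strongly, so $u$ is independent of $x_3$, i.e.\ $u \in \mathcal{A}$. For the surface contribution, the bound $\psi_\rho(\nu) \geq 1$ gives $\mathcal{H}^2(J_{u_\rho}) \leq C$, while the fact that $\psi_\rho(\nu)\to+\infty$ whenever $\nu_3\neq 0$ forces (via Fatou and the standard lower semicontinuity of the jump measure in $\gsbvp$) the limit jump set to be vertical, and therefore identifiable with $J_u$ as a set in $\Sigma$. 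For the bulk contribution, the pointwise inequality $W(E|\xi) \geq W_0(E)$ together with Theorem~\ref{relaxation gsbvp} (applied to $\mathcal{R}W_0$, which has $p$-growth by Lemma~\ref{lemma rank one}) and the lower semicontinuity of the relaxed functional yields $\liminf \int W_0(\nabla_\alpha u_\rho)\,dx \geq \int \mathcal{Q}W_0(\nabla_\alpha u)\,dx$, so that $\liminf \rho^{-1}\mathcal{G}_\rho(u_\rho) \geq \mathcal{G}_0(u)$.

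For the $\Gamma$-limsup inequality, a three-stage diagonal approximation is in order. First, given $u \in \mathcal{A}$ with $\mathcal{G}_0(u)<+\infty$, invoke Theorem~\ref{main result 1} to find a sequence $u^k \in \widehat{\mathcal{W}}(\Sigma,\R^3)\cap \Aff^*(\Sigma\setminus\overline{J_{u^k}},\R^3)$ with $u^k \to u$ in measure, $\nabla u^k \to \nabla u$ in $L^p$, and $\limsup_k \mathcal{H}^1(J_{u^k}) \leq \mathcal{H}^1(J_u)$. Second, apply Proposition~\ref{ben belg lemma} (the Ben Belgacem-type construction) to replace $W_0$ by $\mathcal{Q}W_0$ in the bulk part on this dense class, losing only a controlled error in the approximation. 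Third, for each $u^k$ use Theorem~\ref{gamma limsup prop} to construct a $\rho$-indexed recovery sequence whose energies are asymptotically controlled by $\mathcal{G}_0^w(u^k) = \int W_0(\nabla u^k)\,dx + \mathcal{H}^2(J_{u^k})$. A standard diagonalization, possible thanks to the metrizability of convergence in measure on $L^0(\Sigma_1,\R^3)$, then produces a recovery sequence $\{u_\rho\}$ for $u$ itself.

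The principal difficulty is concentrated in the third stage: the recovery sequence must live on the full three-dimensional domain across the cracked region $\Sigma \setminus \overline{J_{u^k}}$, must respect the non-interpenetration constraint $\det\nabla u_\rho > 0$ encoded by ($H_3$), and must simultaneously match both the bulk target $\int W_0(\nabla u^k)\,dx$ and the surface target $\mathcal{H}^2(J_{u^k})$ without blowing up $\psi_\rho(\nu_{u_\rho})$ as $\rho\to 0$. This is precisely where the piecewise affine diffeomorphisms of Lemma~\ref{aff diffeo lemma} enter, to straighten the non-Lipschitz cracked geometry into a Lipschitz one; the refined local-immersion approximation on regular subdomains of Lemma~\ref{trabelsi lemma modified} then provides the pointwise optimization of $W$ against $W_0$ in each chart; and Lemmas~\ref{lem interchange subset} and~\ref{selection} guarantee that the glued construction keeps the jump set essentially polyhedral and vertical, so that $\psi_\rho(\nu_{u_\rho})$ remains close to $1$. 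Since all these ingredients are encapsulated in Theorem~\ref{gamma limsup prop}, once that result is in hand the closing diagonal argument is routine.
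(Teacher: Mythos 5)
Your proposal correctly identifies the main ingredients and the two-sided structure, and the $\Gamma$-liminf argument is essentially the paper's. On the $\Gamma$-limsup side, however, there is a missing intermediate step: Proposition~\ref{ben belg lemma} does \emph{not} replace $W_0$ by $\mathcal{Q}W_0$. What the Ben Belgacem construction (Lemma~\ref{BB lemma}) yields, iterated through the Kohn--Strang scheme of Lemma~\ref{KS lemma}, is the bound $\overline{\mathcal{G}_0^w}(u) \leq \int_\Sigma \mathcal{R}W_0(\nabla u)\,dx + \mathcal{H}^1(J_u)$ in terms of the \emph{rank-one} convex envelope. The passage from $\mathcal{R}W_0$ to $\mathcal{Q}W_0$ requires two further facts you did not invoke: (a) $\mathcal{R}W_0$ — unlike $W_0$ itself — enjoys genuine two-sided $p$-growth (Lemma~\ref{lemma rank one}), so the $\gsbvp$ integral representation result (Theorem~\ref{relaxation gsbvp}) applies to $\mathcal{G}_0^{\mathcal{R}}$ and turns the integrand into $\mathcal{Q}(\mathcal{R}W_0)$; and (b) the algebraic identity $\mathcal{Q}(\mathcal{R}W_0)=\mathcal{Q}W_0$, which holds because $\mathcal{Q}W_0$ is rank-one convex and lies below $\mathcal{R}W_0$. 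Without the detour through $\mathcal{R}W_0$ one cannot invoke Theorem~\ref{relaxation gsbvp} on $W_0$ at all, since $W_0=+\infty$ on degenerate matrices — this is precisely why the rank-one convexification is there.

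A secondary inaccuracy: Lemmas~\ref{lem interchange subset} and~\ref{selection} have nothing to do with keeping the jump set polyhedral or the normals vertical. Those properties of the recovery sequence $u_{j,\rho}(x) = v_j(\Phi_\delta(x)) + \rho x_3\,\varphi_j(\Phi_\delta(x))$ hold by construction, since its jump set is exactly $J_u \times (-\tfrac12,\tfrac12)$ and hence $\psi_\rho(\nu_{u_{j,\rho}}) \equiv 1$. What those two lemmas actually furnish is the third column $\varphi_j$: a smooth selection, almost optimal in the infimum defining $W_0$ on the relevant pieces, that satisfies the determinant lower bound $\det(\,\cdot\,|\varphi_j)\geq 1/\beta$ \emph{globally}, which is what keeps the bulk term finite and controlled via ($H_5$). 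Apart from these two points, your outline matches the paper's, up to an immaterial reordering: you approximate first by Theorem~\ref{main result 1} and then rank-one convexify, whereas the paper treats nice $v$ by Belgacem first and only then passes to general $u$ via Theorem~\ref{main result 1} inside Proposition~\ref{ben belg lemma}.
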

	
	The $\Gamma$-liminf inequality is analogous to the case without constraints on the determinant.
	
	\begin{prop}\label{gamma liminf}
		Let $u \in \mathcal{A}$. Then, for every sequence $u_\rho  \in \gsbvp(\Sigma_1,\R^3)$ such that $u_\rho \to u$ in measure in $\Sigma_1$, we have
		\begin{equation*}
			\mathcal{G}_0(u) \leq \liminf_{\rho \to 0} \rho^{-1}\mathcal{G}_\rho(u_\rho).
		\end{equation*}
	\end{prop}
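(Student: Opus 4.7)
The plan is to reduce the statement to a standard lower-semicontinuity result in $GSBV^{p}(\Sigma_{1}, \R^{3})$ for a quasiconvex free-discontinuity functional, mirroring the unconstrained Sobolev argument of \cite{hafsa2006nonlinear}. Without loss of generality I may assume $L := \liminf_{\rho \to 0} \rho^{-1}\mathcal{G}_{\rho}(u_{\rho}) < +\infty$ and extract a subsequence (not relabeled) along which $\rho^{-1}\mathcal{G}_{\rho}(u_{\rho}) \to L$. By the definition of $W_{0}$ and of the quasiconvex envelope, $W(E|\xi) \geq W_{0}(E) \geq \mathcal{Q}W_{0}(E)$ for every $E \in \mathbb{M}^{3 \times 2}$ and $\xi \in \R^{3}$, and $\psi_{\rho}(\nu) \geq |\nu| = 1$ for every $\nu \in \mathbb{S}^{2}$ (recall that we care about $\rho$ small); hence
\begin{equation*}
\rho^{-1}\mathcal{G}_{\rho}(u_{\rho}) \geq \int_{\Sigma_{1}} \mathcal{Q}W_{0}(\nabla_{\alpha} u_{\rho}) \, dx + \mathcal{H}^{2}(J_{u_{\rho}}).
\end{equation*}
Combining with $(H_{4})$ and the bound $\int_{\Sigma_{1}} |\rho^{-1}\partial_{3} u_{\rho}|^{p} \, dx \leq C$ coming from the same hypothesis, I obtain a uniform $L^{p}$-bound on $\nabla u_{\rho}$ (in fact $\partial_{3} u_{\rho} \to 0$ in $L^{p}$, consistently with $u \in \mathcal{A}$) and a uniform bound on $\mathcal{H}^{2}(J_{u_{\rho}})$.

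Next, in order to cast the two-dimensional reduced integrand into a three-dimensional setting, I define $h : \mathbb{M}^{3 \times 3} \to [0, +\infty)$ by $h(F) := \mathcal{Q}W_{0}(F_{\alpha})$, where $F_{\alpha} := (F^{1} | F^{2})$ denotes the first two columns of $F$. I claim that $h$ is quasiconvex on $\mathbb{M}^{3 \times 3}$: given any cube $Q = Q_{2} \times I \subset \R^{3}$ and $\phi \in C_{c}^{\infty}(Q, \R^{3})$, the section $\phi(\cdot, x_{3})$ lies in $C_{c}^{\infty}(Q_{2}, \R^{3})$ for each $x_{3} \in I$, so the quasiconvexity of $\mathcal{Q}W_{0}$ on $\mathbb{M}^{3 \times 2}$ yields
\begin{equation*}
|Q_{2}| \, \mathcal{Q}W_{0}(F_{\alpha}) \leq \int_{Q_{2}} \mathcal{Q}W_{0} \bigl(F_{\alpha} + \nabla_{\alpha} \phi(x_{1}, x_{2}, x_{3}) \bigr) \, dx_{1} \, dx_{2}
\end{equation*}
for every $x_{3} \in I$; integrating in $x_{3}$ produces the quasiconvexity inequality for $h$ on $Q$. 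Moreover, using $\mathcal{Q}W_{0} \leq \mathcal{R}W_{0}$ together with the $p$-growth of $\mathcal{R}W_{0}$ recalled in Lemma~\ref{lemma rank one}, I get $0 \leq h(F) \leq C(1 + |F|^{p})$ on $\mathbb{M}^{3 \times 3}$.

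An Ambrosio-type lower-semicontinuity theorem for quasiconvex integrands in $GSBV^{p}$ (see, e.g., \cite[Chapter 5]{Ambrosio2000FunctionsOB}), whose applicability is ensured by the uniform $L^{p}$-bound on $\nabla u_{\rho}$ together with the uniform bound on $\mathcal{H}^{2}(J_{u_{\rho}})$ established above, then provides
\begin{equation*}
\int_{\Sigma_{1}} h(\nabla u) \, dx + \mathcal{H}^{2}(J_{u}) \leq \liminf_{\rho \to 0} \left[ \int_{\Sigma_{1}} h(\nabla u_{\rho}) \, dx + \mathcal{H}^{2}(J_{u_{\rho}}) \right].
\end{equation*}
Since $u \in \mathcal{A}$ satisfies $\partial_{3} u = 0$, the last column of $\nabla u$ vanishes and hence $h(\nabla u) = \mathcal{Q}W_{0}(\nabla_{\alpha} u)$, so the left-hand side coincides with $\mathcal{G}_{0}(u)$. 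Combining with the first paragraph, $\mathcal{G}_{0}(u) \leq L$, as desired. The main subtlety I expect is the lifting of the quasiconvexity of $\mathcal{Q}W_{0}$ from $\mathbb{M}^{3 \times 2}$ to $\mathbb{M}^{3 \times 3}$ (which however reduces to the Fubini-type slicing of test functions displayed above) together with the standard truncation argument needed to pass from the $SBV$ version of Ambrosio's lower-semicontinuity theorem to the $GSBV^{p}$ setting.
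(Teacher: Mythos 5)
Your proof is correct and follows essentially the same route as the paper's: both reduce via the chain $W(E\mid\xi)\geq W_0(E)\geq \mathcal{Q}W_0(E)$ and $\psi_\rho\geq 1$, use $(H_4)$ for the a~priori $L^p$-bound on the rescaled gradient and the $\mathcal{H}^2$-bound on the jump set, and then invoke lower semicontinuity of a quasiconvex free-discontinuity functional with respect to convergence in measure (the paper cites Theorem~\ref{relaxation gsbvp}, you cite Ambrosio's $GSBV^p$ lower-semicontinuity theorem, which is the underlying tool). The one point you make explicit that the paper leaves implicit is the lifting $h(F):=\mathcal{Q}W_0(F_\alpha)$ to $\mathbb{M}^{3\times3}$ together with the Fubini-slicing verification that $h$ is quasiconvex and inherits the $p$-growth from $\mathcal{R}W_0$; since $h$ is not $p$-coercive in the full variable $F$, this is indeed the cleaner way to justify the application of the lower-semicontinuity theorem, and it is a genuine clarification rather than a different argument.
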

	
	\begin{proof}
		If $\liminf_{\rho \to 0} \rho^{-1} \mathcal{G}_\rho(u_\rho)=+\infty$ there is nothing to prove. We can assume that $\liminf_{\rho \to 0} \rho^{-1} \mathcal{G}_\rho(u_\rho) < +\infty$. Up to a not relabeled subsequence we may assume that the liminf is a limit and that there exists $C>0$ such that $\sup_\rho\left\{\rho^{-1}\mathcal{G}_\rho(u_\rho)\right\} \leq C$. By ($H_4$) we can apply Ambrosio compactness Theorem \cite[Theorem 4.8]{Ambrosio2000FunctionsOB} and deduce the existence of $v \in \gsbvp(\Sigma_1,\R^3)$ such that, up to a further subsequence, $u_\rho \to v$ weakly in $\gsbvp(\Sigma_1,\R^3)$.
		Thus $u=v$ a.e. in $\Sigma_1$. Moreover, by ($H_4$), we have that for every $\rho>0$,
		\begin{align*}
			C \geq \rho^{-1} \mathcal{G}_\rho(u_\rho) \geq C_1 \int_{\Sigma_1} \left( |\nabla_\alpha u_\rho|^p+\frac{1}{\rho^p} |\partial_3 u_\rho|^p \right) \, dx -C_1.
		\end{align*}
		This, together with the weak convergence of $\nabla u_\rho$ to $\nabla u$ in $L^p(\Sigma_1,\mathbb{M}^{3 \times 3})$, implies that
		\begin{equation*}
			\int_{\Sigma_1} |\partial_3 u|^p \leq \liminf_{\rho \to 0} \int_{\Sigma_1} |\partial_3 u_\rho|^p=0.
		\end{equation*}
		Therefore, $\partial_3 u=0$ a.e. in $\Sigma_1$.
		
		For every $\tilde{\rho}>0$ it holds 
		\begin{align*}
			C & \geq \lim_{\rho \to 0} \rho^{-1} \mathcal{G}_\rho(u_\rho) \geq \liminf_{\rho \to 0} \int_{J_{u_\rho}} \psi_{\rho}(\nu_{u_\rho}) \, d \mathcal{H}^2 \geq \liminf_{\rho \to 0} \int_{J_{u_\rho}} \psi_{\tilde{\rho}}(\nu_{u_\rho}) \, d \mathcal{H}^2 \geq \int_{J_{u}} \psi_{\tilde{\rho}}(\nu_{u}) \, d \mathcal{H}^2 \\ & \geq \frac{1}{\tilde{\rho}} \int_{J_{u} } |(\nu_u)_3| \, d \mathcal{H}^2.
		\end{align*}
		From the arbitrariness of $\tilde{\rho}>0$ we infer that $(\nu_u)_3=0 \,$  $\mathcal{H}^2$-a.e. on $J_u$, hence $u$ does not depend on $x_3$ and $u \in \mathcal{A}$.
		
		Finally, by definition of $W_0$ in (\ref{reduced density}) we have
		\begin{align*}
			\liminf_{\rho \to 0} \rho^{-1}\mathcal{G}_\rho(u_\rho) & = \liminf_{\rho \to 0} \left( \int_{\Sigma_1} W(\nabla u_\rho) \, dx+ \mathcal{H}^2(J_{u_\rho}) \right) \\ & \geq \liminf_{\rho \to 0}  \int_{\Sigma_1} W_0(\nabla_\alpha u_\rho) \, dx+ \mathcal{H}^2(J_{u_\rho}) \\ & \geq \liminf_{\rho \to 0}  \int_{\Sigma_1} \mathcal{Q}W_0(\nabla_\alpha u_\rho) \, dx+ \mathcal{H}^2(J_{u_\rho}) \geq \mathcal{G}_0(u), 
		\end{align*}
		where in the last inequality we have used Theorem \ref{relaxation gsbvp}.
	\end{proof}

	We conclude this section by recalling some properties of $W_0$ (see \cite[Lemma 2.4]{hafsa2006nonlinear}) which will be used later on in the paper.
	
	\begin{lem}\label{reduced density lemma}
		Let $W: \mathbb{M}^{3 \times 3} \to [0,+\infty]$ satisfy \textnormal{($H_1$)}--\textnormal{($H_5$)}. Then, the following facts hold:
		\begin{enumerate}
			\item[\textnormal{(i)}] $W_0$ is continuous as an extended valued function and satisfies \textnormal{($H_4$)} with the same $C_1$ and $p$ of $W$;
			\item[\textnormal{(ii)}] for $A \in \mathbb{M}^{3 \times 2}$, $W_0(A)= +\infty$ if and only if $A^1 \wedge A^2 =0$,
			\item[\textnormal{(iii)}] for every $\delta >0$, there exists $c_\delta>0$ such that for every $A \in \mathbb{M}^{3 \times 2}$, if $|A^1 \wedge A^2| \geq \delta$, then $W_0(A) \leq c_\delta(1+|A|^p)$.
		\end{enumerate}
	\end{lem}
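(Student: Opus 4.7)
The plan is to treat the three items in order, starting with (ii) and (iii), which are both immediate consequences of a distinguished choice of competitor in the infimum $W_0(A)=\inf_{\xi\in\R^3}W(A|\xi)$.

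For (ii), I would observe that when $A^1\wedge A^2\neq 0$ the vector $\xi_\star := A^1\wedge A^2$ completes the columns of $A$ to a basis with $\det(A|\xi_\star)=|A^1\wedge A^2|^2>0$, so (H1) and (H3) yield $W(A|\xi_\star)<+\infty$, hence $W_0(A)<+\infty$. Conversely, if $A^1\wedge A^2=0$ then for every $\xi\in\R^3$ the matrix $(A|\xi)$ has vanishing determinant, so by (H3) one has $W(A|\xi)=+\infty$ for all $\xi$, and the infimum is $+\infty$. For (iii) I would plug the unit competitor $\xi:=(A^1\wedge A^2)/|A^1\wedge A^2|$ into the infimum: then $|\xi|=1$ and $\det(A|\xi)=|A^1\wedge A^2|\geq\delta$, so applying (H5) with threshold $\delta$ and absorbing the trivial estimate $|(A|\xi)|^p\leq 2^{p-1}(|A|^p+1)$ into a larger constant yields the desired bound $W_0(A)\leq c'_\delta(1+|A|^p)$.

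For (i), the coercivity assertion is routine: from $W(A|\xi)\geq C_1|(A|\xi)|^p-1/C_1\geq C_1|A|^p-1/C_1$ for every $\xi$, passing to the infimum yields the (H4)-type estimate for $W_0$ with the same constants $C_1$ and $p$. Upper semicontinuity of $W_0$ is automatic since, by (H1), $W_0$ is a pointwise infimum of the family of continuous $[0,+\infty]$-valued functions $A\mapsto W(A|\xi)$. For lower semicontinuity at a point $A$ with $W_0(A)<+\infty$, along any sequence $A_n\to A$ I would select quasi-minimizers $\xi_n\in\R^3$ satisfying $W(A_n|\xi_n)\leq W_0(A_n)+1/n$. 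The coercivity in (H4) applied to $(A_n|\xi_n)$ bounds $\{\xi_n\}$ in $\R^3$, so along a subsequence $\xi_n\to\xi$, and continuity of $W$ into $[0,+\infty]$ gives $W_0(A)\leq W(A|\xi)\leq\liminf_n W_0(A_n)$.

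The hard part will be establishing continuity at a matrix $A$ with $A^1\wedge A^2=0$, where the aim is to show $W_0(A_n)\to+\infty$ whenever $A_n\to A$ and $A_n^1\wedge A_n^2\neq 0$. I plan to argue by contradiction: if some subsequence had $W_0(A_n)\leq M$, the same quasi-minimizer argument would extract $\xi_n$ bounded by (H4), then along a further subsequence $\xi_n\to\xi$ with $\det(A|\xi)=\lim_n\det(A_n|\xi_n)\geq 0$. But $A^1\wedge A^2=0$ forces $\det(A|\xi)=0$, so by (H3), $W(A|\xi)=+\infty$, and the continuity of $W$ into $[0,+\infty]$ from (H1) would contradict $W(A_n|\xi_n)\leq M+1$. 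It is precisely the strength of (H1), namely continuity into the compactified target $[0,+\infty]$ rather than merely into $\R$ where $W$ is finite, that makes this final step go through.
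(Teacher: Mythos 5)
Your proof is correct and complete. Note that the paper does not actually prove this lemma but simply cites it as \cite[Lemma 2.4]{hafsa2006nonlinear}, so there is no in-paper proof to compare against; your argument is the natural one and handles all the necessary cases. In particular: (ii) and (iii) follow cleanly from the competitor $\xi=A^1\wedge A^2$ (so that $\det(A|\xi)=|A^1\wedge A^2|^2$) combined with (H3) and (H5); the coercivity in (i) is immediate from (H4) and $|(A|\xi)|\geq|A|$; upper semicontinuity is correctly obtained as an infimum of continuous $[0,+\infty]$-valued functions; and the two cases of lower semicontinuity — $W_0(A)<+\infty$ and $A^1\wedge A^2=0$ — are both handled by extracting a convergent subsequence of quasi-minimizers via the (H4) bound and then invoking continuity of $W$ into $[0,+\infty]$, together with the fact that $\det(A|\xi)=(A^1\wedge A^2)\cdot\xi$ so that $A^1\wedge A^2=0$ forces a degenerate limit.
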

	
	\subsection{An intermediate step: $\Gamma$-limsup inequality in terms of an auxiliary functional}
	We start by estimating the $\Gamma$-limsup of the functionals $\rho^{-1}\mathcal{G}_\rho$ in terms of an auxiliary functional defined on a subset of the functions in $\gsbvp(\Sigma_1,\R^3)$ not dependent on the third variable. To this aim, we start by fixing some notations. 
	
	Let $\Sigma \subset \R^2$ be an open set and let $u$ be a function defined on $\Sigma_1=\Sigma \times (-1/2,1/2)$ not depending on the third variable. In the following, to short the notation, we will denote with $u$ also the corresponding trace of $u$ on $\Sigma$. Set $Y \subset \gsbvp(\Sigma_1,\R^3)$ as
	\begin{align*}
		Y:= \Big\{ u \in \gsbvp(\Sigma_1,\R^3): & \ u(x_1,x_2,x_3)=u|_\Sigma(x_1,x_2), \, u \in \Aff^*(\Sigma \setminus \overline{J_u},\R^3) \cap \widehat{\mathcal{W}}(\Sigma,\R^3) \Big\},
	\end{align*}
	observe also that is immediate to see that $\mathcal{H}^2(J_u)=\mathcal{H}^1(J_u \cap \Sigma)$ for every $u \in Y$. 
	
	We introduce the functional
	\begin{equation}
		\label{e:G0-def}
		\mathcal{G}_0^w(u) =
		\begin{cases*}
			\displaystyle \int_{\Sigma_1} W_0(\nabla u) \, dx +\mathcal{H}^2(J_u) & if $u \in Y$, \\
			+\infty & otherwise.
		\end{cases*}
	\end{equation}
	
	\begin{thm}\label{gamma limsup prop}
		Let $u \in  \gsbvp(\Sigma_1,\R^3)$. Then 
		\begin{equation}\label{G_limsup relaxed}
			\Gamma-\limsup_{\rho \to 0} \rho^{-1} \mathcal{G}_\rho (u) \leq \overline{\mathcal{G}_0^w}(u)
		\end{equation} 
		where $\overline{\mathcal{G}_0^w}$ is the relaxation of $\mathcal{G}_0^w$.
	\end{thm}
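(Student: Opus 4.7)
The strategy exploits the lower semicontinuity of the $\Gamma$-$\limsup$ with respect to convergence in measure: since $\overline{\mathcal{G}_0^w}$ is, by definition, the greatest lsc functional below $\mathcal{G}_0^w$, it is enough to establish
$$\Gamma\text{-}\limsup_{\rho \to 0} \rho^{-1}\mathcal{G}_\rho(u) \leq \mathcal{G}_0^w(u) \qquad \text{for every $u \in Y$.}$$
For any other $u \in L^0(\Sigma_1,\R^3)$ with $\overline{\mathcal{G}_0^w}(u)<+\infty$, one picks a sequence $u_k \in Y$ with $u_k \to u$ in measure and $\mathcal{G}_0^w(u_k) \to \overline{\mathcal{G}_0^w}(u)$, and concludes by a standard diagonal extraction.

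Now fix $u \in Y$. Since $u \in \Aff^*(\Sigma \setminus \overline{J_u},\R^3)$, the maximal-rank condition forces $|(\nabla u)^1 \wedge (\nabla u)^2| \geq c_0 > 0$ uniformly on every triangle of the triangulation, whence $W_0(\nabla u) \in L^\infty \cap L^p$ by Lemma \ref{reduced density lemma}. The recovery sequence will have the form
$$u_\rho(x_1,x_2,x_3) := \hat u_\delta(x_1,x_2) + \rho\, x_3 \, b_\delta(x_1,x_2),$$
with $\hat u_\delta \approx u$ and a measurable field $b_\delta$ such that $\det(\nabla \hat u_\delta \mid b_\delta) \geq \delta_0 > 0$ almost everywhere and $W(\nabla \hat u_\delta \mid b_\delta)$ is close to the pointwise infimum defining $W_0(\nabla u)$. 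To obtain this, first one applies the piecewise-affine diffeomorphism $\Phi_\delta$ of Lemma \ref{aff diffeo lemma} to each connected component of $\overline{J_u}$, mapping $\Sigma \setminus \overline{J_u}$ onto a Lipschitz domain $V_\delta$ with $\Phi_\delta \to \mathrm{Id}$ in $W^{1,\infty}$. On $V_\delta$, the pushforward $v_\delta := u \circ \Phi_\delta^{-1}$ is Lipschitz with maximal-rank gradient, and can be approximated by $C^1$-local immersions on regular subdomains via Lemma \ref{trabelsi lemma modified}. Then, adapting the constructions of \cite[Proposition 5.3]{hafsa2006nonlinear} in the form given by Lemmas \ref{lem interchange subset} and \ref{selection}, one selects a measurable field $\tilde b_\delta$ on $V_\delta$ whose Clarke-subdifferential augmentation of $\nabla v_\delta$ satisfies a uniform lower bound on the determinant, and for which $W(\nabla v_\delta \mid \tilde b_\delta)$ is a near-optimizer of the infimum defining $W_0(\nabla v_\delta)$. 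Pulling back through $\Phi_\delta$ yields $\hat u_\delta$ and $b_\delta$.

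For the verification: on the one hand, $J_{u_\rho} = J_{\hat u_\delta} \times (-\tfrac12,\tfrac12)$ and its approximate normal lies in $\mathrm{span}(e_1,e_2)$, so $\psi_\rho(\nu_{u_\rho}) \equiv 1$ and $\int_{J_{u_\rho}} \psi_\rho(\nu_{u_\rho})\,d\mathcal{H}^2 = \mathcal{H}^1(J_{\hat u_\delta}) \to \mathcal{H}^2(J_u)$ as $\delta \to 0$, using property (iii) of Lemma \ref{aff diffeo lemma}. On the other hand, $\nabla_\alpha u_\rho = \nabla \hat u_\delta + \rho x_3 \nabla b_\delta$ and $\tfrac{1}{\rho}\partial_3 u_\rho = b_\delta$, so the uniform lower bound on $\det(\nabla \hat u_\delta \mid b_\delta)$ and $(H_5)$ give a $p$-growth control of $W$ along the sequence, and dominated convergence yields
$$\int_{\Sigma_1} W\!\left(\nabla_\alpha u_\rho \,\Big|\, \tfrac{1}{\rho}\partial_3 u_\rho\right)dx \xrightarrow[\rho \to 0]{} \int_{\Sigma} W(\nabla \hat u_\delta \mid b_\delta)\,dx,$$
which by the near-optimality of $b_\delta$ is within $\varepsilon$ of $\int_\Sigma W_0(\nabla u)\,dx$. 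A final diagonal extraction in $\delta$ and in the selection error $\varepsilon$ concludes.

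The delicate point is the construction of $b_\delta$ with a uniform positive lower bound on the determinant: because the $\Aff^*$ condition only prescribes positive determinant on the full Clarke subdifferential, one must carefully handle the vertices and interfaces of the triangulation, where $\nabla u$ is set-valued and small perturbations via $\Phi_\delta$ can deteriorate the lower bound. It is precisely the uniformity in \eqref{phii_bound}, together with an approximate optimization argument in the spirit of \cite{benbe1996}, that makes the selection compatible with $(H_5)$ and ensures the $W$-energy remains controlled.
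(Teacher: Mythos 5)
Your proposal follows essentially the same strategy as the paper: reduce to $u \in Y$ via lower semicontinuity of the $\Gamma$-limsup, map $\Sigma\setminus\overline{J_u}$ to a Lipschitz domain through the piecewise affine diffeomorphism $\Phi_\delta$ of Lemma~\ref{aff diffeo lemma}, approximate the pushforward by $C^1$-local immersions via Lemma~\ref{trabelsi lemma modified}, select the third column with Lemma~\ref{selection}, build the recovery sequence of the form $\hat u + \rho x_3 b$, and pass to the limit first in $\rho$, then in the approximation index, controlling jump and bulk separately. Two minor imprecisions: the function $b_\delta$ coming from Lemma~\ref{selection} is a smooth $C^\infty\cap W^{1,\infty}$ field, not merely measurable (this regularity is needed to compute $\nabla_\alpha u_\rho$ and to invoke dominated convergence for $\rho\to 0$ at fixed $j$); and you only need the inclusion $J_{\hat u_\delta}\subseteq J_u$, giving $\mathcal{H}^1(J_{\hat u_\delta})\leq\mathcal{H}^2(J_u)$ directly, rather than a convergence $\mathcal{H}^1(J_{\hat u_\delta})\to\mathcal{H}^2(J_u)$, which is neither needed nor clearly justified.
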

	
	The proof of Theorem \ref{gamma limsup prop} needs some preliminary work.

	We now state a result that provides a suitable smooth approximation for functions in $\Aff^*$ in the spirit of \cite[Lemma 7]{benbe1996}.

	\begin{lem}\label{trabelsi lemma modified}
		Let $U$ be an open bounded subset of $\R^2$ with Lipschitz boundary and let $v \in \textnormal{Aff}^*(U,\R^3)$.
		Then, there exists a sequence $v_j \in C^1(\overline{U},\R^3)$ with the following properties:
		\begin{enumerate}
			\item[\textnormal{(i)}] $v_j \to v$ in $W^{1,p}(U,\R^3)$ and $\Vert \nabla v_j \Vert_{L^\infty} \leq 2 \Vert \nabla v \Vert_{L^\infty}$;
			\item[\textnormal{(ii)}] there exists $\theta >0$ such that for every $j \geq 1$
			\begin{equation}\label{theta+}
				|\partial_1 v_j(x) \wedge \partial_2 v_j(x)| \geq \theta \ \ \ \mbox{for every $x \in \overline{U}$}.
			\end{equation}
		\end{enumerate}
	\end{lem}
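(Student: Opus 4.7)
The plan is to obtain $v_j$ by mollifying a suitable piecewise affine extension of $v$. The driving observation is that if $\tilde v$ is piecewise affine on a triangulation $\{T_i\}_i$ with Jacobians $A_i$ on $T_i$, then for a standard smooth radial mollifier $\eta_\epsilon$ one has
\begin{equation*}
\nabla(\tilde v * \eta_\epsilon)(x) \;=\; \sum_i \lambda_i(x)\, A_i,\qquad \lambda_i(x):=\int_{T_i}\eta_\epsilon(x-y)\,dy\geq 0,\quad \sum_i\lambda_i(x)=1,
\end{equation*}
so that $\nabla(\tilde v * \eta_\epsilon)(x)$ is a convex combination of the $A_i$ corresponding to simplices meeting $B_\epsilon(x)$. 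If $\epsilon$ is small enough, all such simplices share either a common vertex or a common edge. Their Jacobians then belong to a single Clarke subdifferential $\partial\tilde v(z)$, and the hypothesis $v\in\Aff^*(U,\R^3)$ yields $\det(A^TA)\geq\eta_0:=\min\{\det(A^TA):A\in\partial v(z),\,z\in U\}>0$ for any such convex combination.

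First I would construct an extension $\tilde v$ of $v$ to a neighborhood $U'$ of $\overline U$, piecewise affine on a finite triangulation $\tilde{\mathcal T}$ refining the triangulation $\mathcal T$ of $v$, and such that (a) no Jacobian beyond the original $\{A_1,\dots,A_N\}$ appears, and (b) near $\overline U$ the adjacency structure of $\mathcal T$ is preserved. The finiteness of $\mathcal T$ and the Lipschitz regularity of $\partial U$ make this feasible: one continues each boundary piece of $\mathcal T$ as its own affine extension on a thin outward strip adjacent to $T_i\cap\partial U$, and conflicts at boundary vertices of $\mathcal T$ are resolved by a radial sectorial refinement. In this way one can ensure $\|\nabla\tilde v\|_{L^\infty}=\|\nabla v\|_{L^\infty}$, and one obtains a radius $r_0>0$ such that for every $x\in\overline U$ and every $\epsilon<r_0$ the simplices of $\tilde{\mathcal T}$ meeting $B_\epsilon(x)$ share a common vertex or edge $z\in\overline U$.

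Next I would set $v_j:=\tilde v*\eta_{\epsilon_j}$, with $\epsilon_j\to 0^+$ and $\epsilon_j<r_0$. Then $v_j\in C^\infty(\overline U,\R^3)$. The standard properties of convolution yield $v_j\to v$ in $W^{1,p}(U,\R^3)$, and the inequality $\|\nabla v_j\|_{L^\infty}\leq\|\nabla\tilde v\|_{L^\infty}\leq 2\|\nabla v\|_{L^\infty}$ establishes (i). For (ii), using the key observation above, for every $x\in\overline U$ the matrix $\nabla v_j(x)$ is a convex combination of Jacobians of simplices meeting $B_{\epsilon_j}(x)$, all of which share a common vertex or edge $z\in\overline U$; hence $\nabla v_j(x)\in\partial\tilde v(z)=\partial v(z)$ and
\begin{equation*}
|\partial_1 v_j(x)\wedge\partial_2 v_j(x)|^2 \;=\; \det\bigl(\nabla v_j(x)^T\nabla v_j(x)\bigr)\;\geq\;\eta_0,
\end{equation*}
which gives (\ref{theta+}) with $\theta:=\sqrt{\eta_0}$.

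The main obstacle is the extension step: one must build $\tilde v$ so that, at every $x\in\overline U$, the mollification only combines Jacobians coming from pieces that are mutually adjacent in the original triangulation, so that all relevant convex combinations land in a single Clarke subdifferential and inherit the lower bound $\det(A^TA)\geq\eta_0$. Away from $\partial U$ this is automatic from the finiteness of $\mathcal T$; near $\partial U$ it requires exploiting the Lipschitz regularity of the boundary to design the outward strip and to resolve corner conflicts without enlarging the Jacobian set. All the other steps are direct consequences of standard mollification estimates.
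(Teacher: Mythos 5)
Your proposal takes a genuinely different route from the paper: you extend $v$ piecewise affinely \emph{outward} past $\partial U$ and then mollify on $\overline U$, whereas the paper never extends at all. The paper sets $u_j := \rho_{\varepsilon_j} * v$ on the shrunken domain $(U)_{-2\varepsilon_j}:=\{x\in U:\dist(x,\partial U)>2\varepsilon_j\}$, chooses $\varepsilon_j$ so small that for each $x\in(U)_{-2\varepsilon_j}$ the ball $B_{\varepsilon_j}(x)$ has a point $z_x$ with $\nabla v(y)\in\partial v(z_x)$ for all $y$ in the ball, and then precomposes with a $C^1$-diffeomorphism $\phi_j:U\to\phi_j(U)\subseteq(U)_{-2\varepsilon_j}$ with $\|\phi_j-\textnormal{Id}\|_{W^{1,\infty}(U)}\to 0$ (this exists because $\partial U$ is Lipschitz and is the source of the factor $2$ in (i)). Crucially the witness $z_x$ lies in $B_{\varepsilon_j}(x)\subseteq(U)_{-\varepsilon_j}\subset U$, so the bound in the definition of $\Aff^*(U,\R^3)$ --- which is a minimum over $z\in U$, not $z\in\overline U$ --- applies directly.

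Your argument has a genuine gap at precisely the step you flag as the main obstacle. Constructing a piecewise affine extension $\tilde v$ to a neighborhood $U'\supset\overline U$ that (a) introduces no Jacobians beyond the original $\{A_1,\dots,A_N\}$ and (b) preserves adjacency near $\partial U$ does not follow from Lipschitz regularity of $\partial U$: the standard Sobolev extension operators are not piecewise affine and do not control the Jacobian set, and there is no off-the-shelf piecewise affine extension theorem of this kind. Concretely, at a boundary point $p\in\partial U$ touched by two pieces $T_i\cap U$ and $T_j\cap U$ that are not mutually adjacent through an interior face, their affine continuations into $\R^2\setminus U$ generically leave an angular gap (or overlap), and filling the gap with a new affine piece forces a new Jacobian, so the ``radial sectorial refinement'' you invoke breaks requirement (a).

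There is a second, more subtle problem even granting the extension. You produce a common vertex or edge $z\in\overline U$ of the simplices meeting $B_{\epsilon_j}(x)$ and then bound $\det(\nabla v_j(x)^T\nabla v_j(x))$ by $\eta_0:=\min\{\det(A^TA):A\in\partial v(z),\,z\in U\}$. But your $z$ may lie on $\partial U$, and $\Aff^*(U,\R^3)$ gives no bound there. If three pieces $T_1,T_2,T_3$ meet at a vertex $z_0\in\partial U$ with $T_1$ and $T_3$ adjacent only through $z_0$, then $\partial v(z_0)=\textnormal{conv}\{A_1,A_2,A_3\}$ is strictly larger than every $\partial v(z)$ with $z\in U$ (which near $z_0$ are at most segments $\textnormal{conv}\{A_1,A_2\}$ or $\textnormal{conv}\{A_2,A_3\}$), and the non-concave, degree-four map $A\mapsto\det(A^TA)$ can dip below $\eta_0$ on that larger convex hull. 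Both issues disappear in the paper's shrink-then-reparametrize scheme, which you may want to adopt; it also avoids having to build any extension at all.
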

	
	\begin{proof}

		For $\varepsilon>0$ let us denote	$$(U)_{-\varepsilon}:=\left\{ x \in U: \ \dist(x,\partial U) > \varepsilon \right\}.$$
		Let $\varepsilon_j \to 0$ as $j \to +\infty$ such that for every $x \in (U)_{-2\varepsilon_j}$ there exists $z_x \in B_{\varepsilon_j}(x)$ satisfying $\nabla v(y) \in \partial v(z_x)$ for every $y \in B_{\varepsilon_j}(x)$.

		
		Let $\left\{ \rho_\tau \right\}_{\tau>0}$ be a family of standard mollifiers.
		We define $u_j:=\rho_{\varepsilon_j} * v \in C^1((U)_{-2\varepsilon_j},\R^3)$.
		We clearly have $\Vert \nabla u_j \Vert_{L^\infty} \leq \Vert \nabla v \Vert_{L^\infty}$. 
		Since $v \in \Aff^*(U,\R^3)$, we can fix $\eta>0$ such that $|A^1 \wedge A^2| \geq \eta$ for every $A \in \partial v(x)$, for every $x \in U$.
		By the properties of convolution, we have that for every $x \in (U)_{-2\varepsilon_j}$, $\nabla u_j(x) \in \partial u(z_x)$. This in turn implies that $|\partial_1 u_j(x) \wedge \partial_2 u_j(x)| \geq \eta$ for every $x \in  (U)_{-2\varepsilon_j}$. By uniform continuity of $\nabla u_j$, we can extend this property to $ \overline{(U)_{-2\varepsilon_j}}$. 

		Since $U$ has Lipschitz boundary, there exists a sequence of diffeomorphisms $\phi_j \in C^1(U,U)$ such that $\phi_j(U) \subseteq (U)_{-2\varepsilon_j}$ and $\Vert \phi_j - \textnormal{Id} \Vert_{W^{1,\infty}(U)} \to 0$ as $j \to +\infty$. Setting $v_j(x):=u_j(\phi_j(x)) $, we have $v_j \in C^1(U,\R^3)$, $\Vert v_j \Vert_{L^\infty} \leq \Vert v \Vert_{L^\infty}$ and $\Vert \nabla v_j \Vert_{L^\infty}  \leq 2\Vert \nabla v \Vert_{L^\infty}$. Furthermore, for $j$ large enough it holds
		\begin{align*}
			|\partial_1 v_j(x) \wedge \partial_2 v_j(x)|=\det(D\phi_j(x))|\partial_1 u_j(\phi_j(x)) \wedge \partial_2 u_j(\phi_j(x))| \geq \frac{\eta}{2} \ \ \mbox{for every $x \in \overline{U}$}.
		\end{align*}
		Therefore, \eqref{theta+} is satisfied with $\theta:=\eta/2$.
		
	\end{proof}

	In the following three lemmas we give some results about the representation of the integral of $W_0$.
	
	\noindent Let us start giving a general result about $W$ and its reduced counterpart $W_0$.
	
	\begin{lem}\label{lem interchange}
		Let $\alpha,K>0$ and set
		$$
		\Lambda (\alpha,K):= \left\{ A \in \mathbb{M}^{3 \times 2} : \ |A^1 \wedge A^2| \geq \alpha, \ |A| \leq K \right\}.
		$$
		Then, there exists $\beta=\beta(\alpha,K)>0$ such that for every $A \in \Lambda(\alpha,K)$ we have 
		\begin{equation}\label{99}
			W_0(A)=\min \left\{ W(A|\zeta): \ \zeta \in \R^3 , \ \det(A|\zeta) \geq \frac{1}{\beta}, \  \ |\zeta| \leq \beta \right\}.
		\end{equation}
		
	\end{lem}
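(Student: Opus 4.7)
The plan is to show that the infimum defining $W_0(A)$ is attained on a compact region of admissible $\zeta$'s whose size depends only on $\alpha$ and $K$. First I would produce a uniform a priori upper bound $W_0(A)\le C_0(\alpha,K)$ on $\Lambda(\alpha,K)$ by testing with the natural competitor $\zeta_0:=(A^1\wedge A^2)/|A^1\wedge A^2|$, which is well defined since $|A^1\wedge A^2|\ge \alpha$. This satisfies $|\zeta_0|=1$ and $\det(A|\zeta_0)=|A^1\wedge A^2|\ge \alpha$, so $|(A|\zeta_0)|\le K+1$, and hypothesis $(H_5)$ applied with $\delta=\alpha$ gives
\[
W(A|\zeta_0)\le c_\alpha\bigl(1+(K+1)^p\bigr)=:C_0(\alpha,K).
\]
In particular $W_0(A)\le C_0$ uniformly for $A\in\Lambda(\alpha,K)$.

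Next, for every $A\in\Lambda(\alpha,K)$ a minimizing sequence $\zeta_n$ for $\zeta\mapsto W(A|\zeta)$ eventually satisfies $W(A|\zeta_n)\le C_0+1$, hence by $(H_4)$
\[
C_1|\zeta_n|^p - C_1^{-1} \le W(A|\zeta_n)\le C_0+1,
\]
which yields $|\zeta_n|\le \beta_1=\beta_1(\alpha,K)$. Extracting a convergent subsequence $\zeta_n\to\zeta^*$ and using the continuity of $W$ as an extended valued function ($H_1$), we obtain $W(A|\zeta^*)=W_0(A)\le C_0$, and $(H_3)$ forces $\det(A|\zeta^*)>0$. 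Hence the infimum is attained at some $\zeta^*$ with $|\zeta^*|\le \beta_1$.

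The delicate step is to show that $\det(A|\zeta^*)$ is bounded below by a positive constant uniformly over $\Lambda(\alpha,K)$. I would argue by contradiction: if there existed $A_n\in\Lambda(\alpha,K)$ and corresponding minimizers $\zeta_n^*$ with $\det(A_n|\zeta_n^*)\to 0$, then by compactness of $\Lambda(\alpha,K)$ and the uniform bound $|\zeta_n^*|\le \beta_1$, up to subsequence we could extract $A_n\to A_\infty\in\Lambda(\alpha,K)$ and $\zeta_n^*\to\zeta_\infty$ with $\det(A_\infty|\zeta_\infty)=0$. By $(H_3)$ this yields $W(A_\infty|\zeta_\infty)=+\infty$, and the continuity of $W$ as an extended valued function ($H_1$) forces $W(A_n|\zeta_n^*)\to +\infty$, contradicting $W(A_n|\zeta_n^*)=W_0(A_n)\le C_0$. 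Combining this lower bound on $\det$ with $\beta_1$ produces the single constant $\beta$ in the statement. The main obstacle is precisely this last contradiction step: $(H_5)$ gives no quantitative control as $\det\to 0^+$, so only the combination of the extended-valued continuity of $W$, the blow-up on $\{\det\le 0\}$ enforced by $(H_3)$, and the compactness of $\Lambda(\alpha,K)$ allows one to close the argument.
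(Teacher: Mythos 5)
Your proof is correct and follows essentially the same strategy as the paper's: first a uniform bound on $|\zeta|$ via $(H_4)$ and $(H_5)$ with the competitor $\zeta_0=(A^1\wedge A^2)/|A^1\wedge A^2|$, then a compactness-and-contradiction argument combining the continuity of $W$ from $(H_1)$ with the blow-up on $\{\det\le 0\}$ enforced by $(H_3)$ to obtain the lower bound on the determinant. The only minor variation is that you close the contradiction using the uniform bound $W_0(A_n)\le C_0$ established at the outset (and work with actual minimizers, justifying the ``$\min$'' directly), whereas the paper argues with near-minimizers satisfying $W(A|\zeta)\le W_0(A)+1$ and invokes the continuity of $W_0$ from Lemma~\ref{reduced density lemma}; both routes are sound.
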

	
	\begin{proof}
		By definition of $W_0$, to prove \eqref{99} we show that there exists $\beta=\beta(\alpha,K)>0$ such that for every $A \in \Lambda(\alpha,K)$, the following holds: 
		\begin{equation}\label{199}
			\mbox{for every $\zeta \in \R^3: \ W(A|\zeta) \leq W_0(A)+1$} \ \Longrightarrow \ \det(A|\zeta) \geq \frac{1}{\beta} \ \mbox{and} \ |\zeta| \leq \beta.
		\end{equation}
		
		Let $A \in \Lambda(\alpha,K)$ and $\zeta \in \R^3$ be such that $W(A|\zeta) \leq W_0(A)+1$. By the assumptions ($H_4$) and ($H_5$) on $W$, we have that
		\begin{align*}
			C_1(|\zeta|^p-1)  \leq W(A|\zeta) \leq W_0(A)+1 \leq W\left( A \, \bigg| \, \frac{A^1 \wedge A^2}{|A^1 \wedge A^2|} \right)+1   \leq c_\alpha (|A|^p+2)+1.
		\end{align*}
		This implies that $|\zeta| \leq \beta_1:=(c_\alpha K^p+2c_\alpha+C_1+1)^{1/p}$.
		
		As for the lower bound on the determinant, let us assume by contradiction that for every $n \in \N$ there exist $A_n \in \Lambda(\alpha,K)$ and $\zeta_n \in \R^3$ such that $W(A_n|\zeta_n) \leq W_0(A_n)+1$ and $\det(A_n|\zeta_n) \leq 1/n$. By the previous step we have that $|\zeta_n| \leq \beta_1$ for every $n \geq 1$. By compactness of $\Lambda(\alpha,K)$, there exist $A \in \Lambda(\alpha,K)$ and $\zeta \in \R^3$ such that, up to a not relabeled subsequence, $A_n \to A$ and $\zeta_n \to \zeta$. This implies that 
		\begin{equation}\label{200}
			0=\lim_{n \to +\infty} \det(A_n|\zeta_n)=\det(A|\zeta).
		\end{equation}
		By continuity of $W_0$ (see Lemma \ref{reduced density lemma}) we have $W_0(A_n) \to W_0(A)$. Since $A \in \Lambda(\alpha,K)$, we have that $W_0(A) < +\infty$. On the other hand, in view of \eqref{200} and of hypotheses ($H_1$) and ($H_3$) on $W$ we get 
		$$
		W_0(A)+1 = \lim_{n \to +\infty} W_0(A_n) +1 \geq \lim_{n \to +\infty} W(A_n|\zeta_n) = +\infty,
		$$
		which is a contradiction. Hence, there exists $\beta_2>0$ such that for every $A \in \Lambda(\alpha,K)$ and $\zeta \in \R^3$ with $W(A|\zeta) \leq W_0(A)+1$, it holds $\det(g(x)|\zeta) \geq 1/\beta_2$. Setting $\beta:=\max \left\{ \beta_1,\beta_2 \right\}$ we infer \eqref{199} and hence \eqref{99}.
	\end{proof}
	
	
	\begin{lem}\label{lem anza-hafsa}
		Let $\Omega \subset \R^2$ be an open bounded set, $\eta>0$, and $G: \overline{\Omega} \to \mathbb{M}^{3 \times 2}$ be an uniformly continuous function such that $|G^1(x) \wedge G^2(x)| \geq \eta$ for every $x \in \overline{\Omega}$. Let $\Lambda_j: \overline{\Omega} \rightrightarrows \R^3$ be the multifunction defined for $j \geq 1$ by
		$$
		\Lambda_j(x):= \left\{ \zeta \in \R^3 : \ \det(G(x)|\zeta) \geq \frac{1}{j} \ \ \mbox{and} \ \ |\zeta| \leq j \right\} \ \ \mbox{for $x \in \overline{\Omega}$}.
		$$ 
		Then, there exists $j(\eta,\Vert G \Vert_{L^\infty})$ such that for every $j \geq j(\eta,\Vert G \Vert_{L^\infty})$ we have
		\begin{equation}\label{120}
			\inf_{\phi \in C^\infty(\overline{\Omega},\Lambda_j)} \int_\Omega W(G(x)|\phi(x)) \, dx = \int_\Omega \ \inf_{\zeta \in \Lambda_j(x) } W(G(x)|\zeta) \, dx.
		\end{equation} 
	\end{lem}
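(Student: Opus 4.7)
The inequality ``$\geq$'' is immediate, since for any admissible $\phi$ we have $W(G(x)|\phi(x)) \geq \inf_{\zeta \in \Lambda_j(x)} W(G(x)|\zeta)$ pointwise, and integrating preserves the bound. For the reverse inequality, I would set $j(\eta, \|G\|_{L^\infty}) := \beta(\eta, \|G\|_{L^\infty})$ from Lemma~\ref{lem interchange}, so that for every $j \geq j(\eta, \|G\|_{L^\infty})$ the pointwise problem reduces to $W_0$:
$$
\inf_{\zeta \in \Lambda_j(x)} W(G(x)|\zeta) = W_0(G(x)),
$$
and the infimum is attained. The crucial geometric observation is that $\det(G(x)|\zeta) = \zeta \cdot (G^1(x) \wedge G^2(x))$ is affine in $\zeta$, so $\Lambda_j(x)$ is the intersection of a closed half-space with a closed ball, hence a compact \emph{convex} set; uniform continuity of $G$ further ensures that $x \mapsto \Lambda_j(x)$ is continuous in the Hausdorff metric. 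A standard measurable selection theorem applied to the continuous integrand $(x,\zeta) \mapsto W(G(x)|\zeta)$ then produces a measurable $\phi^\ast : \overline\Omega \to \R^3$ with $\phi^\ast(x) \in \Lambda_j(x)$ and $W(G(x)|\phi^\ast(x)) = W_0(G(x))$ a.e.; Lemma~\ref{reduced density lemma}(iii) provides the integrable majorant $c_\eta(1+|G|^p)$.

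The heart of the proof is to regularize $\phi^\ast$ into a $C^\infty$ selection of $\Lambda_j$ without losing significant energy. Given $\varepsilon > 0$, I would introduce the strictly shrunken multifunction
$$
\Lambda_j^\delta(x) := \left\{ \zeta \in \R^3 : \zeta \cdot (G^1(x) \wedge G^2(x)) \geq 1/j + \delta,\ |\zeta| \leq j - \delta \right\},
$$
which remains nonempty and convex for $\delta > 0$ small (using $|G^1 \wedge G^2| \geq \eta$). A compactness-and-continuity argument shows that the pointwise infimum of $W(G(x)|\cdot)$ on $\Lambda_j^\delta(x)$ converges to $W_0(G(x))$ uniformly as $\delta \to 0$, so a measurable selection $\phi^{\ast\ast}$ of the shrunken problem has energy within $\varepsilon$ of $\int_\Omega W_0(G)$. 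Then I would apply Lusin's theorem to extract a continuous approximation of $\phi^{\ast\ast}$ on a compact set $K$ of large measure, extend continuously to $\overline\Omega$ via Tietze while staying valued in $\Lambda_j^{\delta/2}(\cdot)$ (possible by convexity and continuous dependence on $x$), and finally mollify at a scale chosen so that the oscillation of $G$ is dominated by $\delta/2$. Convexity of $\Lambda_j(x)$ then guarantees that the mollified function is an admissible selection of $\Lambda_j$, while hypothesis $(H_5)$ controls the $W$-energy uniformly on the exceptional Lusin set, whose contribution can be made arbitrarily small.

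The principal obstacle is this final smoothing step: since $\Lambda_j(x)$ depends pointwise on $x$, naive mollification need not produce admissible selections. The remedy is the slack-and-absorb procedure described above, available only because the determinant constraint is \emph{affine} in $\zeta$ (making $\Lambda_j(x)$ convex) and $G$ is uniformly continuous. Sending first $\varepsilon \to 0$ and then $\delta \to 0$ yields the $\leq$ inequality and hence \eqref{120}.
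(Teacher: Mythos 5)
Your argument is essentially correct and rests on the same two pillars as the paper's proof: (1) Lemma~\ref{lem interchange} gives a uniform safety margin (a $\beta$ depending only on $\eta$ and $\|G\|_{L^\infty}$) so that the pointwise infimum over $\Lambda_j(x)$ equals $W_0(G(x))$ and is attained well inside the constraint set, and (2) the linearity of $\zeta\mapsto\det(G(x)|\zeta)$ makes $\Lambda_j(x)$ convex, so that mollification preserves admissibility up to a controlled loss of margin. Where you diverge is in how the approximate selection is built before mollifying: you pass through a measurable selection theorem, Lusin's theorem, and a ``constrained Tietze extension'' that is supposed to keep values inside $\Lambda_j^{\delta/2}(\cdot)$. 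That last step is not standard Tietze and is the least rigorous part of your sketch; it can be repaired (e.g.\ compose a Tietze extension with the nearest-point projection onto $\Lambda_j^{\delta/2}(x)$, which is continuous in $(x,\zeta)$ because these convex sets vary Hausdorff-continuously with $x$), but it needs explicit justification. The paper avoids the selection/Lusin/Tietze machinery entirely: it fixes a fine triangulation $\mathcal{T}_h$, replaces $G$ by a piecewise constant interpolant $G_h$, chooses a constant optimizer $\varphi_T$ on each triangle via Lemma~\ref{lem interchange}, and sets $\phi_h$ to be the resulting piecewise constant selection. For $h$ small one has $W(G(x)|\phi_h(x))\le W_0(G(x))+\delta$ with a uniform lower bound on $\det(G(x_0)|\phi_h(x))$ for $x$ near $x_0$, so mollifying $\phi_h$ at a small scale immediately yields a smooth admissible competitor. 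The net effect is the same, but the triangulation argument is more elementary and self-contained; your approach is a valid alternative, provided the extension step is fleshed out.
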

	
	\begin{proof}
		
		By Tietze extension theorem and uniform continuity of $G$, we may assume that $G$ satisfies the assumptions of the Lemma in a neighborhood $\Omega'$ of $\Omega$, up to taking a smaller~$\eta$. 
		
		Let $h>0$ and let $\mathcal{T}_h$ be a regular triangulation of $\R^2$ such that $\diam(T) \leq h$ for every $T \in \mathcal{T}_h$. We fix $G_h$ any piecewise constant interpolation of $G$ on $\mathcal{T}_h$. Using Lemma \ref{lem interchange} with $j \geq \beta(\eta,\Vert G \Vert_{L^\infty})$ from \eqref{99}, for every $T \in \mathcal{T}_h$ we pick $\varphi_T \in \R^3$ such that $W(G_h(x)|\varphi_T) = W_0(G_h(x))$ for every $x \in T$. 
		We define $\phi_h\colon \Omega' \to \R^3$ as $\phi_h(x)=\varphi_T$ for $x \in T$.
		For $\delta>0$, we notice that by uniform continuity of $G$ and by continuity of $W$, for every $h$ small enough it holds 
		\begin{equation}\label{aaa}
			W(G(x)|\phi_h(x)) \leq W_0(G(x))+\delta \ \ \ \mbox{for every $x \in \Omega$}.
		\end{equation}
		Observe that by uniform continuity of $G$ we have 
		$$
		\det(G(x)|\phi_h(x)) \geq \frac{1}{2j} \ \ \ \mbox{for every $x \in \Omega'$}
		$$
		hence, we can find an uniform $\varepsilon>0$ not depending on $x_0$ and $h$, such that
		$$
		\det(G(x_0)|\phi_h(x)) \geq \frac{1}{3j} \ \ \ \mbox{for every $x_0 \in \Omega'$ and every $x \in B_\varepsilon(x_0) \cap \Omega'$}.
		$$
		Let $\{ \rho_\epsilon \}_\epsilon$ be a family of standard mollifiers and set $\phi_{\varepsilon,h}=\rho_\varepsilon * \phi_h$. By linearity of the determinant as a function of the columns and by the properties of convolutions we deduce that $\det(G(x)|\phi_{\varepsilon,h}(x)) \geq \frac{1}{3j}$ for every $x \in \Omega$.
		
		For every $h>0$ we have that $\phi_{\varepsilon,h} \to \phi_h$ in $L^p(\Omega,\R^3)$ for every $p<\infty$. By the uniform control of the determinant, by ($H_3$) and by \eqref{aaa} we have
		\begin{equation*}
			\lim_{\varepsilon \to 0} \int_\Omega W(G(x)|\phi_{\varepsilon,h}) \, dx = \int_\Omega W(G(x)|\phi_h(x)) \, dx \leq \int_\Omega W_0(G(x)) \, dx+\delta|\Omega|.
		\end{equation*}
		Since $\delta$ is arbitrary we conclude.

	\end{proof}

	\begin{cor}\label{interchange cor}
		Let $\Omega \subset \R^2$ be an open bounded set, $\eta>0$, and $G: \overline{\Omega} \to \mathbb{M}^{3 \times 2}$ be uniformly continuous and such that $|G^1(x) \wedge G^2(x)| \geq \eta$ for every $x \in \overline{\Omega}$. Then, there exists $\beta=\beta(\eta,\Vert G \Vert_{L^\infty})$ such that
		\begin{align*}
			\int_\Omega W_0(G(x)) \, dx = \inf \left\{ \int_\Omega W \right. & (G(x)|\phi(x)):  \phi \in C^\infty(\overline{\Omega},\R^3), \\ &  \det(G(x)|\phi(x)) \geq \frac{1}{\beta}  \  \mbox{for}  \ \mbox{$x \in \overline{\Omega}$,} \ \Vert \phi \Vert_{L^\infty} \leq \beta \bigg\}.
		\end{align*}
	\end{cor}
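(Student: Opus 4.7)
The strategy is to combine the pointwise representation of $W_0$ supplied by Lemma \ref{lem interchange} with the interchange-of-infimum-and-integral principle of Lemma \ref{lem anza-hafsa}. The first lemma pointwise realizes $W_0(G(x))$ as a minimum over an admissible set of $\zeta$'s that is uniform in $x$; the second turns that pointwise minimum into an infimum over smooth selections from the same admissible set.

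Concretely, I would apply Lemma \ref{lem interchange} with $\alpha := \eta$ and $K := \Vert G \Vert_{L^\infty}$. Since $G(x) \in \Lambda(\eta, \Vert G \Vert_{L^\infty})$ for every $x \in \overline{\Omega}$, this produces a constant $\beta_0 = \beta(\eta, \Vert G \Vert_{L^\infty})$ such that
\[
W_0(G(x)) = \min\bigl\{W(G(x)|\zeta) : \det(G(x)|\zeta) \geq 1/\beta_0, \ |\zeta| \leq \beta_0\bigr\} \qquad \text{for every } x \in \overline{\Omega}.
\]
In the notation of Lemma \ref{lem anza-hafsa} this reads $W_0(G(x)) = \inf_{\zeta \in \Lambda_{\beta_0}(x)} W(G(x)|\zeta)$, and a fortiori $W_0(G(x)) = \inf_{\zeta \in \Lambda_j(x)} W(G(x)|\zeta)$ for every $j \geq \beta_0$, since $\Lambda_{\beta_0}(x) \subseteq \Lambda_j(x) \subseteq \R^3$ and $W_0(G(x))$ is by definition the infimum over all of $\R^3$.

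Next, I would set $\beta := \max\{\beta_0, j(\eta, \Vert G \Vert_{L^\infty})\}$, where $j(\eta, \Vert G \Vert_{L^\infty})$ is the threshold furnished by Lemma \ref{lem anza-hafsa}. Applying that lemma with $j = \beta$ yields
\[
\inf_{\phi \in C^\infty(\overline{\Omega}, \Lambda_\beta)} \int_\Omega W(G(x)|\phi(x)) \, dx = \int_\Omega \inf_{\zeta \in \Lambda_\beta(x)} W(G(x)|\zeta) \, dx = \int_\Omega W_0(G(x)) \, dx,
\]
and the admissibility condition $\phi \in C^\infty(\overline{\Omega}, \Lambda_\beta)$ decodes to $\phi \in C^\infty(\overline{\Omega}, \R^3)$ with $\det(G(x)|\phi(x)) \geq 1/\beta$ for every $x \in \overline{\Omega}$ and $\Vert \phi \Vert_{L^\infty} \leq \beta$. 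This is exactly the identity claimed in the corollary.

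No substantial obstacle is expected, since the proof is essentially a bookkeeping exercise: one only needs to take $\beta$ simultaneously larger than the threshold of Lemma \ref{lem interchange} guaranteeing the pointwise representation of $W_0$ and larger than the threshold of Lemma \ref{lem anza-hafsa} guaranteeing the interchange of infimum and integral. The nontrivial analytical content has already been packaged into the two preceding lemmas.
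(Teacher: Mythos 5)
Your proof is correct and follows exactly the paper's route: the paper's own proof is the single sentence ``combine Lemma~\ref{lem anza-hafsa} with Lemma~\ref{lem interchange} with the choice $\alpha=\eta$ and $K=\Vert G\Vert_{L^\infty}$,'' and your write-up simply spells out the bookkeeping (monotonicity of $\Lambda_j$ in $j$, then taking $\beta$ above both thresholds) that the paper leaves implicit.
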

	
	\begin{proof}
		It is enough to combine Lemma \ref{lem anza-hafsa} with Lemma \ref{lem interchange} with the choice $\alpha=\eta$ and $K=\Vert G \Vert_{L^\infty}$.
	\end{proof}
	
	
	We now prove an analogous result to Corollary \ref{interchange cor}, which gives that for any subset $U \subset \Omega$ there exists an almost optimal function $\phi$ for $W_0$ on $U$ which satisfies the determinant constraint in the whole $\Omega$. 
	
	\begin{lem}\label{lem interchange subset}
		Let $\Omega$ and $G$ be as in Corollary \ref{interchange cor}. Then, for every open set $U \subset \Omega$, there exists $\beta=\beta(\eta,\Vert G \Vert_{L^\infty})>0$ such that the following holds
		\begin{align*}
			\int_U W_0(G(x)) \, dx = \inf \left\{ \int_U W \right. & (G(x)|\phi(x)):  \phi \in C^\infty(\overline{\Omega},\R^3), \\ &  \det(G(x)|\phi(x)) \geq \frac{1}{\beta}  \  \mbox{for}  \ \mbox{$x \in \overline{\Omega}$,} \ \Vert \phi \Vert_{L^\infty(\Omega)} \leq \beta \bigg\}.
		\end{align*}
	\end{lem}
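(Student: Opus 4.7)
The plan is to revisit the proof of Lemma~\ref{lem anza-hafsa}, noticing that the function $\phi_{\varepsilon,h}$ constructed there is already defined on all of $\Omega$ with uniform determinant and $L^{\infty}$ bounds, and that the pointwise upper bound $W(G|\phi_h)\leq W_0(G)+\delta$ is obtained triangle-by-triangle throughout $\Omega$. Restricting the resulting integral inequality from $\Omega$ to the arbitrary open subset $U$ then costs nothing.

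More concretely, I will first extend $G$ via Tietze to a uniformly continuous map on a neighborhood $\Omega'$ of $\overline{\Omega}$ preserving $|G^1\wedge G^2|\geq \eta/2$. On a regular triangulation $\mathcal{T}_h$ of $\R^2$ with $\diam(T)\leq h$, I will fix a piecewise constant interpolation $G_h$ of $G$ and apply Lemma~\ref{lem interchange} with $\alpha=\eta/2$ and $K=\Vert G\Vert_{L^\infty}$ to obtain, on each triangle $T$, a vector $\varphi_T\in\R^3$ realizing $W_0(G_h|_T)$ with $|\varphi_T|\leq\beta$ and $\det(G_h|_T|\varphi_T)\geq 1/\beta$ for a uniform $\beta=\beta(\eta,\Vert G\Vert_{L^\infty})$. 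Setting $\phi_h:=\varphi_T$ on each $T$ defines a piecewise constant map on $\Omega'$ that satisfies these bounds globally; its mollification $\phi_{\varepsilon,h}:=\rho_\varepsilon * \phi_h$ is smooth on $\overline{\Omega}$, and exploiting linearity of the determinant in the last column together with uniform continuity of $G$ gives, for $\varepsilon\ll h$, a uniform lower bound $\det(G|\phi_{\varepsilon,h})\geq 1/(3\beta)$ on $\overline{\Omega}$ and the $L^\infty$ bound $\Vert \phi_{\varepsilon,h}\Vert_{L^\infty(\Omega)}\leq \beta$.

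With these global constraints in place, sending $\varepsilon\to 0$ for fixed $h$, the uniform control of the determinant together with ($H_3$) and continuity of $W$ allows dominated convergence on $U$:
\[
\int_U W(G|\phi_{\varepsilon,h})\,dx\ \longrightarrow\ \int_U W(G|\phi_h)\,dx\ \leq\ \int_U W_0(G)\,dx+\delta|U|,
\]
where the rightmost inequality follows from the local minimality of $\varphi_T$ combined with uniform continuity of $G$ and continuity of $W$. Letting $\delta\to 0$ (via $h\to 0$) will conclude, upon replacing $\beta$ with $3\beta$ if needed. No step is genuinely hard; the single substantive observation is that the integral inequality in Lemma~\ref{lem anza-hafsa} is established pointwise via a local construction, so integrating over the arbitrary open set $U$ rather than $\Omega$ costs nothing while the globally imposed constraints on $\phi_{\varepsilon,h}$ are preserved.
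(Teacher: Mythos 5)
Your proof is correct, but you are working harder than necessary by re-opening the construction inside Lemma~\ref{lem anza-hafsa}. The paper's proof is a one-line localization of Corollary~\ref{interchange cor} based on a pointwise inequality in the opposite direction from the one you emphasize: by definition of $W_0$ as an infimum over $\zeta$, one always has $W(G(x)|\phi(x))\geq W_0(G(x))$ for every $x$ and every admissible $\phi$. Hence, if $\phi\in C^\infty(\overline\Omega,\R^3)$ is the function produced by Corollary~\ref{interchange cor} satisfying $\int_\Omega W(G|\phi)\,dx\leq \int_\Omega W_0(G)\,dx+\varepsilon$, then the nonnegative integrand $W(G|\phi)-W_0(G)$ has total mass at most $\varepsilon$ over $\Omega$, so its integral over \emph{any} measurable $U\subset\Omega$ is also at most $\varepsilon$; this is exactly the claim, with the same $\beta$. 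Your route, by contrast, goes back inside the proof of Lemma~\ref{lem anza-hafsa} to extract the stronger pointwise upper bound $W(G|\phi_h)\leq W_0(G)+\delta$ on all of $\Omega$ and then restricts it to $U$. That is valid and indeed yields a marginally stronger statement (a pointwise, not merely integral, approximate optimality), but it duplicates the mollification and determinant-control work that Corollary~\ref{interchange cor} has already packaged. The moral: once a pointwise lower bound $W(G|\phi)\geq W_0(G)$ is available for free, an $\varepsilon$-optimality on $\Omega$ localizes to subsets automatically, and there is no need to revisit the construction.
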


	\begin{proof}
		Fix $U \subset \Omega$ open and $\varepsilon>0$. Using Corollary \ref{interchange cor} we can find $\phi \in C^\infty(\overline{\Omega},\R^3)$ such that $\Vert \phi \Vert_{L^\infty(\Omega)} \leq \beta$, $\det(G(x)|\phi(x)) \geq 1/\beta$ and
		\begin{align}
			\label{701}
			\int_\Omega W(G(x)|\phi(x)) \, dx \leq \int_\Omega W_0(G(x)) \, dx+\varepsilon.
		\end{align}
		Noticing that $W(G(x)|\phi(x)) \geq W_0(G(x))$ for every $x \in \Omega$, \eqref{701} yields
		\begin{equation}\label{702}
			\int_U W(G(x)|\phi(x)) \, dx \leq \int_U W_0(G(x)) \, dx+\varepsilon;
		\end{equation}
		and therefore the proof of the Lemma.
	\end{proof}
	
	\begin{lem}\label{selection}
		
		Let $E \subset \R^2$ be open and bounded, let $\eta>0$ and $G \in C(\overline{E},\mathbb{M}^{3 \times 2})$ be such that $|G^1(x) \wedge G^2(x)| \geq \eta$ for every $x \in \overline{E}$. Let $\varepsilon\in(0,1/2)$, $\mathcal{T}$ be a finite triangulation of $E$ and $\Psi: E \to \mathbb{M}^{2 \times 2}$ be a piecewise constant function on the triangulation such that 
		\begin{equation}\label{300}
			\Vert \det(\Psi(x))-1 \Vert_{L^\infty} \leq \varepsilon.
		\end{equation}
		Then, there exists $\varphi \in C^\infty(\overline{E},\R^3)$ and $\beta=\beta(\eta,\Vert G \Vert_{L^\infty},\Vert \Psi \Vert_{L^\infty})>0$ such that $\Vert \varphi \Vert_{L^\infty(E)} \leq \beta$ and
		\begin{align}
			\label{399}
			& \det(G(x) \Psi(x)|\varphi(x)) \geq \frac{1}{\beta}, \\
			\label{400}
			& \int_E W(G(x)\Psi(x)|\varphi(x)) \, dx \leq \int_E W_0(G(x)\Psi(x)) \, dx+\varepsilon.
		\end{align}
		
	\end{lem}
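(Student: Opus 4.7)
The plan is to apply Lemma~\ref{lem interchange subset} triangle-by-triangle and then paste the resulting smooth functions together via a partition of unity. The decisive algebraic observation is that
\[
(G\Psi\,|\,v)=(G\,|\,v)\begin{pmatrix}\Psi & 0 \\ 0 & 1\end{pmatrix},\qquad\text{whence}\qquad\det(G\Psi\,|\,v)=\det\Psi\cdot\det(G\,|\,v)
\]
for every $v\in\R^3$; since \eqref{300} combined with $\varepsilon\leq 1/2$ forces $\det\Psi(x)\in[1/2,3/2]$, a smooth field $\phi$ satisfying $\det(G(x)\Psi_T\,|\,\phi(x))\geq 1/\beta$ for a single $T\in\mathcal T$ automatically satisfies $\det(G(x)\Psi_{T'}\,|\,\phi(x))\geq 1/(3\beta)$ for every other $T'\in\mathcal T$, and hence $\det(G(x)\Psi(x)\,|\,\phi(x))\geq 1/(3\beta)$ a.e.\ on $E$. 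This promotion of a local constraint to a global one is what will allow the splicing step.

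After extending $G$ continuously to a slightly larger open set $\Omega\supset\overline E$ by Tietze's theorem (with the bounds $|(G\Psi_T)^1\wedge(G\Psi_T)^2|\geq\eta/2$ and $\Vert G\Psi_T\Vert_{L^\infty(\Omega)}\leq\Vert G\Vert_{L^\infty}\Vert\Psi\Vert_{L^\infty}$ surviving uniformly in $T$), I would apply Lemma~\ref{lem interchange subset} to the continuous field $G\Psi_T$ with $U=T$ and tolerance $\varepsilon/(2N)$, where $N:=|\mathcal T|$, obtaining for each $T\in\mathcal T$ a map $\phi_T\in C^\infty(\overline\Omega,\R^3)$ satisfying $\Vert\phi_T\Vert_{L^\infty(\Omega)}\leq\beta_0$, $\det(G(x)\Psi_T\,|\,\phi_T(x))\geq 1/\beta_0$ on $\overline\Omega$, and
\[
\int_T W(G\Psi_T\,|\,\phi_T)\,dx\leq\int_T W_0(G\Psi_T)\,dx+\frac{\varepsilon}{2N},
\]
with $\beta_0=\beta_0(\eta,\Vert G\Vert_{L^\infty},\Vert\Psi\Vert_{L^\infty})$ independent of $T$.

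For a small $\delta>0$ I would then fix a smooth partition of unity $\{\chi_T\}_{T\in\mathcal T}$ on $\overline E$ subordinate to $\{T\cup(\partial T)_\delta\}_T$, with $\chi_T\equiv 1$ on $\{x\in T:\dist(x,\partial T)\geq\delta\}$, and set $\varphi:=\sum_T\chi_T\phi_T\in C^\infty(\overline E,\R^3)$. Linearity of the determinant in its last column combined with the key observation immediately gives $\det(G\Psi\,|\,\varphi)\geq 1/(3\beta_0)$ pointwise on $E$, so \eqref{399} holds with $\beta:=3\beta_0$, while $\Vert\varphi\Vert_{L^\infty(E)}\leq\beta_0$ is clear. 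For \eqref{400}, I would split $E$ into $E\setminus\mathrm{Sk}_\delta$ and the $\delta$-neighborhood $\mathrm{Sk}_\delta$ of the $1$-skeleton of $\mathcal T$, whose Lebesgue measure satisfies $|\mathrm{Sk}_\delta|\leq C(\mathcal T)\delta$: on the former, only a single $\chi_T$ is nonzero, so $\varphi=\phi_{T(x)}$ there and summing the per-triangle estimates yields the bound $\int_E W_0(G\Psi)\,dx+\varepsilon/2$; on the latter, the uniform bounds on $\det(G\Psi\,|\,\varphi)$ and $|\varphi|$ combined with $(H_5)$ provide a pointwise majorization $W(G\Psi\,|\,\varphi)\leq C_1$, so the contribution is at most $C_1 C(\mathcal T)\delta$, which is $\leq\varepsilon/2$ once $\delta$ is chosen small enough.

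The principal subtlety is precisely the gluing across triangle interfaces: the per-triangle optimizers $\phi_T$ a priori satisfy only disjoint pointwise constraints, and without the determinant factorization above a partition-of-unity average would likely destroy the positivity of $\det(G\Psi\,|\,\varphi)$. Once this mechanism is in place, the remaining argument is essentially routine, with the small measure of the skeleton neighborhood absorbing the bulk integral error introduced by averaging in the thin boundary layer.
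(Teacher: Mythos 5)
Your proposal is correct and follows essentially the same route as the paper's proof: apply Lemma~\ref{lem interchange subset} triangle-by-triangle to $G\Psi_T$, glue via a partition of unity, exploit $\det(G\Psi\,|\,\zeta)=\det\Psi\cdot\det(G\,|\,\zeta)$ together with $\det\Psi\in[1/2,3/2]$ to propagate the lower determinant bound across triangles and through the convex combination (yielding $\beta=3\tilde\beta$), and absorb the boundary-layer error using ($H_5$) and the small measure of the skeleton neighborhood. The only cosmetic difference is that you phrase the gluing region as a $\delta$-neighborhood of the $1$-skeleton while the paper uses neighborhoods $U_i\supset V_i$; the estimates are identical.
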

	
	\begin{proof}
		
		Let $\mathcal{T}=\left\{ V_i \right\}_{i=1}^N$ and $\Psi|_{V_i}=A_i$ where $A_i \in \mathbb{M}^{2 \times 2}$ and $|\det(A_i)-1| \leq \varepsilon$ for every $i=1, \dots, N$. We apply Lemma \ref{lem interchange subset} to the function $G A_i$ on the subset $V_i$ of $E$ for every $i=1,\dots,N$. We find $\tilde{\beta}=\tilde{\beta}(\eta,\Vert G \Vert_{L^\infty},\Vert \Psi \Vert_{L^\infty})$ and, for every $i=1,\dots,N$, $\varphi_i \in C^\infty(\overline{E},\R^3)$ such that $\Vert \varphi_i \Vert_{L^\infty(E)} \leq \tilde{\beta}$, $\det(G(x)A_i|\varphi_i(x)) \geq 1/\tilde{\beta}$ for every $x \in \overline{E}$ and 
		$$
		\int_{V_i} W(G(x)A_i|\varphi_i(x)) \, dx \leq \int_{V_i} W_0(G(x)A_i) \, dx + \frac{\varepsilon}{2N}.
		$$
		Let $U_i$ be a small open neighborhood of $V_i$ for every $i=1,\dots,N$. Take $\left\{ \mu_i \right\}_i$ a partition of unity subordinate to the open cover $\left\{U_i\right\}_{i=1}^N$ of $E$. We define $\varphi:=\sum_{i=1}^N \mu_i \varphi_i$. Observe that $\Vert \varphi \Vert_{L^\infty(E)} \leq \tilde{\beta}$ by construction. Furthermore, since $\det(G(x)|\varphi_i(x)) \geq 1/((1+\varepsilon)\tilde{\beta})$ for every $i=1,\dots,N$, for every $x \in \overline{V_k}$ with $k=1,\dots,N$, we have
		\begin{align}\label{det}
			\begin{split}
				\det(G(x)\Psi(x)|\varphi(x)) & =\sum_{i=1}^N \mu_i(x) \det(G(x)A_k|\varphi_i(x))\\ & = \sum_{i=1}^N \mu_i(x) \det(G(x)|\varphi_i(x))\det(A_k) \geq \frac{1-\varepsilon}{(1+\varepsilon)\tilde{\beta}} \geq \frac{1}{3 \tilde{\beta}}.
			\end{split}
		\end{align}
		Setting $\beta:= 3\tilde{\beta}$, we obtain $\Vert \varphi \Vert_{L^\infty(E)} \leq \beta$ and \eqref{399}.
		Finally, using ($H_5$) together with \eqref{det},
		\begin{align*}
			\int_E \! W(G(x)\Psi(x)|\varphi(x)) dx & \leq \sum_{i=1}^N \int_{V_i} \! W(G(x)\Psi(x)|\varphi_i(x))  dx+\sum_{i=1}^N \int_{(U_i \setminus V_i)\cap E} \! W(G(x)\Psi(x)|\varphi(x)) dx \\
			& \leq \int_E W_0(G(x)\Psi(x)) dx + \frac{\varepsilon}{2}+c_{\beta}(\Vert \Psi \Vert_{L^\infty}^p\Vert G \Vert_{L^\infty}^p+\beta^p) \sum_{i=1}^N |U_i \setminus V_i|.
		\end{align*}
		Hence, by taking $|U_i \setminus V_i|$ suitable small for every $i=1,\cdots,N$, we prove \eqref{400}.
		
	\end{proof}

	We are now in a position to give the proof of Theorem \ref{gamma limsup prop}. 
	
	\begin{proof}[Proof of Theorem \ref{gamma limsup prop}]
		Since $u \in Y$ there exists $\eta>0$ such that $|A^1 \wedge A^2| \geq \eta$ for every $A \in \partial u(x)$ and every $x \in \Sigma$. 
		We claim that for every $\varepsilon>0$ small there exist a sequence $\left\{ u_{j,\rho} \right\}_{j,\rho} \subset \sbvp(\Sigma_1,\R^3)$ such that $u_{j,\rho} \to u$ in $L^p(\Sigma_1 ,\R^3)$ as $\rho \to 0$ and $j \to \infty$, and
		\begin{align}\label{G_limsup}
			\begin{split}
				\limsup_{\ j \to \infty, \ \rho \to 0} \ & \int_{\Sigma_1} W \left(\nabla_\alpha u_{j,\rho} \bigg|  \frac{1}{\rho} \partial_3 u_{j,\rho}  \right) \, dx + \int_{J_{u_{j,\rho}}} \psi_\rho(\nu_{u_{j,\rho}}) \, d \mathcal{H}^2 \leq (1+\varepsilon)^2\mathcal{G}^w_0(u)+2\varepsilon,
			\end{split}
		\end{align}
		where $\nabla_\alpha u=(\partial_1 u|\partial_2 u)^T$ and the limsup is taken in the given order, that is, first $\rho \to 0$ and then $j \to +\infty$.
		We write $J_u = \cup_{\ell=1}^m \gamma_\ell$ where $\gamma_\ell$ are the connected components of $J_u$.
		
		Fix $\varepsilon>0$. Let $\delta_0>0$ be such that $(\gamma_\ell)_{\delta_0} \subset \Sigma$ and $(\gamma_\ell)_{\delta_0} \cap (\gamma_s)_{\delta_0}$ for every $s \neq \ell$. For every $0<\delta<\delta_0$, let $\Phi_\delta: \Sigma \setminus J_u \to \Omega^\delta $ by composition of the diffeomorphisms given by Lemma \ref{aff diffeo lemma} in correspondence of each connected component $\gamma_\ell$. Where the set $\Omega^\delta$ is a Lipschitz set such that $\Omega^\delta \subset \Sigma \setminus J_u$. In particular we may assume that for every $\delta \in (0,\delta_0)$
		\begin{align}\label{det eps}
			& \Vert D \Phi_\delta \Vert_{L^\infty} + \Vert D\Phi_\delta^{-1} \Vert_{L^\infty} \leq 3, \ \ \ \Vert \det (D\Phi_{\delta})-1 \Vert_{L^\infty} \leq \varepsilon; \\
			\label{2000}
			& |\det(M)-1| \leq \varepsilon \ \ \mbox{for every $M \in \partial \Phi_\delta^{-1}(x)$ and every $x \in \Omega^\delta$}.  
		\end{align}
		
		Let us set $\Omega^\delta_1=\Omega^\delta \times (-1/2,1/2)$. With a slight abuse of notation, for every $(x_1,x_2) \in \Omega^\delta$, we also set
		\begin{equation*}
			\Phi_\delta(x_1,x_2,x_3):=(\Phi_\delta(x_1,x_2),x_3).
		\end{equation*} 
		Now let us fix $\delta \in (0,\delta_0)$ and define the function $v:=u \circ \Phi_\delta^{-1}$ on $\Omega^\delta_1$. Using \eqref{det eps} and (\ref{2000}), and arguing as in \eqref{2001} in the proof of Theorem \ref{main result 1}, we have that $v \in \Aff^*(\Omega^\delta,\R^3)$ with
		$\| \nabla v \|_{L^\infty} \leq 3 \| \nabla u \|_{L^\infty}$.

		
		We apply Lemma \ref{trabelsi lemma modified} to the function $v$ on $\Omega^\delta$. Hence, we find  
		$
		v_j \in C^1(\overline{\Omega^{\delta}},\R^3) 
		$
		satisfying properties (i) and (ii) with some constant $\theta>0$ not depending on $j$.

		We now apply Lemma \ref{selection} with $G=\nabla v_j$, $\Psi=D\Phi_\delta(\Phi_\delta^{-1})$ and $E=\Omega^\delta$ for every $j \geq 1$. We find ${\varphi}_{j} \in C^\infty(\Omega^\delta,\R^3) \cap W^{1,\infty}(\Omega^\delta,\R^3)$ and $\beta=\beta(\theta, \Vert \nabla u \Vert_{L^\infty(\Sigma_1)})$ such that for every $j \geq 1$, $\Vert \varphi_{j} \Vert_{L^\infty(\Omega_1^\delta)} \leq \beta$ and
		\begin{align} 
			\label{401}
			& \det(\nabla_\alpha v_j (x) D\Phi_\delta(\Phi_\delta^{-1}(x))|\varphi_{j}(x)) \geq \frac{1}{\beta} \ \ \ \mbox{for a.e $x \in \Omega_1^\delta$}, \\
			\label{sup def}
			& \int_{\Omega^\delta_1} W(\nabla_\alpha v_j (x) D\Phi_\delta(\Phi_\delta^{-1}(x))|\varphi_{j}(x)) \, dx \leq \int_{\Omega^\delta_1} W_0(\nabla_\alpha v_j(x)D\Phi_\delta(\Phi_\delta^{-1}(x))) \, dx + \varepsilon.
		\end{align}

		We define the recovery sequence as
		\begin{equation*}
			u_{j,\rho}(x):=v_j( \Phi_\delta(x)) + \rho x_3 \varphi_{j} ( \Phi_\delta(x)).
		\end{equation*}
		By construction $u_{j,\rho} \in \sbvp(\Sigma_1,\R^3)$. Observe that the approximate normal $\nu_{u_{j,\rho}}$ to $J_{u_{j,\rho}}$ has third component $(\nu_{u_{j,\rho}})_3=0$, so that $\psi_\rho(\nu_{u_{j,\rho}})=1$ on $J_{u_{j,\rho}}$. Moreover, by definition of $\Phi_\delta$ it holds $J_{u_{j,\rho}} \subseteq J_u$. Thus, we have
		\begin{align}\label{conv of jump set}
			\begin{split}
				\limsup_{j \to +\infty} \, \limsup_{\rho \to 0} & \int_{J_{u_{j,\rho}}} \psi_\rho(\nu_{u_{j,\rho}}) \, d \mathcal{H}^2  = \limsup_{j \to +\infty} \, \limsup_{\rho \to 0} \mathcal{H}^2 (J_{u_{j,\rho}}) \leq \mathcal{H}^2(J_u).
			\end{split}
		\end{align}
		By the change of variables induced by $\Phi_\delta^{-1}$, we obtain
		\begin{align*}
			\limsup_{j \to \infty} \, \limsup_{\rho \to 0} \| u_{j,\rho}-u \|_{L^p(\Sigma_1)} & =\limsup_{j \to \infty} \|v_j \circ \Phi_\delta- u \|_{L^p(\Sigma_1)} \\ & \leq \lim_{j \to \infty} 3^{1/p} \|v_j-v \|_{L^p(\Omega_1^\delta)} =0.
		\end{align*}
		
		We now estimate $\rho^{-1}\mathcal{G}_\rho(u_{j,\rho})$. Since we have already computed the limit of the surface integral in (\ref{conv of jump set}), we restrict ourselves to the bulk part.
		Let us keep in mind that $\nabla \varphi_{j} \in L^\infty(\Omega_1^\delta ,\R^3)$ for $j$ fixed, although possibly not uniformly. However this is no problem since we first take the limit in $\rho$. By \eqref{399}, we deduce
		\begin{align*}
			&\limsup_{ j \to \infty} \, \limsup_{\rho \to 0} \int_{\Sigma_1} W \left(\nabla_\alpha u_{j,\rho} \left|  \frac{1}{\rho} \partial_3 u_{j,\rho} \right. \right) \, dx \\
			& = \limsup_{j \to \infty} \, \limsup_{\rho \to 0}  \int_{\Sigma_1} W\bigg(\nabla_\alpha v_j (\Phi_\delta(x))D\Phi_\delta(x)+\rho x_3 \nabla_\alpha \varphi_{j} (\Phi_\delta(x)) D\Phi_\delta(x) \bigg| \varphi_{j} (\Phi_\delta(x)) \bigg) \, dx \\
			& = \limsup_{j \to \infty}  \int_{\Sigma_1} W\bigg(\nabla_\alpha v_j (\Phi_\delta(x))D\Phi_\delta(x) \bigg| \varphi_{j} (\Phi_\delta(x)) \bigg) \, dx.
		\end{align*}
		We now perform a change of variables with $\Phi_\delta^{-1}$. After that, using (\ref{det eps}) and (\ref{sup def}), we get
		\begin{align*}
			& \limsup_{j \to \infty}  \int_{\Sigma_1} W\bigg(\nabla_\alpha v_j (\Phi_\delta(x))D\Phi_\delta(x) \bigg| \varphi_{j} (\Phi_\delta(x)) \bigg) \, dx \\
			& =  \limsup_{j \to \infty} \int_{\Omega_1^\delta} W \bigg( \nabla_\alpha v_j(x')D\Phi_\delta(\Phi_\delta^{-1}(x')) \bigg| \varphi_{j}(x') \bigg) \, |\det(D\Phi^{-1}_\delta(x'))| dx'\\
			& \leq \limsup_{j \to \infty} \ (1+\varepsilon) \int_{\Omega_1^\delta} W \bigg(\nabla_\alpha v_j(x')D\Phi_\delta(\Phi_\delta^{-1}(x')) \bigg| \varphi_{j}(x')\bigg) \, dx' \\
			& \leq \limsup_{j \to \infty} \ (1+\varepsilon) \int_{\Omega_1^\delta} W_0 \bigg( \nabla_\alpha v_j(x')D\Phi_\delta(\Phi_\delta^{-1}(x')) \bigg) \, dx'+ 2 \varepsilon.
		\end{align*}
		We can now pass to the limit on $j$ by Vitali dominate convergence theorem using properties (i)--(iv) of Lemma \ref{trabelsi lemma modified} and Lemma \ref{reduced density lemma}. Finally, by a change of variables with $\Phi_\delta$, in view of \eqref{det eps} we have 
		\begin{align*}
			&\limsup_{j \to \infty} \ (1+\varepsilon) \int_{\Omega_1^\delta} W_0 \bigg( \nabla_\alpha v_j(x')D\Phi_\delta(\Phi_\delta^{-1}(x')) \bigg) \, dx'+ 2 \varepsilon \\
			& =  (1+\varepsilon) \int_{\Omega_1^\delta} W_0 \bigg(\nabla_\alpha v(x')D\Phi_\delta(\Phi_\delta^{-1}(x')) \bigg) \, dx'+ 2 \varepsilon \\
			& \leq (1+\varepsilon)^2 \int_{\Sigma_1} W_0 \bigg(\nabla_\alpha v(\Phi_\delta(x))D\Phi_\delta(x) \bigg) \, dx+ 2 \varepsilon \\
			& = (1+\varepsilon)^2 \int_{\Sigma_1} W_0(\nabla_\alpha u(x)) \, dx+ 2 \varepsilon.
		\end{align*}
		Thus, keeping in mind \eqref{conv of jump set}, (\ref{G_limsup}) is proved.
		
		By the arbitrariness of $\varepsilon$, we infer $\Gamma-\limsup_{\rho \to 0} \rho^{-1}\mathcal{G}_\rho(u) \leq \mathcal{G}_0^w(u)$ for every $u \in Y$.
		For the remaining $u \in \gsbvp(\Sigma_1,\R^3) \setminus Y$, the above inequality is obvious. By lower semicontinuity of the $\Gamma$-limsup (see \cite[Proposition 6.8]{DM93gamma}) we then conclude.
	\end{proof}

	%
	
	\subsection{Proof of the Main Theorem} 
	
	In this last subsection we give the proof of Theorem \ref{gamma limit teo}. Let us summarize our strategy. So far, using Proposition \ref{gamma liminf} and Theorem \ref{gamma limsup prop}, for every $u \in \gsbvp(\Sigma,\R^3)$ we have proven that 
	
	\begin{equation*}
		\mathcal{G}_0(u) \leq \Gamma-\liminf_{\rho \to 0} \rho^{-1} \mathcal{G}(u) \leq \Gamma-\limsup_{\rho \to 0} \rho^{-1} \mathcal{G}(u) \leq \overline{\mathcal{G}_0^w}(u).
	\end{equation*}
	Observe now that we cannot use Theorem \ref{relaxation gsbvp} to represent $\mathcal{G}_0^w$ as an integral functional, as no $p$-growth from the above is satisfied by the bulk energy. Following the lines of \cite{hafsa2006nonlinear} and \cite{benbe1996}, our aim is to prove that $\overline{\mathcal{G}_0^w}=\overline{\mathcal{G}_0^{\mathcal{R}}}$, where
	\begin{equation}\label{500}
		\mathcal{G}_0^{\mathcal{R}}(u) := 
		\begin{cases*}
			\displaystyle \int_{\Sigma} \mathcal{R}W_0(\nabla u) \, dx +\mathcal{H}^1(J_u) & if $u \in \gsbvp(\Sigma,\R^3)$, \\
			+\infty & otherwise.
		\end{cases*}
	\end{equation} 
	Above, $\mathcal{R}W_0$ being the rank one convexified of $W_0$. Indeed, in Lemma \ref{lemma rank one} we show that $\mathcal{R}W_0$ is finite valued, continuous and satisfies an uniform $p$-growth condition. This allows us to apply the integral representation of Theorem \ref{relaxation gsbvp} to $\mathcal{G}^\mathcal{R}_0$. The proof of the equality $\overline{\mathcal{G}^w_0}=\overline{\mathcal{G}^\mathcal{R}_0}$ relies on Proposition \ref{ben belg lemma}, where we make use of the density result proved in Theorem \ref{main result 1}. \\
	
	We start with some properties of the rank-one and quasi-convex envelope of $W_0$ (see Definitions \ref{rkone} and \ref{q convex env}).
	
	\begin{lem}\label{lemma rank one}
		Let $W_0 \in C(\mathbb{M}^{3 \times 2},[0,+\infty])$ a $p$-coercive function satisfying (ii) and (iii) of Lemma \ref{reduced density lemma}. Then
		\begin{enumerate}
			\item[\textnormal{(i)}] $\mathcal{R}W_0$ is finite valued, continuous and $p$-coercive,
			\item[\textnormal{(ii)}] there exists $C>0$ such that $\mathcal{R}W_0(A) \leq C(1+|A|^p)$ for every $A \in \mathbb{M}^{3 \times 2}$,
			\item[\textnormal{(iii)}] $\mathcal{Q}W_0$ is finite valued, continuous, $p$-coercive and rank-one convex.
		\end{enumerate}
	\end{lem}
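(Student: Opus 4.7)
The plan is to tackle (ii) first, as the most technical step, then extract (i) and (iii) by essentially standard considerations.

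For (ii), the idea is to express an arbitrary $A = (A^1|A^2) \in \mathbb{M}^{3 \times 2}$ as an iterated rank-one convex combination of four matrices on which Lemma \ref{reduced density lemma}(iii) provides a uniform $p$-growth bound. Concretely, fix orthonormal $e, f \in \R^3$ with $|e \wedge f| = 1$, set $t := 3(1+|A|)$, and put $B_{\sigma\tau} := (A^1 + \sigma t e\,|\,A^2 + \tau t f)$ for $\sigma, \tau \in \{+,-\}$. Two successive rank-one splittings (the pairs $B_{\sigma+}, B_{\sigma-}$ differ by $(0\,|\,2tf)$, and their midpoints $(A^1\pm te\,|\,A^2)$ differ by $(2te\,|\,0)$) display $A = \tfrac14\sum_{\sigma,\tau} B_{\sigma\tau}$ in the rank-one convex hull, so rank-one convexity of $\mathcal{R}W_0$ yields $\mathcal{R}W_0(A) \leq \tfrac14 \sum_{\sigma,\tau} W_0(B_{\sigma\tau})$. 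Expanding $B_{\sigma\tau}^1 \wedge B_{\sigma\tau}^2 = A^1 \wedge A^2 + \sigma t\, e \wedge A^2 + \tau t\, A^1 \wedge f + \sigma\tau\, t^2 e \wedge f$, using $|A^1 \wedge A^2| \leq |A|^2$ and $|A^j| \leq |A|$, gives $|B_{\sigma\tau}^1 \wedge B_{\sigma\tau}^2| \geq t^2 - 2t|A| - |A|^2 \geq 2$ for the chosen $t$, and $|B_{\sigma\tau}| \leq 7(1+|A|)$. Applying Lemma \ref{reduced density lemma}(iii) with $\delta = 1$ produces a uniform constant $C$ with $\mathcal{R}W_0(A) \leq C(1+|A|^p)$.

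For (i), finite-valuedness of $\mathcal{R}W_0$ is inherited from (ii); the lower bound $\mathcal{R}W_0(A) \geq C_1|A|^p - 1/C_1$ follows because $A \mapsto C_1|A|^p - 1/C_1$ is convex—hence rank-one convex—and, by the $p$-coercivity of $W_0$, lies below $W_0$. Continuity is a consequence of the standard fact (see \cite{DacorognaDirect}) that finite-valued separately convex functions—and in particular finite-valued rank-one convex functions—are locally Lipschitz.

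For (iii), the inequality $\mathcal{Q}W_0 \leq \mathcal{R}W_0$ is obtained by realizing the fourth-order rank-one laminate from (ii) through a piecewise affine test function $\varphi \in W^{1,\infty}_0(B_1,\R^3)$ whose gradient takes the four values $B_{\sigma\tau} - A$ on subregions of equal measure, following the standard Hadamard-compatible construction; Definition \ref{q convex env} then gives $\mathcal{Q}W_0(A) \leq \tfrac14\sum W_0(B_{\sigma\tau}) \leq C(1+|A|^p)$, so $\mathcal{Q}W_0$ is finite-valued. Rank-one convexity of $\mathcal{Q}W_0$ is a standard consequence of quasi-convexity; coercivity follows from the same convex (hence quasi-convex) minorant $C_1|A|^p - 1/C_1$; and continuity again follows from local Lipschitzness of finite-valued rank-one convex functions.

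The main obstacle is the simultaneous control of $|B_{\sigma\tau}^1 \wedge B_{\sigma\tau}^2|$ for all four sign choices: no single rank-one split suffices, and one must use two perpendicular directions $e, f \in \R^3$ and a choice of $t$ scaling linearly in $1+|A|$ so that the quadratic-in-$t$ term $t^2 e \wedge f$ dominates the linear corrections uniformly in $\sigma, \tau$, while keeping $|B_{\sigma\tau}|^p$ controlled by $(1+|A|)^p$.
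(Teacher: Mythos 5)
The paper's own ``proof'' of this lemma is essentially a collection of citations: parts (i) and (ii) are attributed to Ben Belgacem (\cite{benbe1996}) and part (iii) to \cite{HafsaMandallenaRelaxation} together with \cite[Theorem~1.1]{DacorognaDirect}. You instead give a self-contained argument, so a direct comparison of proofs is not possible; what matters is whether your argument is correct.

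Parts (i) and (ii) are sound. The two successive rank-one splittings, with the amplitude $t=3(1+|A|)$ chosen so that $t^2|e\wedge f|$ dominates the cross terms uniformly in the signs $\sigma,\tau$, correctly place all four corner matrices $B_{\sigma\tau}$ in the region $\{|M^1\wedge M^2|\geq 1,\ |M|\leq 7(1+|A|)\}$, so Lemma~\ref{reduced density lemma}(iii) with $\delta=1$ gives the uniform $p$-growth; this is, as far as I can tell, exactly the mechanism in the cited reference. The deductions in (i) — finite-valuedness from (ii), the convex minorant $C_1|A|^p-1/C_1$ for coercivity, and local Lipschitzness of finite-valued rank-one convex functions for continuity — are all correct and standard.

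Part (iii), however, has a genuine gap. You claim $\mathcal{Q}W_0(A)\leq \tfrac14\sum W_0(B_{\sigma\tau})$ by ``realizing the fourth-order laminate through a piecewise affine $\varphi\in W^{1,\infty}_0(B_1,\R^3)$ whose gradient takes the four values $B_{\sigma\tau}-A$ on subregions of equal measure.'' No such compactly supported function exists with the gradient taking \emph{only} those four values: any second-order laminate with zero boundary data requires a boundary layer (of small but positive measure) in which the gradient is a different value, typically a convex combination of the lamina gradients. But the convex hull of $\{B_{\sigma\tau}\}$ contains $A$ itself, and $A$ may have $A^1\wedge A^2=0$; then $W_0(A+\nabla\varphi)=+\infty$ somewhere in the boundary layer and the entire integral $\int_{B_1}W_0(A+\nabla\varphi)\,dx$ is $+\infty$, regardless of how small the layer is. This is precisely the difficulty that the iterated Kohn--Strang construction later in the paper is designed to handle: Lemma~\ref{3000}(ii) perturbs the rank-one direction $b$ to some $b_\ell\notin\textnormal{Im}\,A$ so that the convex hull of the admissible gradients $\{A+b_\ell\otimes d_i\}$ consists entirely of injective $3\times2$ matrices. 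Your construction has no such safeguard, and the assertion ``$\mathcal{Q}W_0$ is finite-valued'' therefore does not follow from the argument as written. (The subsequent claims in (iii) — quasi-convexity, rank-one convexity, continuity — all depend on having first established finite-valuedness, so they are also not justified at this stage.) To repair it you would need either the perturbation trick above, a passage through the periodic-test-function formula for $\mathcal{Q}W_0$ together with a separate argument that the two definitions agree for this extended-valued integrand, or the approach of \cite{HafsaMandallenaRelaxation} which the paper invokes.
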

	
	\begin{proof}
		Part (i) and (ii) of the statement are proven in \cite[Proposition 7 and Lemma 8]{benbe1996}. See also \cite[Lemma 6.5]{Trabelsimodeling} and \cite[Section 5.1]{belgacem2000}.
		
		Concerning part (iii), $p$-coercivity is obvious. The fact $\mathcal{Q}W_0$ is finite valued, continuous and has $p$-growth from above is proved in \cite[Theorem 1 and Proposition 1]{HafsaMandallenaRelaxation}. By \cite[Theorem 1.1]{DacorognaDirect}, this also gives that $\mathcal{Q}W_0$ is rank-one convex .
	\end{proof}

	We will also need the following result which follows from Kohn and Strang \cite[Section 5C]{KohnStrang}. 
	
	\begin{lem}\label{KS lemma}
		Define the sequence $\left\{ \mathcal{R}_i W_0 \right\}_{i \geq 0}$ by $\mathcal{R}_0 W_0=W_0$ and for every $i \geq 0 $ and every $A \in \mathbb{M}^{3 \times 2}$
		$$
		\mathcal{R}_{i+1}W_0(A):=\inf_{\underset{\lambda \in [0,1]}{a \in \R^2, \ b \in \R^3}} \left\{ (1-\lambda)\mathcal{R}_i W_0(A-\lambda b \otimes a)+\lambda \mathcal{R}_i W_0(A+(1-\lambda) b \otimes a) \right\}.
		$$
		We have that $\mathcal{R}_iW_0$ is upper semi-continuous for every $i \geq 0$ and finite valued for every $ i \geq 2$. Moreover, $\mathcal{R}_{i+1} W_0 \leq \mathcal{R}_{i}W_0$ for every $i \geq 0$ and $\mathcal{R}W_0= \inf_{i \geq 0} \mathcal{R}_i W_0$.
	\end{lem}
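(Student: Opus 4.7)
The plan is to verify the four assertions in turn, starting with the easiest and saving finite-valuedness for last, since it is the real substance of the statement.

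The monotonicity $\mathcal{R}_{i+1} W_0 \le \mathcal{R}_i W_0$ follows at once from the defining formula by choosing $a = 0$ (or $b = 0$): the rank-one matrix $b \otimes a$ then vanishes and the bracket inside the infimum reduces to $\mathcal{R}_i W_0(A)$. Upper semicontinuity is then shown by induction on $i$. The base case is given by Lemma~\ref{reduced density lemma}(i), which asserts that $W_0$ is continuous as an extended real-valued function, hence in particular upper semicontinuous. For the inductive step, I observe that for each fixed $(a, b, \lambda)$ the map $A \mapsto (1-\lambda) \mathcal{R}_i W_0(A - \lambda b \otimes a) + \lambda \mathcal{R}_i W_0(A + (1-\lambda) b \otimes a)$ is upper semicontinuous in $A$ by the inductive hypothesis and continuity of translations; hence $\mathcal{R}_{i+1} W_0$ is upper semicontinuous as the pointwise infimum of such maps.

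For finite-valuedness when $i \ge 2$ --- the main obstacle of the lemma --- I exploit the characterization from Lemma~\ref{reduced density lemma}(ii), namely that $W_0(A) = +\infty$ precisely when $A^1 \wedge A^2 = 0$, i.e.\ when $\operatorname{rank}(A) \le 1$. The argument splits according to the rank of $A \in \mathbb{M}^{3 \times 2}$. If $\operatorname{rank}(A) = 2$, then $W_0(A) < \infty$, so $\mathcal{R}_i W_0(A) < \infty$ for every $i \ge 0$ by monotonicity. If $\operatorname{rank}(A) = 1$, I write $A = v \otimes w$ with $v \in \R^3$ and $w \in \R^2$ both nonzero, and choose $b \in \R^3$ linearly independent of $v$ together with $a \in \R^2$ linearly independent of $w$. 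A direct computation then gives $\operatorname{rank}(A + t\, b \otimes a) = 2$ for every $t \ne 0$, so $W_0$ is finite at both $A - \lambda b \otimes a$ and $A + (1-\lambda) b \otimes a$, yielding $\mathcal{R}_1 W_0(A) < \infty$. Finally, when $A = 0$, a single rank-one perturbation $b \otimes a$ can only reach a matrix of rank $\le 1$, which still lies in the infinity set of $W_0$; this is precisely what forces the threshold $i \ge 2$. To overcome it I pick any nonzero $a, b$ and $\lambda \in (0, 1)$: both $-\lambda b \otimes a$ and $(1-\lambda) b \otimes a$ are nonzero rank-one matrices, at which $\mathcal{R}_1 W_0$ is finite by the previous case, so $\mathcal{R}_2 W_0(0) < \infty$. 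Monotonicity then gives $\mathcal{R}_i W_0(A) < \infty$ for every $A$ and every $i \ge 2$.

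For the identification $\mathcal{R} W_0 = \inf_i \mathcal{R}_i W_0$, set $f := \inf_i \mathcal{R}_i W_0$. I first check that $f$ is rank-one convex: given $\xi = \lambda \xi^+ + (1-\lambda) \xi^-$ with $\xi^+ - \xi^- = b \otimes a$ and $\varepsilon > 0$, I pick $i$ large enough so that $\mathcal{R}_i W_0(\xi^\pm) \le f(\xi^\pm) + \varepsilon$ (using monotonicity to align the two indices); the defining formula immediately gives $\mathcal{R}_{i+1} W_0(\xi) \le \lambda \mathcal{R}_i W_0(\xi^+) + (1-\lambda) \mathcal{R}_i W_0(\xi^-)$, and letting $\varepsilon \to 0$ yields rank-one convexity of $f$. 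Since $f \le W_0$, the maximality in Definition~\ref{rkone} gives $f \le \mathcal{R} W_0$. Conversely, induction on $i$ proves $\mathcal{R} W_0 \le \mathcal{R}_i W_0$: the base $\mathcal{R} W_0 \le W_0$ is clear, and applying rank-one convexity of $\mathcal{R} W_0$ to the decomposition $A = \lambda (A + (1-\lambda) b \otimes a) + (1-\lambda)(A - \lambda b \otimes a)$ gives $\mathcal{R} W_0(A) \le (1-\lambda) \mathcal{R}_i W_0(A - \lambda b \otimes a) + \lambda \mathcal{R}_i W_0(A + (1-\lambda) b \otimes a)$; taking the infimum over $(a, b, \lambda)$ yields $\mathcal{R} W_0 \le \mathcal{R}_{i+1} W_0$, completing the induction and the proof.
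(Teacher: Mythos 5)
Your proof is correct and follows the same strategy as the paper's: monotonicity by choosing $b\otimes a=0$, upper semicontinuity by induction using that infima of u.s.c.\ functions are u.s.c., and finiteness for $i\geq2$ via the rank structure of the infinity set of $W_0$ from Lemma~\ref{reduced density lemma} (you invoke part~(ii), the paper invokes part~(iii); both deliver the needed fact that $W_0$ is finite on rank-two matrices). The one place you go further is the identification $\mathcal{R}W_0=\inf_i\mathcal{R}_iW_0$: the paper, once it has established that $\mathcal{R}_2W_0$ is finite valued, simply asserts this identity by appealing to the Kohn--Strang iterative scheme of \cite{KohnStrang}, whereas you reproduce the standard two-sided argument (rank-one convexity of $\inf_i\mathcal{R}_iW_0$ via a diagonal $\varepsilon$-argument, then $\mathcal{R}W_0\leq\mathcal{R}_iW_0$ by iterating rank-one convexity of $\mathcal{R}W_0$). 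This is a reasonable self-contained addition, and the chain of reasoning matches the paper's intent precisely.
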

	\begin{proof}
		 Notice that since $W_0$ satisfies (i) and (iii) of Lemma \ref{reduced density lemma}, we have that $\mathcal{R}_1 W_0(A)$ is finite for every $A \in \mathbb{M}^{3 \times 2}$ such that $A \neq 0$, thus $\mathcal{R}_2 W_0$ is finite valued and $\mathcal{R}_2 W_0 \leq \mathcal{R} W_0$. This allows us to apply the iterative scheme of Khon and Strang to $W_0$ even if it is not finite valued. For the upper semicontinuity, it is enough to notice that the infimum of upper semi-continuous functions is upper semi-continuous as well and reason by induction.
	\end{proof}

	As in \cite{hafsa2006nonlinear} the following technical lemma will be crucial to the asymptotic analysis. We give the proof of this result in the Appendix.
	
	\begin{lem}[Belgacem]\label{BB lemma}
		Let $V \subset \Sigma$ an open set with $|\partial V|=0$ and $A \in \mathbb{M}^{3 \times 2}$ with $\textnormal{rank}(A)=2$.  There exists $\left\{ \phi_{n,l,q} \right\}_{n,l,q \geq 1} \subset \textnormal{Aff}_c(V,\R^3)$ such that:
		\begin{enumerate}
			\item[\textnormal{(i)}] for every $l,q \geq 1$ we have $\lim_{n \to +\infty} \phi_{n,l,q} =0$ in $L^p(V,\R^3)$,
			\item[\textnormal{(ii)}] for every $n,l,q \geq 1$ the function $x \mapsto \phi_{n,l,q}(x)+A x+c \in \Aff^*(V,\R^3)$ for any $c \in \R^3$,
			\item[\textnormal{(iii)}] $\displaystyle \lim_{q \to +\infty} \lim_{l \to+ \infty} \lim_{n \to +\infty} \int_V \mathcal{R}_i W_0(\nabla \phi_{n,l,q} + A) \, dx \leq |V|\mathcal{R}_{i+1} W_0(A)$.
		\end{enumerate}
	\end{lem}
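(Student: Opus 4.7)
The approach is a rank-one laminate construction at three nested scales, with the compact-support and maximum-rank constraints reconciled by keeping the laminate amplitude asymptotically small. First, for each $l$, pick $a_l \in \R^2$, $b_l \in \R^3$, $\lambda_l \in [0,1]$ with
\[
(1-\lambda_l)\mathcal{R}_i W_0(F_1^l) + \lambda_l \mathcal{R}_i W_0(F_2^l) \leq \mathcal{R}_{i+1} W_0(A) + \frac{1}{l},
\]
where $F_1^l := A - \lambda_l\, b_l \otimes a_l$ and $F_2^l := A + (1-\lambda_l) b_l \otimes a_l$. The essential preparation is to arrange $A + \mu\, b_l \otimes a_l$ to have rank $2$ for every $\mu \in [-\lambda_l, 1-\lambda_l]$. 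Since $\mu \mapsto (A^1 + \mu (a_l)_1 b_l) \wedge (A^2 + \mu (a_l)_2 b_l)$ is affine in $\mu$ and nonzero at $\mu = 0$ (as $\textnormal{rank}\,A = 2$), at most one offending value $\mu^*$ exists. By a small perturbation of $(a_l, b_l, \lambda_l)$ we displace $\mu^*$ outside $[-\lambda_l, 1-\lambda_l]$, absorbing the perturbation cost into $1/l$ thanks to the upper semicontinuity of $\mathcal{R}_i W_0$ from Lemma~\ref{KS lemma}.

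Next, let $h_l: \R \to \R$ be the continuous 1-periodic piecewise affine sawtooth of slope $-\lambda_l$ on a fraction $(1-\lambda_l)$ of each period and slope $(1-\lambda_l)$ on the complementary fraction, and set $\psi_{n,l}(x) := \frac{1}{n} h_l(n\, a_l \cdot x)\, b_l$. This is piecewise affine with $\Vert \psi_{n,l} \Vert_{L^\infty} \leq C(l)/n$ and its gradient takes only the two values $-\lambda_l b_l \otimes a_l$ and $(1-\lambda_l) b_l \otimes a_l$. For each $q \geq 1$ pick an open polygonal $V_q \Subset V$ with $|V \setminus V_q| \leq 1/q$ (possible since $|\partial V| = 0$), and define $\phi_{n,l,q}$ to coincide with $\psi_{n,l}$ on $V_q$ and to be extended by piecewise affine interpolation to $0$ on a narrow polygonal buffer inside $V \setminus V_q$ surrounding $\partial V$; then $\phi_{n,l,q} \in \textnormal{Aff}_c(V, \R^3)$.

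Property (i) follows from $|\phi_{n,l,q}| \leq C(l)/n$. For (ii), the Clarke subdifferential of $\phi_{n,l,q} + Ax + c$ at interior points of $V_q$ lies in the segment $\{A + \mu\, b_l \otimes a_l : \mu \in [-\lambda_l, 1-\lambda_l]\}$, uniformly rank $2$ by the preparation above; in the transition buffer the extra gradients produced by the piecewise affine interpolation are of order $O(1/n)$ because the values of $\psi_{n,l}$ on $\partial V_q$ are themselves $O(1/n)$, hence the subdifferential sits in an arbitrarily small neighborhood of $A$ and rank $2$ is preserved by continuity of $\det$ for $n$ large; outside the support the subdifferential is $\{A\}$, again rank $2$ by hypothesis. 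For (iii), the standard weak-$*$ oscillation-averaging for periodic functions yields
\[
\lim_{n \to \infty} \int_{V_q} \mathcal{R}_i W_0(\nabla \phi_{n,l,q} + A)\, dx = |V_q|\bigl[(1-\lambda_l)\mathcal{R}_i W_0(F_1^l) + \lambda_l \mathcal{R}_i W_0(F_2^l)\bigr],
\]
while on $V \setminus V_q$ the gradients are uniformly rank $2$ and bounded, so $\mathcal{R}_i W_0 \leq W_0 \leq c(1+|\cdot|^p)$ from Lemma~\ref{reduced density lemma}(iii) bounds this contribution by $C(l)/q$. Sending $n \to \infty$, then $l \to \infty$ (absorbing the $1/l$ approximation of the infimum), then $q \to \infty$ yields the desired bound $|V|\,\mathcal{R}_{i+1} W_0(A)$.

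The principal obstacle is reconciling the compact-support requirement with the maximum-rank constraint on the Clarke subdifferential: a generic cutoff introduces uncontrolled gradient directions in the transition strip, which at the corner vertices where the buffer meets the laminate produce convex hulls that may contain rank-$1$ matrices. The plan exploits the fact that the oscillation amplitude is $O(1/n)$ to keep the buffer gradients perturbatively small, so rank $2$ is automatic from continuity of $\det$ for large $n$. A secondary subtlety, already present at the parameter-selection stage, is that the unique possible rank-drop value $\mu^*$ on the affine family $\mu \mapsto A + \mu\, b_l \otimes a_l$ may initially lie inside the laminate interval $[-\lambda_l, 1-\lambda_l]$; its displacement by a small perturbation of the triple $(a_l, b_l, \lambda_l)$ is viable only thanks to the upper semicontinuity of $\mathcal{R}_i W_0$ established in Lemma~\ref{KS lemma}.
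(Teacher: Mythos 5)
Your overall architecture — a rank-one laminate, a compact-support cutoff, a rank preparation step, and the chain of limits $n \to \infty$, $l \to \infty$, $q \to \infty$ using the upper semicontinuity from Lemma~\ref{KS lemma} — is the right one and matches the paper at that level. The genuine gap lies in the cutoff buffer, and it interacts with your rank preparation in a way that breaks both property (ii) and the energy estimate there. Your claim that ``the extra gradients produced by the piecewise affine interpolation are of order $O(1/n)$'' is false: the boundary trace $\psi_{n,l}|_{\partial V_q}$ has $O(1/n)$ \emph{magnitude} but $O(1)$ \emph{tangential derivative}, since the oscillation frequency $n$ cancels the amplitude $1/n$. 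Any piecewise affine interpolation matching these values has gradient of schematic form $A + c\, b_l \otimes a_l^{\tau} + O\bigl(1/(n w_q)\bigr)$, where $a_l^{\tau}$ is the tangential projection of $a_l$ along $\partial V_q$ and $w_q$ is the buffer width. You only prepared $A + \mu\, b_l \otimes a_l$ to be full rank for $\mu \in [-\lambda_l, 1-\lambda_l]$; nothing you did controls $A + c\, b_l \otimes a_l^{\tau}$ for the buffer directions $a_l^{\tau}$, so neither the rank constraint nor the invocation of Lemma~\ref{reduced density lemma}(iii) to bound the buffer contribution by $C(l)/q$ is justified.

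The paper resolves both problems in one move that is stronger than your ``displace $\mu^*$'' perturbation. In Lemma~\ref{3000} it sets $b_\ell := b + \frac{1}{\ell}\nu$ with $\nu$ normal to $\textnormal{Im}\,A$ whenever $b \in \textnormal{Im}\,A$: then $A + b_\ell \otimes d$ has rank $2$ for \emph{every} $d \in \R^2$ (if $(A + b_\ell\otimes d)v = 0$, either $d\cdot v = 0$ and $Av=0$ forces $v=0$, or $b_\ell \in \textnormal{Im}\,A$, a contradiction). This covers any buffer direction for free. Moreover, the paper does not cut off the laminate at all: the zero boundary values are built into the construction on the unit cube via the triangular transition regions $B_{k,n}$, $C_{k,n}$ (of total area $O(1/n)$), so that the gradient takes only five values $A + b_\ell\otimes d_i$, and the compactly supported function on $V$ is then assembled by scaling onto the plurirectangle $V_q$ with no separate buffer. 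If you wish to retain your simpler 1D laminate with an independent cutoff, you must at minimum adopt the $b_\ell \notin \textnormal{Im}\,A$ preparation so the buffer gradients are automatically full rank, and then verify the gradient magnitude bound needed for Lemma~\ref{reduced density lemma}(iii) directly.
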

	
	We now show the $\Gamma$-limsup inequality in terms of the auxiliary functional $\mathcal{G}_0^{\mathcal{R}}$.
	
	\begin{prop}\label{ben belg lemma}
		Let $\mathcal{G}_0^{\mathcal{R}}$ be given by \eqref{500}. For every $u \in \gsbvp(\Sigma,\R^3)$ we have
		\begin{equation}
			\Gamma-\limsup_{\rho \to 0} \rho^{-1}\mathcal{G}_\rho(u) \leq \overline{\mathcal{G}_0^\mathcal{R}}(u).
		\end{equation}
	\end{prop}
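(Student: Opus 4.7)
The strategy is to combine the intermediate $\Gamma$-limsup estimate in terms of $\overline{\mathcal{G}_0^w}$ (Theorem~\ref{gamma limsup prop}) with the pointwise inequality $\overline{\mathcal{G}_0^w}(u)\le \mathcal{G}_0^{\mathcal{R}}(u)$ for every $u\in\gsbvp(\Sigma,\R^3)$. Once this is established, lower semicontinuity of the $\Gamma$-limsup immediately yields
$$\Gamma\text{-}\limsup_{\rho\to 0}\rho^{-1}\mathcal{G}_\rho(u)\le \overline{\mathcal{G}_0^w}(u)\le \mathcal{G}_0^{\mathcal{R}}(u)$$
for all $u$, and hence the same inequality with $\mathcal{G}_0^{\mathcal{R}}$ replaced by its lsc envelope $\overline{\mathcal{G}_0^{\mathcal{R}}}$. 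The proof of $\overline{\mathcal{G}_0^w}(u)\le \mathcal{G}_0^{\mathcal{R}}(u)$ is carried out in two steps.

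\textbf{Step 1 (piecewise affine case).} For $u\in Y$, I claim that
$$\overline{\mathcal{G}_0^w}(u)\le \int_{\Sigma}\mathcal{R}_iW_0(\nabla u)\,dx+\mathcal{H}^1(J_u)\quad\text{for every $i\ge 0$,}$$
which I prove by induction on $i$. The base case $i=0$ is trivial since $\overline{\mathcal{G}_0^w}\le \mathcal{G}_0^w$ and $\mathcal{R}_0W_0=W_0$. For the inductive step, write $u$ as a piecewise affine function with constant gradient $A_k$ on triangles $T_k$, and apply the Belgacem Lemma~\ref{BB lemma} (with parameter $i$) on each $T_k$, obtaining perturbations $\phi^k_{n,l,q}\in\Aff_c(T_k,\R^3)$ with $\phi^k+A_kx+c\in\Aff^*(T_k,\R^3)$ and $\int_{T_k}\mathcal{R}_iW_0(\nabla\phi^k+A_k)\to |T_k|\mathcal{R}_{i+1}W_0(A_k)$ in the triple limit. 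Defining $\tilde u:=u+\sum_k \phi^k\mathbf{1}_{T_k}$, since each $\phi^k$ is compactly supported in $T_k$, the jump set, the $\widehat{\mathcal{W}}$-structure and the $\Aff^*$ condition near interfaces all coincide with those of $u$; inside each $T_k$, $\Aff^*$ is ensured by Belgacem's property~(ii). Hence $\tilde u\in Y$. A diagonal selection in the $(n,l,q)$ parameters gives a sequence with $\tilde u_j\to u$ in $L^p(\Sigma,\R^3)$ and $\int\mathcal{R}_iW_0(\nabla\tilde u_j)\to \sum_k|T_k|\mathcal{R}_{i+1}W_0(A_k)=\int\mathcal{R}_{i+1}W_0(\nabla u)$. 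Combining lsc of $\overline{\mathcal{G}_0^w}$ with the inductive hypothesis applied to $\tilde u_j\in Y$ yields the claim at level $i+1$. Letting $i\to\infty$ and using $\mathcal{R}_iW_0\searrow \mathcal{R}W_0$ (Lemma~\ref{KS lemma}) together with dominated convergence (the integrand is bounded by the finite value $W_0(\nabla u)$, which is a simple function on the triangulation of $u$) gives $\overline{\mathcal{G}_0^w}(u)\le \int\mathcal{R}W_0(\nabla u)+\mathcal{H}^1(J_u)=\mathcal{G}_0^{\mathcal{R}}(u)$ for every $u\in Y$.

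\textbf{Step 2 (general case).} For $u\in\gsbvp(\Sigma,\R^3)$ with $\mathcal{G}_0^{\mathcal{R}}(u)<+\infty$, apply the density Theorem~\ref{main result 1} to obtain $u_j\in Y$ with $u_j\to u$ in measure, $\nabla u_j\to\nabla u$ in $L^p$, and $\limsup_j\mathcal{H}^1(J_{u_j})\le\mathcal{H}^1(J_u)$. By Step~1, $\overline{\mathcal{G}_0^w}(u_j)\le \int\mathcal{R}W_0(\nabla u_j)\,dx+\mathcal{H}^1(J_{u_j})$, and since $\mathcal{R}W_0$ is continuous with $p$-growth (Lemma~\ref{lemma rank one}), a Vitali-type argument gives $\int\mathcal{R}W_0(\nabla u_j)\to \int\mathcal{R}W_0(\nabla u)$. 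Therefore $\limsup_j\overline{\mathcal{G}_0^w}(u_j)\le \mathcal{G}_0^{\mathcal{R}}(u)$, and by lsc of $\overline{\mathcal{G}_0^w}$ together with $u_j\to u$ in measure we conclude $\overline{\mathcal{G}_0^w}(u)\le\mathcal{G}_0^{\mathcal{R}}(u)$, as desired.

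\textbf{Main obstacle.} The delicate point is the gluing in Step~1: the Belgacem perturbations guarantee $\Aff^*$ only on the open triangle $V$, and one must be sure that patching them with $u$ across interfaces and vertices preserves membership in $Y$. This is secured by exploiting the compact support of $\phi^k$ inside $T_k$, which forces the Clarke subdifferential of $\tilde u$ at points of $\partial T_k\cap\Sigma$ to coincide with that of the original $u$, where the $\Aff^*$ condition already holds. A secondary technical point is managing the triple Belgacem limit jointly with the triangle index $k$ and the inductive level $i$, which requires a standard but careful diagonalization.
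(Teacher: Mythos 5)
Your proposal follows essentially the same two-step route as the paper: Step~1 proves $\overline{\mathcal{G}_0^w}(v)\le\int_\Sigma\mathcal{R}_iW_0(\nabla v)\,dx+\mathcal{H}^1(J_v)$ by induction on $i$, perturbing the piecewise affine $v$ on each element of its triangulation via Lemma~\ref{BB lemma} and using lower semicontinuity of $\overline{\mathcal{G}_0^w}$; Step~2 passes to general $u\in\gsbvp(\Sigma,\R^3)$ by Theorem~\ref{main result 1} together with the $p$-growth and continuity of $\mathcal{R}W_0$ from Lemma~\ref{lemma rank one}. One small imprecision: Lemma~\ref{BB lemma}(iii) gives an inequality $\le$, not convergence to $|V|\mathcal{R}_{i+1}W_0(A)$, but since only the upper bound is used the argument is unaffected.
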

	
	\begin{proof}
		We divide the proof in two steps.
		
		\noindent \textbf{Step 1:} We first prove that for every $v \in \Aff^*(\Sigma \setminus \overline{J_v},\R^3) \cap \widehat{\mathcal{W}}(\Sigma,\R^3)$, we have
		\begin{equation}\label{ben belg ineq}
			\overline{\mathcal{G}_0^{w}}(v) \leq  \int_\Sigma \mathcal{R}W_0(\nabla v) \, dx + \mathcal{H}^1(J_v).
		\end{equation}
		According to Lemma \ref{KS lemma}, it is enough to show that for every $i \geq 0$ and for every $v \in \Aff^*(\Sigma \setminus \overline{J_v},\R^3) \cap \widehat{\mathcal{W}}(\Sigma,\R^3)$ we have
		\begin{equation}\label{501}
			\overline{\mathcal{G}_0^{w}}(v) \leq \int_\Sigma \mathcal{R}_i W_0(\nabla v) \, dx + \mathcal{H}^1(J_v).
		\end{equation}
		We show \eqref{501} by induction on $i$. Since $\mathcal{R}_0W_0=W_0$ the inequality is satisfied by definition of $\mathcal{G}_0^w$ if $i=0$. Assume now that \eqref{501} holds for a certain $i \in \N$. By definition of $v$, there exists a finite family $\left\{ V_j \right\}_{j=1}^M$ of open disjoint subsets of $\Sigma$ such that $|\partial V_j|=0$ for every $j$, $|\Sigma \setminus \cup_{j=1}^M V_j|=0$, $J_v \subseteq \cup_{j=1}^M \partial V_j$ and for every $j=1,\dots ,M$ we have $ v(x)=A_jx+c$ in $V_j$, where $A_j \in \mathbb{M}^{3 \times 2}$, rank$(A_j)=2$ and $c \in \R^3$. For every $j=1,\dots ,M$ consider $\left\{ \phi^j_{k,l,q} \right\}_{k,l,q \geq 1} \subset \textnormal{Aff}_c(V_j,\R^3)$ as in Lemma \ref{BB lemma} applied with with $V=V_j$ and $A=A_j$. Now set 
		$$
		\Psi_{k,l,q}(x):=v(x)+\phi_{k,l,q}^j(x) \ \ \ \mbox{if $x \in V_j$}.
		$$
		We have $J_{\Psi_{k,l,q}}=J_v$ and $\Psi_{k,l,q} \in \Aff^*(\Sigma \setminus \overline{J_v},\R^3) \cap \widehat{\mathcal{W}}(\Sigma,\R^3)$ for every $k,l,q \geq 1$ by (ii) of Lemma \ref{BB lemma}. Hence, the inductive hypothesis implies that
		$$
		\overline{\mathcal{G}_0^{w}}(\Psi_{k,l,q}) \leq \int_\Sigma \mathcal{R}_i W_0(\nabla \Psi_{k,l,q}) \, dx+\mathcal{H}^1(J_{\Psi_{k,l,q}}) \ \ \mbox{for every $k,l,q \geq 1$}.
		$$
		By Lemma \ref{BB lemma} (i) for every $l,q\geq 1$ we see that $\Psi_{k,l,q} \to v$ in $L^p(\Sigma,\R^3)$ for $k \to \infty$. By lower semicontinuity of $\overline{\mathcal{G}_0^w}$, it follows that
		$$
		\overline{\mathcal{G}_0^{w}}(v) \leq \liminf_{k \to +\infty} \overline{\mathcal{G}_0^{w}}(\Psi_{k,l,q}) \leq \lim_{k \to +\infty} \int_\Sigma \mathcal{R}_i W_0(\nabla \Psi_{k,l,q}) \, dx+\mathcal{H}^1(J_v) \ \ \mbox{for every $k,l,q \geq 1$}.
		$$
		Moreover, from Lemma \ref{BB lemma} (iii) we have
		\begin{align*}
			\lim_{q \to +\infty} \lim_{l \to +\infty} \lim_{k \to +\infty} \int_\Sigma \mathcal{R}_i W_0(\nabla \Psi_{k,l,q}) \, dx &  = \lim_{q \to+ \infty} \lim_{l \to+ \infty} \lim_{k \to +\infty} \sum_{j=1}^M \int_{V_j} \mathcal{R}_i W_0(\nabla \phi_{k,l,q}+A_j) \, dx \\ 
			& \leq \sum_{j=1}^M |V_j|\mathcal{R}_{i+1} W_0(A_j) = \int_\Sigma \mathcal{R}_{i+1} W_0(\nabla v) \, dx.
		\end{align*}
		Therefore,
		$$
		\overline{\mathcal{G}_0^{w}}(v) \leq \int_\Sigma \mathcal{R}_{i+1}W_0(\nabla v) \, dx + \mathcal{H}^1(J_v),
		$$
		and \eqref{501} holds for all $i \in \N$. Hence, \eqref{ben belg ineq} follows.
		
		\noindent \textbf{Step 2:} We now come to the general case of $u \in \gsbvp(\Sigma,\R^3)$. Recall that, by Theorem \ref{gamma limsup prop}, we have that for every $u \in \gsbvp(\Sigma,\R^3)$
		\begin{equation}\label{503}
			\Gamma-\limsup_{\rho \to 0} \rho^{-1} \mathcal{G}_\rho(u) \leq \overline{\mathcal{G}_0^w}(u). 
		\end{equation}
		We now claim that, given $u \in \gsbvp(\Sigma,\R^3)$, it holds
		\begin{equation}\label{502}
			\overline{\mathcal{G}_0^w}(u) \leq \int_\Sigma \mathcal{R}W_0(\nabla u) \, dx+ \mathcal{H}^1(J_u).
		\end{equation}
		Indeed, by Theorem \ref{main result 1} there exists $ u_k \in \Aff^*(\Sigma \setminus \overline{J_{u_k}},\R^3) \cap \widehat{\mathcal{W}}(\Sigma,\R^3)$ such that $u_k \to u$ in measure on $\Sigma$, $\nabla u_k \to \nabla u$ in $L^p(\Sigma,\mathbb{M}^{3 \times 2})$ and
		$$ \limsup_{k \to +\infty} \mathcal{H}^{1}(J_{u_k}) \leq \mathcal{H}^{1}(J_{u}). $$ 
		By (i) and (ii) of Lemma \ref{lemma rank one} and by the $L^p$-convergence of $\nabla u_k$ to $\nabla u$ we have that
		$$
		\lim_{k \to +\infty} \int_\Sigma \mathcal{R}W_0(\nabla u_k) \, dx = \int_\Sigma \mathcal{R}W_0(\nabla u) .
		$$
		Therefore, using inequality \eqref{ben belg ineq} for the functions $u_k$, we deduce
		\begin{align*}
			\overline{\mathcal{G}_0^w}(u) & \leq \liminf_{k \to +\infty} \overline{\mathcal{G}_0^w}(u_k) \leq \liminf_{k \to +\infty} \int_\Sigma \mathcal{R}W_0(\nabla u_k) \, dx+ \mathcal{H}^1(J_{u_k}) \\ & \leq \int_\Sigma \mathcal{R}W_0(\nabla u) \, dx+ \mathcal{H}^1(J_u), 
		\end{align*}
		proving \eqref{502}. It follows that $\overline{\mathcal{G}_0^w}(u) \leq \overline{\mathcal{G}_0^{\mathcal{R}}}(u)$. Hence, we conclude using \eqref{503}. 
	\end{proof}
	
	We are finally in a position to give the proof of Theorem \ref{gamma limit teo}.
	
	\begin{proof}[Proof of Theorem \ref{gamma limit teo}]
		By (i) and (ii) in Lemma \ref{lemma rank one}, we are allowed to apply Theorem \ref{relaxation gsbvp} to the functional $\overline{\mathcal{G}_0^R}$, yielding that, for every $u \in \gsbvp(\Sigma,\R^3)$,
		$$
		\Gamma-\limsup_{\rho \to 0} \rho^{-1} \mathcal{G}_\rho(u) \leq \int_{\Sigma} \mathcal{Q}(\mathcal{R}W_0)(\nabla u) \, dx + \mathcal{H}^1(J_u).
		$$
		Combining with Proposition \ref{gamma liminf}, we deduce
		$$
		\int_{\Sigma} \mathcal{Q}(\mathcal{R}W_0)(\nabla u) \, dx + \mathcal{H}^1(J_u) \leq \mathcal{G}_0(u) \leq \Gamma-\liminf_{\rho \to 0} \rho^{-1} 	\mathcal{G}_\rho(u).
		$$
		Therefore, for every $u \in \gsbvp(\Sigma,\R^3)$, it holds
		$$
		\Gamma-\lim_{\rho \to 0} \rho^{-1} \mathcal{G}_\rho(u)=\int_{\Sigma} \mathcal{Q}(\mathcal{R}W_0)(\nabla u) \, dx + \mathcal{H}^1(J_u).
		$$
		
		It remains to show that $\mathcal{Q}W_0=\mathcal{Q}(\mathcal{R}W_0)$. The "$\geq$" inequality is obvious. By (iii) in Lemma \ref{lemma rank one} and the definition of rank-one convex envelope, we have that $\mathcal{Q}W_0 \leq \mathcal{R}W_0$. With this and Remark \ref{rem on qconv env} applied to $\mathcal{R}W_0$, we deduce $\mathcal{Q}W_0 \leq \mathcal{Q}(\mathcal{R}W_0)$. This concludes the proof of the theorem.
	\end{proof}

	\section{Appendix}
	
	This section is devoted to give a sketch of the proofs of Lemma \ref{segments lemma} and Lemma \ref{BB lemma}.
	
	\begin{proof}[Proof of Lemma \ref{segments lemma}.]
		Since we reason component-wise, we only consider the case $m=1$. Suppose that $\overline{J_u}$ is composed by the union of $K$ closed segments $\Pi$ and $\left\{ \Pi_i \right\}_{i=1}^{K-1}$ with $\Pi \subset \left\{ x_2=0 \right\}$ and that $\Pi \cap (\cup_{i=1}^{K-1} \Pi_i)=\left\{ P_j \right\}_{j=1}^M$. 
		We denote by $\Gamma_j$ the $M+1$ connected components of $\Pi \setminus \left\{ P_j \right\}_{j=1}^M$. Fix $0<\lambda <<\eta$. For each $j=1,\dots,M+1$, we denote by $Z_j$ each set composed by the union of a segment $\Gamma_j^\lambda:=\left\{x \in \Gamma_j | \ \dist(x,\cup_{j=1}^M P_j) \geq \lambda\right\}$ and two segments $S_{j}^1$ and $S_{j}^2$ of length $\eta$, with an endpoint coinciding with one endpoint of $\Gamma_j^\lambda$ and the other contained in $\left\{ x_2>0 \right\}$ (see Figure \ref{f:1}). The same construction as in \cite[Lemma 5.2]{Philippis2017OnTA} shows that, provided $\lambda$ and $\eta$ small enough we can construct a function $v \in \mathcal{W}(\Omega) \cap C^1(\Omega \setminus \overline{J_{v}}) $ such that 
		\begin{align*}
			& \overline{J_{v}}= \left(\bigcup_{i=1}^{K-1} \Pi_i\right) \cup \left(\bigcup_{j=1}^{M+1} Z_j\right), \\
			& \Vert v - u \Vert_{L^1} < \frac{\varepsilon}{2K}, \ \ \ \Vert \nabla v - \nabla u \Vert_{L^p}<\frac{\varepsilon}{2K}, \ \ \ \mathcal{H}^1(J_u \Delta J_{v}) <\frac{\varepsilon}{2K}.
		\end{align*}
		Notice that, for some constant $C>0$ independent of $\eta $ and $\lambda$, one can also ensure
		\begin{equation} \label{800} 
			\Vert \hat{u} \Vert_{W^{1,\infty}} \leq  \frac{C \Vert u \Vert_{W^{1,\infty}}}{\eta}.
		\end{equation}
		
		Now we want to exploit the same construction in order to transform each portion of the jump $Z_1$ into three disjoint components, the first being given by $\Gamma_1^\lambda$ and the remaining two being arbitrarily small "L-shaped" sets entirely contained in $\left\{ x_2 >0\right\}$. This can be done again by fixing $0<\mu<<\delta<<\eta$ (see again Figure \ref{f:1}) and repeating the construction of \cite[Lemma 5.2, proof of (A.8)]{Philippis2017OnTA}. This results in a function $v_1$ with 
		\begin{align}
			\nonumber
			& \overline{J_{v_1}}= \left(\bigcup_{i=1}^{K-1} \Pi_i\right) \cup \left(\bigcup_{j=2}^{M+1} Z_j\right) \cup \Gamma_1^\lambda \cup L_1 \cup L_2, \\
			\label{801}
			& \Vert v_1 - v \Vert_{L^1} < \frac{\varepsilon}{2K(M+1)}, \ \ \ \Vert \nabla v_1 - \nabla v \Vert_{L^p}<\frac{\varepsilon}{2K(M+1)}, \ \ \ \mathcal{H}^1(J_{v_1} \Delta J_v) <\frac{\varepsilon}{2K(M+1)} .
		\end{align}
		Without entering in details (for which we refer again to \cite[Lemma 5.2]{Philippis2017OnTA}), we mention here that a proper choice of $\mu $ and $\delta$ in dependence of \eqref{800} is needed to bound the $L^p$ distance of the gradients.
		Finally, by iterating this procedure for $j=1, \dots, M+1$ and then $i=1,\dots,K-2$, and summing on the inequalities in \eqref{801}, one can obtain the required approximation $u_\varepsilon \in \widehat{\mathcal{W}}(\Omega)$.
		
	\end{proof}

	\begin{figure}[h!]
		\begin{tikzpicture}
			\draw[black, very thick] (0,0) rectangle (5,5);
			\draw[thick] (0,0) rectangle (3.8, 3.4);
			\node at (0.8, 0.5) {$ u = 0$};
			\node at (0.8, 4.5) {$ u = 1$};
			\draw[ very thick] (7,0) rectangle (12,5);
			\draw[thick] (10.8, 0) -- (10.8, 3.4);
			\draw[thick] (7, 3.4) -- (10, 3.4); 
			\draw[thick] (10, 3.4) -- (10, 1.6);
			\node at (7.8, 0.5) {$ u = 0$};
			\node at (7.8, 4.5) {$ u = 1$};
			\fill[fill= gray, fill opacity=0.3] (10.8, 3.4) rectangle (10, 1.6);
			\draw [decorate,decoration={brace, amplitude=4pt}, xshift=0pt,yshift=2pt]
			(10, 3.4) -- (10.8, 3.4)  node [black,midway,xshift=0cm, yshift=4mm] {  $\lambda \eta$};
			\draw [decorate,decoration={brace, amplitude=4pt}, xshift=2pt,yshift=0pt]
			(10.8, 3.4) -- (10.8, 1.6)  node [black,midway,xshift=4mm, yshift=0mm] {  $ \eta$};
			\draw[ very thick] (0,-7) rectangle (5,-2);
			\draw[thick] (3.8, -7) -- (3.8, -3.6);
			\draw[thick] (0, -3.6) -- (3, -3.6); 
			\draw[thick] (3, -4.2) -- (3, -5.4);
			\draw[thick] (2, -4.2) -- (3, -4.2);
			\node at (0.8, -6.5) {$ u = 0$};
			\node at (0.8, -2.5) {$ u = 1$};
			\fill[fill= gray, fill opacity=0.3] (3.8, -3.6) rectangle (3, -5.4);
			\fill[fill= gray, fill opacity=0.3] (2, -4.2) rectangle (3, -3.6);
			\draw [decorate,decoration={brace, amplitude=4pt}, xshift=-2pt,yshift=0pt]
			(2,-4.2) -- (2, -3.6)  node [black,midway,xshift=-4mm, yshift=0mm] {  $ \mu \delta$};
			\draw [decorate,decoration={brace, amplitude=4pt}, xshift=0pt,yshift=2pt]
			(2, -3.6) -- (3, -3.6)   node [black,midway,xshift=0mm, yshift=4mm] {  $ \delta$};
		\end{tikzpicture}
		\caption{} \label{f:1}
	\end{figure}

	Since Lemma \ref{BB lemma} is essentially proved in \cite{benbe1996} and \cite[Lemma B.5]{hafsa2006nonlinear}, we only give a sketch of the proof highlighting the main differences. 
	
	Before proving Lemma \ref{BB lemma}, we need the following result.
	
	\begin{lem}\label{3000}
		Let $Q$ be the unitary cube in $\R^2$, $Q:=(0,1)^2$, let $A \in \mathbb{M}^{3 \times 2}$ a full rank matrix and let $i \geq 0$ be fixed. There exists a sequence $\theta_{n,\ell} \in \Aff_c(Q,\R^3)$ such that 
		\begin{enumerate}
			\item[\textnormal{(i)}] for every $\ell \geq 1$, $\theta_{n,\ell} \to 0$ in $L^p(Q,\R^3)$ as $n \to +\infty$;  
			\item[\textnormal{(ii)}] $x \mapsto Ax+\theta_{n,\ell}(x) \in \Aff^*(Q,\R^3)$;
			\item[\textnormal{(iii)}] $\lim_{\ell \to +\infty} \lim_{n \to +\infty} \int_Q \mathcal{R}_i W_0(A+\nabla \theta_{n,\ell}) \, dx \leq \mathcal{R}_{i+1} W_0(A)$.
		\end{enumerate}
	\end{lem}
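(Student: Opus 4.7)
My plan is to construct $\theta_{n,\ell}$ as a rank-one laminate oscillating between two almost-optimal matrix values in the formula defining $\mathcal{R}_{i+1} W_0(A)$, cut off in a thin boundary strip to enforce compact support. For each fixed $\ell \geq 1$, I would first choose $a_\ell \in \R^2$, $b_\ell \in \R^3$, $\lambda_\ell \in (0,1)$ such that, setting $M_1^\ell := A - \lambda_\ell b_\ell \otimes a_\ell$ and $M_2^\ell := A + (1-\lambda_\ell) b_\ell \otimes a_\ell$,
$$(1-\lambda_\ell)\mathcal{R}_iW_0(M_1^\ell) + \lambda_\ell \mathcal{R}_iW_0(M_2^\ell) \leq \mathcal{R}_{i+1}W_0(A) + \frac{1}{\ell}.$$
Since $A$ has rank $2$ and the rank-deficient matrices on the affine line $t \mapsto A + tb_\ell\otimes a_\ell$ are cut out by a single polynomial in $t$, a small generic perturbation of $(a_\ell, b_\ell)$ places both $M_j^\ell$ in the open set of full-rank matrices. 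Upper semicontinuity of $\mathcal{R}_iW_0$ (Lemma \ref{KS lemma}) ensures that the almost-optimality bound survives this perturbation (at the cost of doubling $1/\ell$).

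For the lamination, let $f_\ell : \R \to [0, \lambda_\ell(1-\lambda_\ell)]$ be the $1$-periodic piecewise affine sawtooth with derivative $1-\lambda_\ell$ on $(0, \lambda_\ell)$ and $-\lambda_\ell$ on $(\lambda_\ell, 1)$, and set $g_{n,\ell}(x) := n^{-1}f_\ell(n\langle x, a_\ell\rangle)\, b_\ell$. Then $g_{n,\ell}$ is piecewise affine with $\|g_{n,\ell}\|_{L^\infty} \leq |b_\ell|/n$, and $A + \nabla g_{n,\ell}$ equals either $M_1^\ell$ or $M_2^\ell$ on strips of relative widths $1-\lambda_\ell$ and $\lambda_\ell$. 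Introducing the shrunken cube $Q_\ell := \{x \in Q : \dist(x, \partial Q) > 1/\ell\}$, I define $\theta_{n,\ell}$ to coincide with $g_{n,\ell}$ on $Q_\ell$ and extend it to a piecewise affine function on $Q$ by a simplicial affine interpolation on the boundary strip $Q \setminus \overline{Q_\ell}$, compatible with the sawtooth triangulation on $\partial Q_\ell$ and prescribing the value zero on $\partial Q$. The interpolation gradient on the strip is then bounded by $\|\nabla \theta_{n,\ell}\|_{L^\infty(Q \setminus Q_\ell)} \leq C\ell |b_\ell|/n$.

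With these definitions, (i) is immediate from $\|\theta_{n,\ell}\|_{L^\infty} \leq |b_\ell|/n \to 0$ as $n \to \infty$ with $\ell$ fixed; (ii) holds because on $Q_\ell$ the gradient $A + \nabla g_{n,\ell}$ equals one of the full-rank matrices $M_j^\ell$, while on $Q \setminus Q_\ell$ it is a perturbation of order $\ell/n$ of the rank-$2$ matrix $A$, so rank $2$ persists once $n$ is large enough with respect to $\ell$. For (iii), a direct Fubini-type computation on the sawtooth strips gives
$$\int_{Q_\ell} \mathcal{R}_i W_0(A + \nabla g_{n,\ell}) \, dx = |Q_\ell|\bigl[(1-\lambda_\ell)\mathcal{R}_iW_0(M_1^\ell) + \lambda_\ell\mathcal{R}_iW_0(M_2^\ell)\bigr] + o(1)_{n \to \infty},$$
the error arising from partial strips near $\partial Q_\ell$; meanwhile $\int_{Q \setminus Q_\ell} \mathcal{R}_iW_0(A + \nabla \theta_{n,\ell})\, dx = O(1/\ell)$ by upper semicontinuity of $\mathcal{R}_iW_0$ and its local boundedness around $A$. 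Sending first $n \to \infty$ and then $\ell \to \infty$, combined with the almost-optimality above, yields the claimed inequality.

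The main obstacle I foresee is guaranteeing the rank-$2$ condition of (ii) on all of $Q$: in the lamination region this is arranged by the genericity perturbation of $(a_\ell, b_\ell)$ (possible thanks to upper semicontinuity of $\mathcal{R}_iW_0$ and openness of the rank-$2$ locus in $\mathbb{M}^{3\times 2}$), and in the boundary strip it is arranged by sending $n \to \infty$ first, so that the interpolation gradient $\ell/n$ is arbitrarily small and the total gradient stays close to the full-rank matrix $A$.
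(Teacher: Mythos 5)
Your overall strategy (a rank-one laminate realizing the two matrices $M_1^\ell,M_2^\ell$ in the one-step lamination formula for $\mathcal R_{i+1}W_0(A)$, cut off near $\partial Q$) is the same as the paper's, and parts (i) and (iii) would go through. However, the argument for (ii) has a genuine gap.

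Recall that membership in $\Aff^*(Q,\R^3)$ requires $\det(M^TM)>0$ for \emph{every} $M$ in the Clarke subdifferential $\partial(Ax+\theta_{n,\ell})(x)$, and at an interface between two simplices the subdifferential is the \emph{convex hull} of the adjacent gradients. At a lamination interface this convex hull is the whole segment $\{A+t\,b_\ell\otimes a_\ell : t\in[-\lambda_\ell,1-\lambda_\ell]\}$, not just the two endpoints $M_1^\ell,M_2^\ell$. If $b_\ell\in\mathrm{Im}\,A$, say $b_\ell=Ac$, then $A+t\,b_\ell\otimes a_\ell=A(I+t\,c\otimes a_\ell)$ becomes rank-deficient at $t=-1/\langle c,a_\ell\rangle$, which may well lie inside $(-\lambda_\ell,1-\lambda_\ell)$. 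Making the two endpoints full rank by a generic perturbation, as you propose, does nothing about this interior degeneracy, so your construction need not lie in $\Aff^*$. The paper's fix is sharper: it replaces $b$ by $b_\ell:=b+\tfrac1\ell\nu$ with $\nu\perp\mathrm{Im}\,A$ whenever $b\in\mathrm{Im}\,A$, so that $b_\ell\notin\mathrm{Im}\,A$; then $A+t\,b_\ell\otimes a$ is injective for \emph{all} $t$ (if $Av=-t\langle a,v\rangle b_\ell$ with $v\neq0$, the left side is in $\mathrm{Im}\,A$ and the right side is not, forcing both to vanish and then $v=0$). Moreover, the paper's explicit cutoff near $\{x_2=0\}$ and $\{x_2=1\}$ (the sets $B_{k,n},B^\pm_{k,n},C_{k,n},C^\pm_{k,n}$) is designed so that $\nabla\theta_{n,\ell}$ \emph{always} has the form $b_\ell\otimes d$ with $d$ in a fixed bounded set; hence every matrix in every subdifferential stays in the pencil $A+b_\ell\otimes(\cdot)$, which is uniformly injective.

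A secondary issue concerns your boundary-strip interpolation. On the two sides of $\partial Q_\ell$ parallel to $a_\ell$ the sawtooth boundary datum oscillates with amplitude $O(|b_\ell|/n)$ but period $1/n$, so its tangential derivative is $O(|b_\ell|)$, not $O(|b_\ell|/n)$. Any affine interpolation matching it therefore has gradient of order $O(1)$ in the tangential direction, not $O(\ell|b_\ell|/n)$ as you claim. Consequently, the argument that in $Q\setminus Q_\ell$ the total gradient is a small perturbation of $A$ (and hence full rank for $n$ large) fails. This is another place where the paper's specific cutoff geometry, which keeps all gradients in the pencil $A+b_\ell\otimes(\cdot)$, is used essentially. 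In short, the missing idea is to force $b_\ell\notin\mathrm{Im}\,A$ and to arrange the cutoff so that every Clarke subgradient lies on a single rank-one line through $A$; once you have that, (ii) follows without needing any gradient smallness in the boundary strip.
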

	
	\begin{proof}
		By definition, for every $i \geq 0$ the function $\mathcal{R}_i W_0$ is $p$-coercive, thus, there exist $a \in \R^2$, $b \in \R^3$ and $\lambda \in [0,1]$ such that
		\begin{equation}\label{--}
		\mathcal{R}_{i+1}W_0(A)=(1-\lambda)\mathcal{R}_i W_0(A-\lambda b \otimes a)+\lambda \mathcal{R}_i W_0(A+(1-\lambda) b \otimes a).
		\end{equation}
		Without loss of generality we can suppose $a=(0,1)$. For every $n \geq 3$ and $k \in \{ 0,\dots,n-1 \}$, consider the following subsets of $Q$:
		\begin{align*}
			& \textstyle A_{k,n}^-:=\left\{ (x,y) \in Q: \ \frac{k}{n} \leq x \leq \frac{k}{n}+\frac{1-\lambda}{n} \mbox{ and } \frac{1}{n} \leq y \leq 1-\frac{1}{n} \right\}, \\
			& \textstyle A_{k,n}^+:=\left\{ (x,y) \in Q: \ \frac{k}{n}+\frac{1-\lambda}{n} \leq x \leq \frac{k+1}{n} \mbox{ and } \frac{1}{n} \leq y \leq 1-\frac{1}{n} \right\}, \\
			& \textstyle B_{k,n}:=\left\{ (x,y) \in Q: \ \frac{k}{n} \leq x \leq \frac{k+1}{n} \mbox{ and } 0 \leq y \leq -x+\frac{k+1}{n} \right\}, \\
			& \textstyle B_{k,n}^-:=\left\{ (x,y) \in Q: \ -y+\frac{k+1}{n} \leq x \leq -\lambda y+\frac{k+1}{n} \mbox{ and } 0 \leq y \leq \frac{1}{n} \right\}, \\
			& \textstyle B_{k,n}^+:=\left\{ (x,y) \in Q: \ -\lambda y+\frac{k+1}{n} \leq x \leq \frac{k+1}{n} \mbox{ and } 0 \leq y \leq \frac{1}{n} \right\}, 
		\end{align*}
		\begin{align*}
			& \textstyle C_{k,n}:=\left\{ (x,y) \in Q: \ \frac{k}{n} \leq x \leq \frac{k+1}{n} \mbox{ and } x+1-\frac{k+1}{n} \leq y \leq 1 \right\}, \\
			& \textstyle C_{k,n}^-:=\left\{ (x,y) \in Q: \ y-1+\frac{k+1}{n} \leq x \leq \lambda(y-1)+ \frac{k+1}{n} \mbox{ and } 0 \leq y \leq \frac{1}{n} \right\}, \\
			& \textstyle C_{k,n}^+:=\left\{ (x,y) \in Q: \ \lambda(y-1)+\frac{k+1}{n} \leq x \leq \frac{k+1}{n} \mbox{ and } 0 \leq y \leq \frac{1}{n} \right\}.
		\end{align*}
		We define the sequence $\sigma_n \in \Aff_c(Q,\R^3)$ as
		\begin{equation*}
			\sigma_n(x,y):=
			\begin{cases*}
				-\lambda(x-\frac{k}{n}) & if $(x,y) \in A_{k,n}^-$, \\
				(1-\lambda)(x-\frac{k+1}{n}) & if $(x,y) \in A_{k,n}^+ \cup B^+_{k,n} \cup C^+_{k,n}$, \\
				-\lambda(x+y-\frac{k+1}{n}) & if $(x,y) \in B_{k,n}^-$, \\
				-\lambda(x-y+1-\frac{k+1}{n}) & if $(x,y) \in C_{k,n}^-$, \\
				0 & if $(x,y) \in B_{k,n} \cup C_{k,n}$.
			\end{cases*}
		\end{equation*}
		Let $\nu$ be a normal vector to the space $\textnormal{Im} \ A:=\{ Ax : \ x \in \R^2 \}$ and $\ell \geq 1$, set
		\begin{equation*}
			b_\ell:=
			\begin{cases*}
				b & if $b \notin \textnormal{Im} \ A$, \\
				b+\frac{1}{\ell} \nu & if $b \in \textnormal{Im} \ A$.
			\end{cases*}
		\end{equation*}  
		We are finally ready to define the sequence $\theta_{n,\ell}$ as $\theta_{n,\ell}:=\sigma_n b_\ell$.
		
		(i) Observe that the following estimate holds for every $n \geq 3$
		$$
		\int_Q |\sigma_n(x)|^p \,dx \leq \frac{\lambda^p(1-\lambda)^p}{n^p};
		$$
		therefore, $\theta_{n,\ell} \to 0$ in $L^p$ as $n \to +\infty$.
		
		(ii) Since we have 
		\begin{equation*}
			\nabla \theta_{n,\ell}(x,y)=
			\begin{cases*}
				-\lambda b_\ell \otimes a & if $(x,y) \in A_{k,n}^-$, \\
				(1-\lambda)b_\ell \otimes a & if $(x,y) \in A_{k,n}^+ \cup B^+_{k,n} \cup C^+_{k,n}$, \\
				-\lambda b_\ell \otimes (a+a^\perp) & if $(x,y) \in B_{k,n}^-$, \\
				-\lambda b_\ell \otimes (a-a^\perp) & if $(x,y) \in C_{k,n}^-$, \\
				0 & if $(x,y) \in B_{k,n} \cup C_{k,n}$;
			\end{cases*}
		\end{equation*}
		the admissible gradients of the affine function $x \mapsto Ax+\theta_{n,\ell}(x)$ belong to a finite set of the form
			$$
			\{ A+b_\ell \otimes d_i : \ i=1,\dots,5 \}.
			$$
		The convex hull of the above set of gradients is clearly made of injective matrices because $\textnormal{rank}(A)=2$ and $b_\ell \notin \textnormal{Im} \ A$ for every $\ell \geq 1$. Thus $x \mapsto Ax+\theta_{n,\ell}(x) \in \Aff^*(Q, \R^3)$.
		
		(iii) After a direct computation for every $n \geq 3$ and $\ell \geq 1$, we get
		\begin{align*}
			\int_Q \mathcal{R}_i W_0(A+ \nabla \theta_{n,\ell}(x)) \, dx = & \left( 1-\frac{2}{n} \right) \left[ \vphantom{\frac{1}{1}} (1-\lambda)\mathcal{R}_i W_0(A-\lambda b_\ell \otimes a)  +\lambda\mathcal{R}_i W_0(A+ \right. \\ & \left. (1-\lambda) b_\ell \otimes a) \vphantom{\frac{1}{1}} \right]+\frac{1}{n} \left[ \lambda\mathcal{R}_i W_0(A+(1-\lambda) b_\ell \otimes a)+ \vphantom{\frac{1}{1}} \right. \\ & \left. \frac{1-\lambda}{2}(\mathcal{R}_i W_0(A  -\lambda b_\ell \otimes (a+a^\perp))+ \right. \\ & \left. \mathcal{R}_i W_0(A-\lambda b_\ell \otimes (a-a^\perp))) \vphantom{\frac{1}{1}} +\mathcal{R}_i W_0(A) \right].
		\end{align*}
		It follows that for every $\ell \geq 1$,
		\begin{align*}
			\lim_{n \to +\infty} \int_Q \mathcal{R}_i W_0(A+\nabla \theta_{n,\ell}) \, dx=(1-\lambda)\mathcal{R}_i W_0(A-\lambda b_\ell \otimes a)+\lambda \mathcal{R}_i W_0(A+(1-\lambda) b_\ell \otimes a).
		\end{align*}
		Recalling that $\mathcal{R}_i W_0$ is upper semi-continuous and \eqref{--}, we conclude.
	\end{proof}

	\begin{proof}[Proof of Lemma \ref{BB lemma}.]
		Since $|\partial V| =0$, there exists a sequence of plurirectangles $V_q$ compactly contained in $V$ and such that $|V \setminus V_q|\to 0$ as $q \to +\infty$. Fix $q \geq 1$. We have that $V_q=\cup_{m=1}^{M_q} (r_m+\rho_m Q)$, where $M_q \in \N$, $\rho_m >0$ and $r_m \in \R^2$ for every $m=1,\dots,M_q$. We define
		\begin{equation*}
			\phi_{k,l,q}:=
			\begin{cases*}
				\rho_m \theta_{n,l}\left(\frac{x-r_m}{\rho_m}\right) & if $x \in r_m+\rho_m Q \subset V_q$, \\
				0 & if $x \in V \setminus V_q$;
			\end{cases*}
		\end{equation*}
		where $\theta_{n,l}$ is the sequence given by Lemma \ref{3000}. We notice that since $$\nabla \phi_{n,l,q}(x)=\nabla \theta_{n,l}\left(\frac{x-r_m}{\rho_m}\right)$$ for every $x \in r_m+\rho_m Q$, (ii) follows by (ii) of Lemma \ref{3000}. Moreover, also (i) follows directly from (i) of Lemma \ref{3000}.
		
		We are left to show (iii). Recalling that $\sum_{m=1}^{M_q} \rho_m^2=|V_q|$, we compute by rescaling
		\begin{align*}
			\int_V \mathcal{R}_i W_0(A+\nabla \phi_{n,l,q}(x)) \, dx &=\int_{V_q} \mathcal{R}_i W_0(A+\nabla \phi_{n,l,q}(x)) \,  dx+|V \setminus V_q|\mathcal{R}_i W_0(A) \\ & = |V_q|\int_Q \mathcal{R}_i W_0(A+\nabla \theta_{n,l}(x)) \,  dx+|V \setminus V_q|\mathcal{R}_i W_0(A).
		\end{align*} 
		Using Lemma \ref{3000} (iii), we deduce that
		$$
		\lim_{l \to +\infty} \lim_{n \to +\infty} \int_V \mathcal{R}_i W_0(A+\nabla \phi_{n,l,q}(x)) \, dx \leq |V_q|\mathcal{R}_{i+1}W_0(A)+|V \setminus V_q|\mathcal{R}_i W_0(A),
		$$
		and (iii) follows by letting $q \to +\infty$.
%
	\end{proof}
	
\section*{Acknowledgments} \noindent The authors wish to thank N. Fusco for a very useful discussion.

\noindent The work of S. Almi was partially funded by the FWF Austrian Science Fund through the Project ESP-61

\noindent The work of F. Solombrino is part of the MIUR - PRIN 2017, project Variational Methods for Stationary and Evolution Problems with Singularities and Interfaces, No. 2017BTM7SN.  He also acknowledges support by project Starplus 2020 Unina Linea 1 "New challenges in the variational modeling of continuum mechanics" from the University of Naples Federico II and Compagnia di San Paolo, and by Gruppo Nazionale per l’Analisi Matematica, la Probabilit\`a e le loro Applicazioni (GNAMPA-INdAM).

	\bibliographystyle{siam}
	\bibliography{bibliography_NEW}
	
\end{document}